\newcommand{\properideal}{%
\mathrel{\ooalign{$\lneq$\cr\raise.22ex\hbox{$\lhd$}\cr}}}
\newcommand{\properring}{%
\mathrel{\ooalign{$\gneq$\cr\raise.22ex\hbox{$\rhd$}\cr}}}
\newcommand{\Z}{{\mathbb Z}}
\theoremstyle{definition}
\numberwithin{equation}{subsubsection}
\newtheorem{thm}[subsubsection]{{\bf Theorem}}
\newtheorem{cor}[subsubsection]{{\bf Corollary}}
\newtheorem{lem}[subsubsection]{{\bf Lemma}}
\newtheorem{prop}[subsubsection]{{\bf Proposition}}
\newtheorem{defn}[subsubsection]{{\bf Definition}}
\newtheorem{ex}[subsubsection]{{\bf Example}}
\newtheorem{rem}[subsubsection]{{\bf Remark}}
\title[Cluster Magnification, Root Capacity, Unique Chains, Base Change and Ascending Index]{Cluster Magnification, Root Capacity, Unique Chains, Base Change and Ascending Index}
\author{Chandrasheel Bhagwat, Shubham Jaiswal}
\address{Indian Institute of Science Education and Research, Dr.\,Homi Bhabha Road, Pashan, Pune 411008,  INDIA.}
\email{cbhagwat@iiserpune.ac.in, \ jaiswal.shubham@students.iiserpune.ac.in}
\subjclass[2020]{11R32, 12F05, 12F10}
\date{\today}
\begin{document}

\begin{abstract}
 This article is inspired from the work of M Krithika and P Vanchinathan in \cite{krithika2023root} 
 and the work of Alexander Perlis in \cite{perlisroots} and \cite{perlis2004roots}. 
 We establish the existence of polynomials for given degree and cluster size over number fields which generalises a result of Perlis. We state the Strong cluster magnification problem and establish an equivalent criterion for that. 
 We also discuss the notion of weak cluster magnification and prove some properties. 
 We provide an important example answering a question about Cluster Towers.
  We introduce the concept of Root capacity and prove some of its properties.
   We also introduce the concept of unique descending and ascending chains for extensions and establish some properties and explicitly compute some interesting examples. We establish results about all these phenomena under a particular type of base change and discuss some other related results about strong cluster magnification and unique chains.
   The article concludes with results about ascending index for a field extension which are analogous to results about cluster size.
    
\end{abstract}

\maketitle

{\bf Keywords:} Galois theory, Root clusters, Base change, Ascending Index

\section{Introduction}
\label{intro}

For an irreducible polynomial over a perfect field, we have the notion of root clusters with combinatorial relation between degree of the polynomial, cluster size and number of clusters. Perlis has proved in \cite{perlisroots} and \cite{perlis2004roots} that the cluster size is independent of choice of root of the irreducible polynomial and that the product of cluster size and number of clusters equals the degree of the polynomial. Perlis has also described the cluster size in terms of the Galois groups associated to the polynomial.
All these notions carry forward to field extensions over the perfect field.\smallskip

Krithika and Vanchinathan proved the Cluster Magnification theorem in \cite{krithika2023root} for irreducible polynomials in which they establish sufficient conditions to magnify the cluster size of an irreducible polynomial. They reformulate the theorem for field extensions as well. \smallskip

In Sec.\ \ref{prelim}, we set up some notations and review the results by Perlis in \cite{perlisroots}, \cite{perlis2004roots} and Krithika-Vanchinathan in \cite{krithika2023root}.\smallskip

In Sec.\ \ref{root cluster size}, we generalise a result of Perlis for number fields that also improves on the generalisation proved previously by Krithika and Vanchinathan in \cite{krithika2023root}. 
The generalisation Thm.\ \ref{n,r}
is as follows.\smallskip

\begin{thm} (Inverse Cluster Size Problem for Number Fields) {\it Let $K$ be a number field. Let $n>2$ and $r|n$. Then there exists an irreducible polynomial over $K$ of degree $n$ with cluster size $r$. }
    
\end{thm}

In the same section, we also present a simple lemma about number of clusters, Lemma \ref{L isom}, which is very useful in giving alternate proofs of results by Perlis and Krithika-Vanchinathan as well as in proving further results.\smallskip

In Sec.\ \ref{Cluster Magnification}, we state the Strong cluster magnification problem and establish the following equivalent criterion for that in Thm.\ \ref{criterion} in terms of Galois groups. For all notations, see Sec.\ \ref{Cluster Magnification}.

\begin{thm} \label{criterion in introduction}
  {\it  An extension $M/K$ is obtained by nontrivial strong cluster magnification from some subextension $L/K$
    if and only if ${{\rm Gal}}(\tilde{M}/K)\cong A\times B$ for nontrivial groups $A$ and $B$ and ${{\rm Gal}}(\tilde{M}/M)\cong A'\times 1$ (under the same isomorphism) for a subgroup $A'\subset A$ with $[A:A']>2$. }
\end{thm}


 We also reformulate the Strong cluster magnification problem for irreducible polynomials. We then state the Weak cluster magnification problem and demonstrate how the notions for strong cluster magnification and weak cluster magnification are actually different.\smallskip

In Sec.\ \ref{cluster towers}, we provide an important example, Example \ref{cluster tower example}, answering a question in \cite{krithika2023root} about Cluster Towers. We also give a group theoretic formulation for cluster towers.\smallskip

In Sec.\ \ref{root capacity}, we introduce the concept of Root capacity. We begin the section with some observations about the group of automorphisms of finite extensions in Prop. \ref{r divide} and Prop. \ref{N Aut} and its corollaries. Then we prove some properties of root capacity in Propositions \ref{elegant}, \ref{rho properties}, \ref{a}, \ref{rho compositum} and \ref{L_M}. We conclude the section with Thm. \ref{hint} 
which is as follows. For notations, see Sec.\ \ref{prelim} and Sec.\ \ref{root capacity}.

\begin{thm} {\it Consider extensions $M/L/K$. If $M\cap \tilde{L}=L$ and $[M:L]=r_K(M)/r_K(L)$, then $M/L$ is Galois. }
    \end{thm}

In Sec.\ \ref{unique chains}, we introduce the concept of unique descending and ascending chains for extensions. Thm.\ \ref{unique descending chain} and Thm.\ \ref{unique ascending chain} encapsulate the important properties of unique chains.  Prop.\ \ref{link unique} links unique ascending and descending chain under certain conditions. We prove an interesting result Prop.\ \ref{symmetric sum} that describes the field $N_1$ in unique descending chain in terms of the sums of elementary symmetric functions.
 In Sec.\ \ref{Interesting Examples}, we compute unique ascending/ descending chains for some interesting examples (see Prop.\ \ref{nth root}, Thm.\ \ref{Perlis unique} and Thm.\ \ref{unique chain Lk}).\smallskip

In Sec.\ \ref{base change}, we define notion of a particular type of base change and then establish 
results Thm.\ \ref{basechange}, \ref{basechangeweak}, \ref{base change root capacity} and \ref{base change unique chain}  about strong/weak cluster magnification, root capacity and unique chains, respectively under this base change. We prove results Thm.\ \ref{strong unique descending} and 
Thm.\ \ref{strong unique ascending} about strong cluster magnification and unique chains.

\smallskip
The article concludes with Sec.\ \ref{Properties of tKL} with some properties of the ascending index
 $t_K(L)$ defined in Thm.\ \ref{unique ascending chain} in the context of unique ascending chain for an extension $L/K$. The ascending index has many properties similar to the cluster size $r_K(L)$ but has no immediate description in terms of roots of the minimal polynomial of $\alpha$ over $K$ when $L = K(\alpha)$. In Sec.\ \ref{Properties of tKL},   
 Prop.\ \ref{basechange-tKL} establishes a base change property for $t_K(L)$. Thm.\ \ref{t strong} establishes an analogue of Cluster Magnification Theorem
 (Thm.\ \ref{Vanchinathan}) for $t_K(L)$ which is as follows. For notations, see Sec.\ \ref{unique chains}.
 
\medskip

 \begin{thm} (Ascending Index Magnification Theorem) {\it Let $M/K$ be obtained by strong cluster magnification with magnification factor $d$. Then 
     $$t_K(M)=d \ t_K(L)\  \text{and}\ u_K(M) = u_K(L).$$}

 \end{thm}

 Finally, Thm.\ \ref{t,r} is an analogue of Thm.\ \ref{n,r} and is as follows.\smallskip

 \begin{thm} (Inverse Ascending Index Problem for Number Fields) {\it Let $K$ be a number field. Let $n>2$ and $t|n$. Then there exists an extension $L/K$ of degree $n$ with ascending index $t_K(L)=t$. }
 \end{thm}

\smallskip

\section{Preliminaries} 
\label{prelim}

\subsection{Root clusters}\hfill

 Let $K$ be a perfect field. We fix an algebraic closure $\bar{K}$ once and for all and work with finite extensions of $K$ contained in $\bar K$.\smallskip

Let $f \in K[t]$ be an irreducible polynomial and let $\alpha$ be a root of $f$ in $\bar{K}$. Since $K$ is perfect, it follows that $f$ has ${\rm deg}(f)$ distinct roots in $\bar{K}$. The cluster of $\alpha$ is defined as the set of roots of $f$ in the field $K(\alpha)$ and its cardinality $r_K(f)$ is called the cluster size of $\alpha$ over $K$.\smallskip

Let $K_f$ be the splitting field of $f$ over $K$ and let $G: = {\rm {{\rm Gal}}}(K_f/K)$. Let $H = {{\rm Gal}}(K_f/K(\alpha))$ be the subgroup of $G$ such that $K(\alpha)$ is the fixed field of $H$.\smallskip

Let $s_K(f)$ be the number of distinct fields of the form $K(\alpha_j)$, with $\alpha_j$ a root of $f$ in $K_f$ for all $1 \leq j \leq {\rm deg}(f)$. 
The following result is proved in \cite{perlisroots} and \cite{perlis2004roots}.\smallskip

\begin{thm}
\label{Perlis}
    (Perlis)
{\it 
\smallskip
     \begin{enumerate}
        \item \it $r_K(f)$ is independent of the choice of $\alpha$.
        
    \smallskip    \item $r_K(f) \ s_K(f) = {\rm deg}(f)$. In particular, $r_K(f) ~| ~{\rm deg}(f)$.
    
 \smallskip       \item $r_K(f)=$ number of roots of $f$ fixed by $H=|{\rm Aut}(K(\alpha)/K)|=[N_G(H): H]$. 
    \end{enumerate}
}
\end{thm}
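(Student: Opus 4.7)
The plan is to use the Galois correspondence to encode every quantity in the pair $(G,H)$, after which all three statements fall out from a single calculation identifying the cluster with a coset space.

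I would begin by identifying the set of roots of $f$ in $\bar K$ with $G/H$ via the bijection $gH \mapsto g(\alpha)$. This is well-defined and $G$-equivariant because $G$ acts transitively on the roots (as $f$ is irreducible) and $\mathrm{Stab}_G(\alpha) = H$; in particular $\deg(f) = [G:H]$. The key step is then to determine which roots lie in the cluster $K(\alpha) = K_f^{H}$: a root $g(\alpha)$ is fixed by every $h \in H$ if and only if $g^{-1} H g \subseteq H$, and by finiteness of $H$ this is equivalent to $g \in N_G(H)$. Hence the cluster of $\alpha$ is in bijection with $N_G(H)/H$, which gives
\[
r_K(f) = [N_G(H):H].
\]

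With this identification in hand, the three parts follow immediately. For (1), a different root $\alpha' = g_0(\alpha)$ corresponds to the subgroup $g_0 H g_0^{-1}$, whose normaliser is $g_0 N_G(H) g_0^{-1}$, so the index computing the cluster size is unchanged. For (2), the distinct fields $K(\alpha_j)$ correspond bijectively to the distinct conjugates of $H$ in $G$, which number $[G:N_G(H)]$, so $s_K(f) = [G:N_G(H)]$ and the tower identity for indices yields
\[
r_K(f)\, s_K(f) \;=\; [N_G(H):H]\cdot [G:N_G(H)] \;=\; [G:H] \;=\; \deg(f).
\]
For (3), the roots of $f$ fixed by $H$ are by definition the cluster of $\alpha$, so their count equals $r_K(f)$. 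The further identification with $|\mathrm{Aut}(K(\alpha)/K)|$ is the standard restriction map $N_G(H)/H \xrightarrow{\ \sim\ } \mathrm{Aut}(K(\alpha)/K)$: elements of $N_G(H)$ preserve $K(\alpha)$ set-wise and so restrict to $K$-automorphisms of $K(\alpha)$, the kernel of restriction is exactly $H$, and every $K$-automorphism of $K(\alpha)$ extends to $K_f$ by normality of $K_f/K$.

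There is no serious obstacle. The only step that warrants a word of care is the implication $g^{-1} H g \subseteq H \Rightarrow g \in N_G(H)$, which uses finiteness of $H$ to promote the containment to equality; everything else is a direct application of the Galois correspondence.
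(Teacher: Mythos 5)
Your proposal is correct, and it is worth noting that it is more self-contained than what the paper does: the paper simply cites Perlis for Theorem \ref{Perlis} and only supplies (in Sec.\ \ref{alternate proofs}) an alternate proof of the final equality $r_K(f)=[N_G(H):H]$, by observing that the fields $K(\alpha')$ are exactly the subfields of $K_f$ that are $K$-isomorphic to $K(\alpha)$, hence correspond to the conjugates of $H$, so $s_K(f)=[G:N_G(H)]$, and then invoking part (2). Your route inverts this logic: you identify the roots with $G/H$, characterize the cluster of $\alpha$ as the orbit of $N_G(H)$ (using finiteness of $H$ to pass from $g^{-1}Hg\subseteq H$ to $g\in N_G(H)$ — correctly flagged), and thereby compute $r_K(f)=[N_G(H):H]$ directly; part (1) then follows from conjugation-invariance of this index, and part (2) from $[N_G(H):H]\,[G:N_G(H)]=[G:H]=\deg f$ together with the same conjugate-subgroup count of $s_K(f)$ that the paper uses. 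Your identification $N_G(H)/H\cong \mathrm{Aut}(K(\alpha)/K)$ via restriction (kernel $H$, surjectivity from extending automorphisms to the normal closure $K_f$) is exactly the content of the paper's Cor.\ \ref{perlis first prop} and of Perlis's original argument mimicked in Prop.\ \ref{N Aut}. What your approach buys is a single uniform derivation of all three statements from the coset picture, independent of the order (2)-then-(3) used in the paper's partial alternate proof; what the paper's phrasing buys is a quick proof of the $[N_G(H):H]$ formula once (1)--(2) are taken as known from Perlis. All steps in your argument check out.
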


By the proof of above theorem it follows that $s_K(f)$ is the number of clusters of roots of $f$ in $K_f$.\smallskip

Let $L/K$ be a finite extension of degree $n$ contained in $\bar{K}$ and $\tilde{L}$ be its Galois closure inside $\bar{K}$. Since $K$ is perfect, by primitive element theorem,  $L=K(\alpha)$ with $f$ over $K$ a degree $n$ irreducible polynomial with $\alpha$ as a root in $\bar{K}$. The cluster size of $L/K$ is defined as $r_K(L):=r_K(f)$ which is well defined because of part (3) of  Thm.\ \ref{Perlis} (Corollary 1 in \cite{krithika2023root}). Similarly one can define $s_K(L):=s_K(f)$. Thus we have 
\[ r_K(L) \ s_K(L) =  [L : K]. \]

\begin{rem}
    The cluster size is preserved under isomorphism over $K$. If $M/K$ and $M'/K$ contained in $\bar{K}$ are isomorphic over $K$, then $r_K(M)=r_K(M')$.
\end{rem}

\subsection{Cluster Magnification Theorem} \hfill

We note the following basic fact from Galois theory.

 \begin{rem}
 \label{morandi}
 
     Let $E_1$ and $E_2$ be finite extensions over $K$ and suppose one of them is Galois. Then $E_1$ and $E_2$ are linearly disjoint over $F$ $\iff$ $E_1\cap E_2= K$ (see \cite[Def 20.1 and Example 20.6]{morandi2012field}).
 \end{rem}

The following lemma can be deduced from \cite[Lem.\ 1, Chap.\ 8.15]{jacobsonbasicalgebra2} in combination with Remark\ \ref{morandi}.

\begin{lem}
\label{jacob} 
{\it 
Let $E/K$ be any extension and $F/K$ be Galois extension and let $E'\subset E$. Then $$E\cap F=K\iff E\cap E'F=E'\ \text{and}\  E'\cap F= K.$$
}
\end{lem}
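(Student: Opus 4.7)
The plan is to reduce the lemma to the transitivity of linear disjointness (Jacobson's Lemma 1, Chap.\ 8.15) by rewriting each of the three intersection equalities as a linear disjointness statement via Remark \ref{morandi}. The point is that the lemma, as stated, is almost literally the Galois-theoretic reformulation of that transitivity property, so the work consists of checking that each piece of the dictionary between intersections and linear disjointness is actually applicable.

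First I would observe that $E'F/E'$ is Galois, because $F/K$ is Galois and the composite of a Galois extension with any other extension is Galois over the other factor. This is the one preliminary verification needed so that Remark \ref{morandi} applies to all three of the intersections appearing in the statement. Next I would translate: $E\cap F=K$ becomes ``$E$ and $F$ are linearly disjoint over $K$'' (since $F/K$ is Galois); $E'\cap F=K$ becomes ``$E'$ and $F$ are linearly disjoint over $K$'' (same reason); and $E\cap E'F=E'$ becomes ``$E$ and $E'F$ are linearly disjoint over $E'$'' (since $E'F/E'$ is Galois).

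With these translations in hand, the equivalence to be proved is exactly the content of the transitivity of linear disjointness applied to the tower $K\subset E'\subset E$ together with the field $F$: the pair $(E,F)$ is linearly disjoint over $K$ if and only if the pair $(E',F)$ is linearly disjoint over $K$ and the pair $(E,E'F)$ is linearly disjoint over $E'$. Invoking Jacobson's lemma and translating back yields the stated biconditional.

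The main obstacle, such as it is, is purely organisational: one must make sure that Remark \ref{morandi} is invoked only in cases where the relevant extension on one side is Galois, which amounts to checking the one non-immediate fact that $E'F/E'$ is Galois. No further ingredient is needed. If one preferred a self-contained argument avoiding the transitivity statement, one could instead compare degrees, using $[EF:E]=[F:K]$, $[E'F:E']=[F:K]$ (which follows once $E'\cap F=K$ is established), and the standard inequality $[E\!\cdot\! E'F:E'F]\le [E:E\cap E'F]$, combined with the obvious containment $E'\subset E\cap E'F$, to force equality.
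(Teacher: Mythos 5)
Your proposal is correct and is essentially the paper's own argument: the paper proves the lemma precisely by combining Jacobson's transitivity-of-linear-disjointness lemma (Lem.\ 1, Chap.\ 8.15) with the intersection criterion of Remark \ref{morandi}, exactly as you do, including the needed observation that $E'F/E'$ is Galois.
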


The following result proved in \cite[Sec.\ 3.1]{krithika2023root} is referred to as the Cluster Magnification theorem. The theorem is reformulated in \cite[Sec.\ 4]{krithika2023root}.
\smallskip 

\begin{thm}
\label{Vanchinathan}
(Krithika, Vanchinathan) 
{\it Let $K,f$ and $\alpha$ be as above. Let ${\rm deg}(f)=n > 2$ over $K$ with cluster size $r_K(f)=r$. Assume that there is a Galois extension $F$ of $K$, say of degree $d$, which is linearly disjoint with $K_f$ over $K$. Then there exists an irreducible polynomial $g$ over $K$ of degree $nd$ with cluster size $rd$. ($F$ can be chosen to be $K(\beta)$ for some $\beta$ in $\bar{K}$ so that $K(\alpha,\beta)=K(\alpha + \beta)$ and the minimal polynomial of $\alpha +\beta$ over $K$, has degree $nd$ with cluster size $rd$; $d$ is the magnification factor).\smallskip

    Reformulation : Let $K$ be a perfect field as above and $L/K$ be an extension of degree $n>2$ contained in $\bar{K}$ with cluster size $r_K(L)= r$. Let $F/K$ be any finite Galois extension of degree $d$ contained in $\bar{K}$, which is linearly disjoint with $\tilde{L}$ over $K$. Then the compositum $LF/K$ has degree $nd$ with cluster size $r_K(LF)=rd$ ($d$ is the magnification factor).
   }
\end{thm}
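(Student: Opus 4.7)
My plan is to prove the reformulated version for field extensions by reducing everything to Perlis' Galois-theoretic characterisation $r_K(L)=[N_G(H):H]=|{\rm Aut}(L/K)|$ from Thm.\ \ref{Perlis}(3), and then deducing the polynomial version from it via a primitive element.

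First I would establish the structural picture. Since $F/K$ is Galois and linearly disjoint from $\tilde{L}$ over $K$, the restriction maps give an isomorphism
\[ {\rm Gal}(\tilde{L}F/K)\ \xrightarrow{\ \sim\ }\ {\rm Gal}(\tilde{L}/K)\times {\rm Gal}(F/K), \qquad \sigma\mapsto (\sigma|_{\tilde{L}},\sigma|_F). \]
In particular $[\tilde{L}F:K]=[\tilde{L}:K]\cdot d$, and restricting to $L\subseteq\tilde{L}$ gives $[LF:K]=nd$ (so linear disjointness of $L$ and $F$ over $K$ follows automatically from that of $\tilde{L}$ and $F$). Because $F/K$ is already Galois, $\tilde{L}F$ is Galois over $K$ and is visibly the smallest Galois extension containing $LF$; that is, $\widetilde{LF}=\tilde{L}F$.

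Next I would identify the subgroup corresponding to $LF$. Writing $H:={\rm Gal}(\tilde{L}/L)$, an element $(\sigma,\tau)$ of the product fixes $LF$ if and only if it fixes both $L$ and $F$, i.e.\ $\sigma\in H$ and $\tau=1$. Hence under the isomorphism above,
\[ {\rm Gal}(\tilde{L}F/LF)\ \cong\ H\times\{1\}. \]
Set $\mathcal{G}:={\rm Gal}(\tilde{L}/K)$ and $\mathcal{F}:={\rm Gal}(F/K)$. A direct computation in a direct product yields $N_{\mathcal{G}\times\mathcal{F}}(H\times\{1\})=N_{\mathcal{G}}(H)\times\mathcal{F}$, so the index satisfies
\[ [\,N_{\mathcal{G}\times\mathcal{F}}(H\times\{1\})\,:\,H\times\{1\}\,]\ =\ [N_{\mathcal{G}}(H):H]\cdot|\mathcal{F}|. \]
Applying Perlis' formula (Thm.\ \ref{Perlis}(3)) both to $L/K$ and to $LF/K$ converts the left side into $r_K(LF)$ and the right side into $r_K(L)\cdot d=rd$, giving the desired cluster size equality.

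For the polynomial reformulation, I would invoke the primitive element theorem: since $K$ is perfect, $LF=K(\alpha+c\beta)$ for some $c\in K$ (in fact for all but finitely many $c$), where $F=K(\beta)$; the minimal polynomial $g$ of $\alpha+c\beta$ over $K$ then has degree $nd$ and $r_K(g)=r_K(LF)=rd$ by the preceding paragraph and the remark that cluster size depends only on the extension. The single hypothesis $n>2$ plays no role in the argument for the cluster size itself; I would simply carry it through as stated. The only place requiring real care is the identification of the subgroup fixing $LF$ inside the product $\mathcal{G}\times\mathcal{F}$ together with the normaliser computation, and this is exactly where linear disjointness of $\tilde{L}$ (not merely $L$) with $F$ is essential, since otherwise the product decomposition of ${\rm Gal}(\tilde{L}F/K)$ fails and the clean normaliser identity collapses.
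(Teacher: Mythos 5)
Your proof is correct, but it follows a genuinely different route from the one this paper gives. The paper's own argument (in Sec.\ \ref{base change}) never touches normalisers: it shows $[LF:K]=nd$ via Lemma \ref{jacob}, then proves that every field $K$-isomorphic to $LF$ is of the form $L'F$ for a unique $L'$ that is $K$-isomorphic to $L$ (Lemma \ref{isomorphism} and Corollary \ref{isom K}), so that by Lemma \ref{L isom} one gets $s_K(LF)=s_K(L)$, and the cluster size $r_K(LF)=rd$ then drops out of the relation $r\cdot s=\deg$ from Thm.\ \ref{Perlis}(2). You instead work entirely on the Galois side: you establish $\widetilde{LF}=\tilde{L}F$, the decomposition ${\rm Gal}(\tilde{L}F/K)\cong \mathcal{G}\times\mathcal{F}$, the identification ${\rm Gal}(\tilde{L}F/LF)\cong H\times\{1\}$, and the normaliser identity $N_{\mathcal{G}\times\mathcal{F}}(H\times\{1\})=N_{\mathcal{G}}(H)\times\mathcal{F}$, then invoke Perlis' formula $r=[N:H]$ from Thm.\ \ref{Perlis}(3). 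All of these steps check out (including the observation that $\widetilde{LF}\supseteq\tilde{L}$ forces equality, and that linear disjointness of $\tilde{L}$ — not merely $L$ — with $F$ is what makes the product decomposition work), and your passage to the polynomial statement via a primitive element $\alpha+c\beta$ is a harmless variant of the stated $\alpha+\beta$. What each approach buys: your route is self-contained and simultaneously re-derives, independently of Thm.\ \ref{Vanchinathan}, the structural facts that the paper only records afterwards in Prop.\ \ref{prop}(2)--(4) (whose part (1) the paper proves by citing Thm.\ \ref{Vanchinathan}), so it would also give an independent proof of Prop.\ \ref{prop}(1); the paper's route avoids any normaliser computation and is designed to showcase the counting lemma \ref{L isom} ($s_K(L)$ as the number of $K$-isomorphic copies of $L$ in $\bar{K}$), which it then reuses elsewhere.
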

\medskip

\section{Root Cluster Size}\label{root cluster size}

\subsection{Existence of polynomials for given degree and cluster size over number fields}
\hfill
\smallskip

 We present the following theorem which is generalisation of a result in an unpublished note of Perlis \cite[Exercise 4]{perlisroots}, which was for $K=\mathbb{Q}$. This theorem includes even the excluded cases in generalisation done in \cite[Thm. 2]{krithika2023root} namely $n=2r$ where $r$ is odd for $\mathbb{Q}$ and $n=2r$ for any number field $K\neq \mathbb{Q}$.\smallskip

 \begin{thm}
\label{n,r} 
   (Inverse Cluster Size Problem for Number Fields) 
   {\it Let $K$ be a number field. Let $n>2$ and $r|n$. Then there exists an irreducible polynomial over $K$ of degree $n$ with cluster size $r$. }
\end{thm}

\smallskip
Before proving Thm.\ \ref{n,r}, we state some results which we will use in the proof of the theorem.

\medskip

\begin{lem}
\label{sn}
 {\it    The group ${\mathfrak S}_n$ is realisable as Galois group over any number field.}
\end{lem}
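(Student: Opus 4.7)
The plan is to prove this via Hilbert's irreducibility theorem applied to the generic polynomial of degree $n$. Recall that every number field $K$ is a Hilbertian field (by a classical theorem of Hilbert, extended to number fields; see, e.g., the standard reference Fried--Jarden, \emph{Field Arithmetic}), meaning that for any irreducible polynomial $F(T, X_1, \ldots, X_m) \in K(X_1, \ldots, X_m)[T]$ there is a Zariski-dense set of specializations $(X_1, \ldots, X_m) \mapsto (a_1, \ldots, a_m) \in K^m$ for which the resulting polynomial in $K[T]$ remains irreducible (and in fact preserves the Galois group).

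Concretely, I would consider the generic polynomial
\[
F(T, X_1, \ldots, X_n) \;=\; T^n + X_1 T^{n-1} + X_2 T^{n-2} + \cdots + X_n
\]
over the purely transcendental extension $K(X_1, \ldots, X_n)$, where $X_1, \ldots, X_n$ are algebraically independent indeterminates. It is a standard fact that the Galois group of $F$ over $K(X_1, \ldots, X_n)$ is the full symmetric group $\mathfrak{S}_n$: if $\alpha_1, \ldots, \alpha_n$ denote the roots in an algebraic closure, then the map $\alpha_i \mapsto \alpha_i$ identifies $K(X_1,\ldots,X_n)$ with the fixed field $K(\alpha_1,\ldots,\alpha_n)^{\mathfrak{S}_n}$ via the elementary symmetric polynomials (up to sign), so $\mathrm{Gal}(K(\alpha_1,\ldots,\alpha_n)/K(X_1,\ldots,X_n)) \cong \mathfrak{S}_n$.

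Applying the Hilbert irreducibility property of $K$, there exists a specialization $(X_1, \ldots, X_n) \mapsto (a_1, \ldots, a_n) \in K^n$ such that the specialized polynomial $f(T) = T^n + a_1 T^{n-1} + \cdots + a_n \in K[T]$ is irreducible over $K$ and has Galois group equal to $\mathfrak{S}_n$ over $K$. This exhibits $\mathfrak{S}_n$ as a Galois group over $K$, completing the proof.

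The only nontrivial input is the Hilbertian property of number fields, which is a deep but classical theorem and can simply be invoked; the remainder (computing the generic Galois group and specializing) is routine. Thus I do not anticipate any real obstacle beyond citing the appropriate version of Hilbert's irreducibility theorem in the number field setting.
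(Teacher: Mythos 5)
Your proposal is correct and follows essentially the same route as the paper: both rest on the Hilbertian property of number fields together with the realization of $\mathfrak{S}_n$ over the rational function field $K(X_1,\ldots,X_n)$ (the paper phrases this as ``$\mathfrak{S}_n$ occurs regularly over $K$'' and cites the specialization result from V{\"o}lklein, which is exactly the Galois-group-preserving form of Hilbert irreducibility you invoke). The only difference is that you unpack the regular realization explicitly via the generic polynomial, which is fine.
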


\begin{proof}
The reader is referred to V{\"o}lklein \cite{volklein1996groups} for details. The field $\mathbb{Q}$ is hilbertian (see \cite[Def.\ 1.9 and Theorem 1.23]{volklein1996groups}). Furthermore, every finitely generated extension of a hilbertian field is hilbertian (see \cite[Corollary 1.11 ]{volklein1996groups}),
and thus we conclude that every number field  is hilbertian.

\smallskip Let $K$ be a number field. We know that the group ${\mathfrak S}_n$ occurs regularly over every
 field (\cite[Def.\ 1.14 and Example 1.17]{volklein1996groups}). In particular, ${\mathfrak S}_n$ occurs regularly over $K$. 
  The result \cite[Corollary 1.15]{volklein1996groups} says that if a group occurs regularly over a hilbertian field,
   then it is realisable as a Galois group over that field. 
   Hence finally we conclude that ${\mathfrak S}_n$ is realisable as Galois group over $K$.
\end{proof}

\smallskip

The following lemma is the final proposition in Perlis \cite{perlisroots}. We write the proof given by Perlis for the sake of completeness.
\smallskip

\begin{lem}
\label{perm}
{\it    Let $G$ be a transitive subgroup of ${\mathfrak S}_n$ for some $n$. If there exists a finite Galois extension of a field $K$ with Galois group isomorphic to $G$, then there exists an irreducible polynomial $f$ over $K$ of degree $n$
and a labelling of the roots of $f$ so that the Galois group of $f$, viewed as a group permuting roots of $f$, is precisely $G$.}
\end{lem}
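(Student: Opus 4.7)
The plan is to produce $f$ as the minimal polynomial over $K$ of a primitive element of the fixed field corresponding, via the Galois correspondence, to a point stabiliser of the given transitive action. Concretely, I would identify ${{\rm Gal}}(F/K)$ with $G$ using the hypothesised isomorphism and set $H := \text{Stab}_G(1)$ for the given transitive action of $G$ on $\{1,\ldots,n\}$, so that $[G:H] = n$. A key observation for later is that since $G \leq {\mathfrak S}_n$ acts \emph{faithfully} on $\{1,\ldots,n\}$, the normal core $\bigcap_{g\in G} gHg^{-1}$ is trivial; that is, $H$ is core-free in $G$.

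Next I would let $L := F^H$, so $[L:K] = n$, and apply the primitive element theorem (valid as $L/K$ is separable, being an intermediate extension of the Galois, hence separable, extension $F/K$) to write $L = K(\alpha)$. Define $f$ to be the minimal polynomial of $\alpha$ over $K$; it is irreducible of degree $n$. The roots of $f$ inside $F$ are the conjugates $\{g(\alpha) : g \in G\}$, and since $H$ is the stabiliser of $\alpha$, two such conjugates coincide precisely when they represent the same coset of $H$. So the set of roots is in natural bijection with $G/H$ via $gH \mapsto g(\alpha)$. The splitting field $K_f$ of $f$ is the compositum of the conjugate fields $g(L)$, which by the Galois correspondence is the fixed field of $\bigcap_g gHg^{-1} = 1$; hence $K_f = F$ and ${{\rm Gal}}(K_f/K) = G$.

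To choose the labelling, I would pick the $G$-equivariant bijection $\varphi: G/H \to \{1,\ldots,n\}$ given by $gH \mapsto g\cdot 1$, and label the root $g(\alpha)$ by the integer $\varphi(gH)$. Then for any $\tau \in G$, the image of the root labelled $i = \varphi(gH)$ under $\tau$ is $\tau g(\alpha)$, which carries the label $\varphi(\tau g H) = \tau \cdot (g \cdot 1) = \tau \cdot i$. Hence the Galois action on the labelled roots coincides with the original embedding $G \hookrightarrow {\mathfrak S}_n$, so the Galois group of $f$, viewed as a permutation group on its roots via this labelling, is precisely $G$.

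The main obstacle I anticipate is ensuring ${{\rm Gal}}(K_f/K)$ is \emph{exactly} $G$ rather than some proper quotient $G/N$ (with $N \trianglelefteq G$, $N \leq H$). This is exactly the role of the hypothesis $G \leq {\mathfrak S}_n$: via core-freeness of $H$, it forces $K_f = F$. Without the faithfulness of the action, $K_f$ would be the strictly smaller fixed field $F^N$ and $f$ would only realise the proper quotient $G/N$ as its Galois group, so this step is where the precise form of the hypothesis (transitive \emph{subgroup} of ${\mathfrak S}_n$, not merely an abstract group with a transitive action on $n$ letters) is essential.
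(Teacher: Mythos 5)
Your proposal is correct and follows essentially the same route as the paper: take $H$ to be a point stabiliser, pass to the fixed field $L = F^H$, use faithfulness (core-freeness of $H$) to see that the splitting field of the minimal polynomial of a primitive element is all of $F$, and identify the roots with the cosets $G/H$ so that the Galois action matches the given embedding $G \leq {\mathfrak S}_n$. Your write-up just makes the $G$-equivariant labelling and the role of core-freeness slightly more explicit than the paper does.
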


\begin{proof}
   Let $G$ act transitively on $n$ symbols $\{1,2,\ldots,n\}$. Let $H\subset G$ be the stabiliser of the symbol $1$. Then there is a canonical labelling of the $n$ cosets in $G/H$ so that $G$ acts on $G/H$ exactly the same way $G$
acts on the original $n$ symbols that is $\{x_1 H, x_2 H,\ldots, x_n H\}$ with $x_j \cdot 1 = j$ and $g \cdot (x_j H)=x_{g \cdot j} H$ for all $g\in G $ and all $1\leq j\leq n$. The action is faithful
and transitive. We have ${\rm Stab}(x_j H)=x_j H x_j^{-1}$. Since the action is faithful, it follows that
$\bigcap \limits_{1\leq j\leq n} (x_j H x_j^{-1}) = \bigcap \limits_{1\leq j\leq n} ({\rm Stab}(x_j H))\ = \{ 1 \}.$
\smallskip

Let $K_G/K$ be finite Galois extension with Galois group isomorphic to $G$. We identify it with $G$. 
Let $L$ be the subfield of $K_G$ fixed by $H$. 
The Galois closure of $L/K$ in $K_G$ is the subfield of $K_G$ corresponding to the intersection of the
conjugates of $H$
in $G$ and that intersection is trivial because of argument in previous paragraph. Hence $K_G$ is the Galois closure of $L/K$. Let $f$ over $K$ be the minimal (hence irreducible) polynomial for a primitive element of $L/K$. We can identify the $n$ roots of $f$ with the $n$
cosets in $G/H$. This is the required polynomial $f$.
\end{proof}
\smallskip

\begin{lem}
    \label{conradbeautiful}
{\it Let $K$ be a perfect field and $K'/K$ be a finite extension. If for a group $G$, direct products $G^n$ are realisable as Galois group over $K$ for each $n\in \mathbb{N}$ then they are realisable as Galois groups over $K'$.   }
\end{lem}

\begin{proof}
    We will mimic the proof of Thm.\ 4.2 in \cite{conrad2023galois} which states that if every finite group can be realised as a Galois group over $\mathbb{Q}$ then every finite group can be realised as a Galois group over any finite extension of $\mathbb{Q}$.\smallskip
    
    Since $K$ is perfect and $K'/K$ is finite, we have that $K'/K$ is separable. Hence, $K'/K$ has finitely many intermediate fields. Let $n$ be the number of these intermediate fields (including $K'$ and $K$). Now $G^n$ is realisable over $K$ by assumption, say for $E/K$ Galois, we have ${\rm Gal}(E/K)\cong G^n$.\\ We have normal
subgroups $N_i = G \times G \times \dots \times {1} \times \dots \times G$ of $G^n$ for $1 \leq i \leq n$ where the $i$th coordinate is trivial and there is no restriction in other
coordinates. So $N_i \cong G^{n-1}$. Let $E_i$ be the subfield of $E$ corresponding to
$N_i$, so $E_i/K$ is Galois with ${\rm Gal}(E_i/K) \cong G^n / N_i \cong G$.\smallskip

Now for $i\neq j$, $E_i\cap E_j$ corresponds to subgroup generated by $N_i$ and $N_j$ which is $G^n$. Hence $E_i\cap E_j=K$. Suppose for some $i\neq j$ we have $E_i\cap K'=E_j\cap K'$. Since $E_i\cap E_j=K$, we get $E_i\cap K'=E_j\cap K'=K$. Now suppose that all $E_i\cap K'$ are distinct. Since we have $n$ intermediate fields of $K'/K$, $E_i\cap K'=K$ for some $i$. In either case we get an $i$ such that $E_i\cap K'=K$. Hence ${\rm Gal}(E_iK' / K')\cong {\rm Gal} (E_i/K)\cong G$. This realises $G$ as a Galois group over $K'$. By replacing $G$ with $G^m$ for any $m\in\mathbb{N}$ in the above argument, we can realise $G^m$ over $K'$ for any $m$. \end{proof}\smallskip

\begin{rem}
 In the above proof, we observe that for any $n$ we have some $E/K$ Galois with Galois group $G^n$ and $N_i$ normal subgroups of $G^n$ and $E_i$ subfield of $E$ corresponding to $N_i$. We observe that $N_i$ are not conjugate to each other in $G^n$ and they pairwise generate $G^n$. Hence $E_i$ are not isomorphic to each other over $K$ and are pairwise linearly disjoint over $K$ with $G$ as Galois group of each $E_i/K$. Thus we conclude the following.
 \smallskip
 
 Let $K$ be a perfect field and $K'/K$ be a finite extension and suppose for a group $G$, direct products $G^n$ are realisable as Galois group over $K$ for each $n\in \mathbb{N}$. Then we get arbitrarily large finite families of 
Galois extensions of $K$  inside a fixed $\bar{K}$ which are pairwise non-isomorphic over $K$  and are pairwise linearly disjoint over $K$ with each having Galois group $G$ over $K$. This statement also holds for $G^m$ in place of $G$ for any $m\in \mathbb{N}$ as well as for $K'$ in place of $K$.

\end{rem}

\smallskip

Now we prove Thm.\ \ref{n,r}.

\begin{proof} 
   
   Suppose $r=1$. By Lemmas \ref{sn} and \ref{perm}, there exists an irreducible polynomial $f$ over $K$ of degree $n$ with Galois group ${\mathfrak S}_n$. This $f$ satisfies $r_K(f)=1$.    \smallskip

   Now suppose $r>1$. In solutions of Exercises 3 and 4 in \cite{perlisroots}, a solvable group $G\subseteq{\mathfrak S}_n$ is constructed with the properties that its action is transitive on $n$ points, and a point stabiliser fixes precisely $r$ points. The construction is as follows: We divide the $n$ points into $n/r=s$ packets of size $r$. Let $G$ be the group of permutations on these $n$ points generated by independent
cyclic permutations on each packet, together with a cyclic permutation
on the overall set of packets. Hence $G$ is transitive. This construction of $G$ has the explicit description of a semidirect product of an $s$-fold direct product of cyclic groups $\Z/r\Z$ and a cyclic group $\Z / s\Z$. A semidirect product group law on $G$ is given by \[ ((a_1,\dots, a_s),b)\cdot((c_1,\dots,c_s),d)=((a_1,\dots, a_s) + (b\cdot (c_1,\dots, c_s)), b+d), \]
  where $b\cdot(c_1,\dots, c_s)=(c_{b+1},\dots, c_s,c_1,\dots, c_{b})$. \smallskip

Thus, $G=(\Z/r\Z)^s \rtimes \Z / s\Z,$
\text{where each}\ $\Z/r\Z$ \text{permutes points in a packet and action of}\ $\Z / s\Z$ \text{permutes}\ 
$s\ \text{copies of}\ \Z/r\Z.$\smallskip

It is easy to see that any point stabiliser is isomorphic to $(\Z/r\Z)^{s-1}$. The group $G$ is solvable since the following chain has successive cyclic quotients.
$$1\subseteq \Z/r\Z\subseteq(\Z/r\Z)^2\subseteq \dots \subseteq (\Z/r\Z)^s\subseteq G.$$  
    
    Since direct product of solvable groups is solvable, direct products $G^i$ for $i\in \mathbb{N}$ are solvable. By Shafarevich's theorem 
    (\cite{Shafarevich1989}), $G^i$ for $i\in \mathbb{N}$ are realisable as Galois groups over $\mathbb{Q}$. Hence by Lemma \ref{conradbeautiful}, $G$ is realisable as Galois group over number field $K$. By Lemma \ref{perm}, there exists an irreducible polynomial $f$ over $K$ of degree $n$
and a labelling of the roots so that the Galois group of $f$, viewed as a group permuting roots of $f$, is precisely $G$. This $f$ satisfies $r_K(f)=r$.
 \end{proof}
 
\smallskip

\subsection{A simple lemma about $s_K(L)$} 
\label{alternate proofs}
\hfill \smallskip

We begin this subsection by giving an alternate proof for last equality in (3) of Thm.\ \ref{Perlis}, which is stated in \cite{perlis2004roots} and proved in first Proposition in unpublished note of Perlis \cite{perlisroots}. The equality states: $r_K(f)=[N_G(H):H].$

\begin{proof} 
   We observe that a field is isomorphic to $K(\alpha)$ over $K$ if and only if it is of the form $K(\alpha')$ for some root $\alpha'$ of $f$. All these fields are contained in $K_f$.  By Galois correspondence, $K$-isomorphic subfields of a Galois extension over $K$ correspond to conjugate subgroups of its Galois group. Hence, $s_K(f)=$ number of distinct $K(\alpha')=$ number of distinct subgroups of $G$ that are conjugate to $H$ in $G = [G:N_G(H)]$. By Thm.\ \ref{Perlis} (2), we are done.
\end{proof}

\medskip

We state the simple observation used above, as a lemma.

\begin{lem}
\label{L isom}
{\it   Let $K$ be perfect field. For finite $L/K$, $s_K(L)$ (as defined in Sec. \ref{prelim}) is the number of distinct fields inside $\bar{K}$ isomorphic to $L$ over $K$. }
\end{lem}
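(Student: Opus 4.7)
The plan is to use the primitive element theorem together with the elementary fact that any $K$-embedding of $L = K(\alpha)$ into $\bar K$ is determined by where it sends $\alpha$, and that image must be a root of the minimal polynomial $f$ of $\alpha$ over $K$.

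First I would fix a primitive element so that $L = K(\alpha)$, let $f \in K[t]$ be the minimal polynomial of $\alpha$, and recall that by definition $s_K(L) = s_K(f)$ is the number of distinct subfields of the splitting field $K_f \subset \bar K$ of the form $K(\alpha_j)$, where $\alpha_j$ ranges over the roots of $f$ in $\bar K$. So it suffices to show that the set
\[
\cS \;=\; \{\, M \subset \bar K : M \cong L \text{ as extensions of } K \,\}
\]
coincides with the set $\cT = \{\, K(\alpha_j) : f(\alpha_j) = 0 \,\}$.

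Next I would establish both inclusions. For $\cT \subseteq \cS$: for any root $\alpha_j$ of $f$ in $\bar K$, the fields $K(\alpha)$ and $K(\alpha_j)$ are both isomorphic over $K$ to $K[t]/(f)$ via the evaluation maps $\alpha \mapsto t$ and $\alpha_j \mapsto t$, so they are $K$-isomorphic. For $\cS \subseteq \cT$: if $\phi : L \to M$ is a $K$-isomorphism with $M \subset \bar K$, then $\alpha' := \phi(\alpha)$ satisfies $f(\alpha') = \phi(f(\alpha)) = 0$, so $\alpha'$ is a root of $f$, and $M = \phi(K(\alpha)) = K(\alpha')$. Finally, every root of $f$ lies in $K_f$, so $K(\alpha') \subset K_f$, which matches the ambient field in which $s_K(f)$ was originally counted.

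I do not expect any serious obstacle; the statement is essentially a repackaging of the observation already used implicitly in the alternate proof of Thm.\ \ref{Perlis}(3) given just above. The only thing to watch is that $s_K(f)$ was defined as a count of subfields of $K_f$ rather than of $\bar K$, so I would explicitly note that every $K$-isomorphic copy of $L$ in $\bar K$ automatically lies in $K_f$ (being generated by a root of $f$), which prevents an accidental undercount when widening the ambient field from $K_f$ to $\bar K$.
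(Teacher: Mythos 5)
Your proposal is correct and follows essentially the same route as the paper: invoke the primitive element theorem to write $L=K(\alpha)$ with minimal polynomial $f$, and observe that a subfield of $\bar K$ is $K$-isomorphic to $L$ exactly when it equals $K(\alpha')$ for some root $\alpha'$ of $f$, so the count of such fields is $s_K(f)=s_K(L)$. Your extra care about widening the ambient field from $K_f$ to $\bar K$ is a reasonable (if minor) elaboration of what the paper leaves implicit.
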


\begin{proof}
   By primitive element theorem, $L=K(\alpha)$ with $\alpha$ root of some irreducible polynomial $f$ over $K$. Now, $L'$ is isomorphic with $L$ over $K$ $\iff$ $L'=K(\alpha')$ for some root $\alpha'$ of $f$. Thus $s_K(L)=s_K(f)=$ number of distinct $K(\alpha')$ for $\alpha'$ root of $f$= number of distinct fields isomorphic to $L$ over $K$. 
\end{proof}

\begin{rem}
\label{tilde L}
Let $L_1,L_2,\dots,L_{s_K(L)}$ be the distinct fields as in above Lemma\ \ref{L isom}. Hence we have $\tilde{L}=L_1 L_2 \dots L_{s_K(L)}$, that is the Galois closure of $L/K$ is compositum of distinct fields isomorphic to $L$ over $K$.

\end{rem}
\smallskip

Using Lem.~\ref{L isom}, we will give an alternate proof for Cluster Magnification theorem Thm.\ \ref{Vanchinathan}, Thm.\ 1 in \cite{krithika2023root} in Sec.\ \ref{base change}.\smallskip

\begin{rem}
Let $K$ be a number field. Note that $^nP_k$ and $^nC_k$ are integers with $^nP_k=k!$ $ ^nC_k$. By Thm. \ref{n,r}, 
we get irreducible polynomial over the field with degree $^nP_k$ and cluster size $k!$. The following theorem says that
 this is also true under some condition for a general perfect field.
\end{rem}

By using the above Lemma ~\ref{L isom}, we give an alternate proof for Thm. 3 in \cite{krithika2023root}.

\begin{thm}
\label{nPk}
{\it    Let $K$ be a perfect field. Let $f$ over $K$ be irreducible of degree $n$ with Galois group ${\mathfrak S}_n$. 
For $1 \leq k \leq n-2$, let $L_k$ be
an extension of $K$ obtained by adjoining any $k$ roots of $f$ in $\bar{K}$. Let $g$ be the minimal polynomial over $K$ for a primitive element of $L_k$. This polynomial has degree $^nP_k$ and cluster size $k!$.
}
\end{thm}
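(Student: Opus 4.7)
\medskip

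The plan is to compute $s_K(L_k)$ directly using Lemma~\ref{L isom} and then recover both the degree and the cluster size from the formula $r_K(L_k)\cdot s_K(L_k) = [L_k:K]$.

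First I would fix an ordering of the $n$ roots $\alpha_1,\dots,\alpha_n$ of $f$ in $\bar K$ and identify $G = {\rm Gal}(K_f/K)$ with $\mathfrak{S}_n$ via its action on these roots. Writing $L_k = K(\alpha_{i_1},\dots,\alpha_{i_k})$ for a fixed $k$-subset $S = \{i_1,\dots,i_k\}$, the subgroup of $G$ fixing $L_k$ pointwise is the pointwise stabiliser of $S$, i.e.\ $H = \mathfrak{S}_{\{1,\dots,n\}\setminus S} \cong \mathfrak{S}_{n-k}$. Hence $[L_k:K] = [G:H] = n!/(n-k)! = {}^nP_k$, which takes care of the degree (and gives irreducibility of $g$ for free, since $g$ is by definition the minimal polynomial of a primitive element of $L_k$).

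Next I would count $s_K(L_k)$ using Lemma~\ref{L isom}, namely as the number of distinct subfields of $\bar K$ that are $K$-isomorphic to $L_k$. Since $G = \mathfrak{S}_n$ acts $n$-transitively on the roots, every field of the form $K(\alpha_{j_1},\dots,\alpha_{j_k})$ is conjugate to $L_k$ over $K$ and hence $K$-isomorphic to it; conversely every subfield $K$-isomorphic to $L_k$ is of this form. So $s_K(L_k)$ equals the number of distinct such fields indexed by $k$-subsets of $\{1,\dots,n\}$. The key point is that different $k$-subsets give different fields: under the Galois correspondence, $K(\alpha_{j_1},\dots,\alpha_{j_k})$ is the fixed field of the pointwise stabiliser of $\{j_1,\dots,j_k\}$, and since $n-k\geq 2$ the natural action of this symmetric group on the complement is faithful and has exactly $\{j_1,\dots,j_k\}$ as its set of common fixed points; hence the subgroup, and therefore the field, determines the subset. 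This yields $s_K(L_k) = \binom{n}{k} = {}^nC_k$.

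Combining the two counts via Perlis' relation $r_K(L_k)\,s_K(L_k) = [L_k:K]$ gives $r_K(L_k) = {}^nP_k/{}^nC_k = k!$, finishing the proof. The main (though still mild) obstacle is the injectivity step in the previous paragraph; it is precisely here that the hypothesis $k \leq n-2$ enters, since for $k = n-1$ one would have $n-k = 1$ and the symmetric group $\mathfrak{S}_1$ is trivial, so different singleton complements would correspond to the same (trivial) subgroup and the counting would collapse. Everything else is an application of Lemma~\ref{L isom} and elementary properties of the $\mathfrak{S}_n$-action on the roots.
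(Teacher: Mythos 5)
Your proposal is correct and follows essentially the same route as the paper: compute $[L_k:K]={}^nP_k$, use Lemma~\ref{L isom} to identify $s_K(L_k)$ with the number of fields $K(\alpha_{j_1},\dots,\alpha_{j_k})$, count these as ${}^nC_k$, and divide via Thm.~\ref{Perlis}~(2). The only difference is that you spell out the injectivity of the map from $k$-subsets to fields (via pointwise stabilisers, which is where $k\leq n-2$ enters), a detail the paper's proof leaves implicit.
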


\begin{proof}
    Let $\alpha_1,\alpha_2,\ldots,\alpha_n$ be roots of $f$ in $\bar{K}$. Let $L_k=K(\alpha_1,\alpha_2, \ldots, \alpha_k)$. We have that degree of $L_k/K$ is $n(n-1)\ldots(n-k+1)= $$ ^nP_k$. Since Galois group of $f$ is ${\mathfrak S}_n$, we have $L'$ is isomorphic to $L_k$ over $K$ $\iff$ $L'=K(\alpha_{i_1},\alpha_{i_2},\ldots,\alpha_{i_k})$ for $k$ roots $\alpha_{i_j}: 1 \leq j \leq k$ of $f$. By Lemma\ \ref{L isom}, $s_K(L_k)$ is number of distinct fields inside $\bar{K}$ isomorphic to $L_k$ over $K$ which is precisely the number of ways of choosing $k$ roots from $n$ roots which is $^nC_k$. By Thm.\ \ref{Perlis} (2), $r_K(L_k)=k!$.
\end{proof}
\medskip


\section{Cluster Magnification}\label{Cluster Magnification}

\subsection{Strong cluster magnification} \label{sec-SCM}\hfill

Let $M/K$ be a finite extension of degree $m$ with $r_K(M)=k$. 
\begin{defn}
\label{SCM}
\hfill \smallskip

\noindent
$M/K$ is said to be obtained by strong cluster magnification from a subextension $L/K$ if we have the following:
\smallskip
 
 \begin{enumerate}
 \item  $[L:K] = n > 2,$ 
 \smallskip
      
\item there exists a finite Galois extension $F/K$ such that the Galois closure $\tilde{L}$ of $L$ in $\bar{K}$ and $F$ are linearly disjoint over $K$. \smallskip

\item $LF=M$.\smallskip

 \end{enumerate}

The number $[F:K]$ is called the magnification factor and denoted by $d$. 
The magnification is called trivial if $F = K$ and nontrivial otherwise. 
\end{defn}

\begin{rem}
   Suppose we have an extension $L/K$,and a Galois extension $F/K$ such that $\tilde{L}\cap F=L\cap F$. Then $LF/ (L\cap F)$ is obtained by strong cluster magnification from $L/ (L\cap F)$.
\end{rem}

\begin{rem}
    Let $LF/K$ be obtained by strong cluster magnification from $L/K$ through $F/K$. If $K\subset L' \subset L$. Then $L'F/K$ is obtained by strong cluster magnification from $L'/K$ through $F/K$.
\end{rem}

We prove the following hereditary property for strong cluster magnification. 

\smallskip
\begin{prop} \hfill \medskip

\noindent
{\it Let $M/K$ be obtained by strong cluster magnification from $L/K$ through $F/K$ as in Def \ref{SCM}. Then for any $K\subset K'\subset L$ the extension $M/K'$ is obtained by strong cluster magnification from $L/K'$ through $K'F/K'$ with same magnification factor.}
\end{prop}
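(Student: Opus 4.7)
The plan is to verify the three conditions of Definition \ref{SCM} for the triple $(M/K',\, L/K',\, K'F/K')$ with magnification factor $d=[F:K]$. The setup half is routine: $K'F/K'$ is Galois because $F/K$ is Galois, the identity $L\cdot(K'F)=LF=M$ follows from $K'\subseteq L$, and the degree condition $[L:K']>2$ is an implicit hypothesis on $K'$. Everything interesting is packed into two facts: the Galois closure of $L$ over $K'$ is linearly disjoint from $K'F$ over $K'$, and $[K'F:K']=[F:K]$.

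For the main step, let $\hat{L}$ denote the Galois closure of $L$ over $K'$ inside $\bar{K}$. A key observation is $\hat{L}\subseteq\tilde{L}$: since $\tilde{L}/K$ is Galois it is also Galois over $K'$, and it contains $L$, so it contains the smallest such field. By Remark \ref{morandi} the given linear disjointness of $\tilde{L}$ and $F$ over $K$ amounts to $\tilde{L}\cap F=K$. I would then apply Lemma \ref{jacob} with $E=\tilde{L}$, the Galois extension $F$, and $E'=K'$; this single application delivers both identities
\[
\tilde{L}\cap K'F=K'\qquad\text{and}\qquad K'\cap F=K.
\]
The first, combined with the sandwich $K'\subseteq \hat{L}\cap K'F\subseteq\tilde{L}\cap K'F$, forces $\hat{L}\cap K'F=K'$. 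Since $K'F/K'$ is Galois, Remark \ref{morandi} then yields the required linear disjointness of $\hat{L}$ and $K'F$ over $K'$.

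Finally, the second identity $K'\cap F=K$ together with $F/K$ Galois says (again by Remark \ref{morandi}) that $K'$ and $F$ are linearly disjoint over $K$, yielding $[K'F:K']=[F:K]=d$, so the magnification factor is indeed preserved. The only real technical point is the downshift from $\tilde{L}$ to $\hat{L}$ in the linear disjointness assertion; the inclusion $\hat{L}\subseteq\tilde{L}$ makes this downshift automatic, so once Lemma \ref{jacob} has been invoked there is no further obstacle.
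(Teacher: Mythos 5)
Your proof is correct and follows essentially the same route as the paper: one application of Lemma \ref{jacob} (with $E'=K'$ and the Galois extension $F$) to get $K'\cap F=K$ and the intersection condition, the inclusion of the Galois closure of $L/K'$ inside $\tilde{L}$, and $[K'F:K']=[F:K]$ for the magnification factor. The only cosmetic difference is that you apply the lemma to $\tilde{L}$ and then downshift to $\hat{L}$ by the sandwich argument, whereas the paper first passes to the closure $L_1\subset\tilde{L}$ and applies the lemma to $L_1$ directly; this is immaterial (and your explicit flagging of the $[L:K']>2$ condition is, if anything, more careful than the paper).
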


\begin{proof} We check that the conditions in Def.\ \ref{SCM} hold.\smallskip

    Let $L_1$ be Galois closure of $L/K'$. So $L_1\subset \tilde{L}$. Since $\tilde{L}$ and $F$ are linearly disjoint over $K$, we conclude that $L_1$ and $F$ are linearly disjoint over $K$. Hence by Lemma \ref{jacob} we have $$L_1\cap F= K \iff L_1 \cap K'F=K' \ \text{and}\ K'\cap F=K.$$

    Hence $K'F/K'$ is Galois and $L_1$ and $K'F$ are linearly disjoint over $K'$. Also $M=LF=LK'F$ and hence we are done. The magnification factor is same since $[F:K]=[K'F:K']$.
\end{proof}

\smallskip

Let $\tilde{M}$ be Galois closure of $M/K$ inside $\bar{K}$. Let $G'= {{\rm Gal}}(\tilde{M}/K)$. Let $H' = {{\rm Gal}}(\tilde{M}/M)$ be the subgroup of $G'$ with fixed field $M$. Hence $H'$ is normal in $G'$ if and only if $H'$ is trivial. \smallskip


\begin{prop} \label{prop} \hfill \medskip

\noindent   {\it Suppose $M/K$ is obtained by strong cluster magnification from  $L/K$. Let $\tilde L$ and $F/K$ be as in the Def.\ \ref{SCM}  and let $R: = {{\rm Gal}}(F/K)$. Let $G = {{\rm Gal}}(\tilde{L}/K)$ and $H= {{\rm Gal}}(\tilde{L}/L)$. Then the following hold.

\smallskip 

\begin{enumerate}

\smallskip \item $r_K(M)=r_K(L)\ [F:K], \quad s_K (M) = s_K(L)=[G:N_G(H)]$.

\smallskip  \item $\tilde{L} F = \tilde{M}$.

\smallskip \item $G'\cong G\times R$ where isomorphism is given by  $\lambda\in G' \mapsto (\lambda|_{\tilde{L}},\lambda|_{F})$.

\smallskip \item  Furthermore $H'\cong H\times \{e\} \subset G\times R$ under the above isomorphism.

\smallskip \item $F$ is uniquely determined by $L$ and $M$.

    \end{enumerate}
}
\end{prop}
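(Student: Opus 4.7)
The five claims form a coherent package centered on a product decomposition of $G' = {\rm Gal}(\tilde{M}/K)$: parts (2)--(4) set up the product structure and locate $H'$ inside it, (1) is then a normalizer computation, and (5) singles out $F$ as a canonical piece of that decomposition. I would prove them in the order (2), (3), (4), (1), (5). For (2), $\tilde{L} F$ is Galois over $K$ as the compositum of two Galois extensions and contains $M = LF$, so $\tilde{M} \subseteq \tilde{L} F$; conversely $\tilde{M}$ contains $L$, hence contains its Galois closure $\tilde{L}$, and it contains $F \subseteq M$, giving the reverse inclusion. For (3), the restriction map $\lambda \mapsto (\lambda|_{\tilde{L}}, \lambda|_F)$ is a well-defined homomorphism $G' \to G \times R$ because both $\tilde{L}$ and $F$ are Galois over $K$; its kernel fixes $\tilde{L} F = \tilde{M}$ and so is trivial, while linear disjointness yields $|G'| = [\tilde{L}:K][F:K] = |G|\,|R|$, forcing the map to be an isomorphism. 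For (4), an element $(\sigma, \tau) \in G \times R$ fixes $M = LF$ iff it fixes $L$ (equivalently $\sigma \in {\rm Gal}(\tilde{L}/L) = H$) and fixes $F$ (equivalently $\tau = e$), so $H'$ is identified with $H \times \{e\}$.

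For (1), I would invoke Thm.\ \ref{Perlis}(3) to write $r_K(M) = [N_{G'}(H'):H']$. Normalizers in a direct product factor componentwise: since $(a,b)(h,e)(a,b)^{-1} = (aha^{-1},e)$, we have $N_{G \times R}(H \times \{e\}) = N_G(H) \times R$, yielding
\[ r_K(M) = [N_G(H):H]\cdot|R| = r_K(L)\,[F:K]. \]
The identity $r_K \cdot s_K = [M:K]$ then gives $s_K(M) = [L:K][F:K]/(r_K(L)[F:K]) = [L:K]/r_K(L) = s_K(L)$, and the equality $s_K(L) = [G:N_G(H)]$ is exactly the alternate description established in Sec.\ \ref{alternate proofs} via Lem.\ \ref{L isom}.

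For (5), suppose $F_1/K$ is another Galois extension linearly disjoint from $\tilde{L}$ over $K$ with $LF_1 = M$. Applying (2)--(4) with $F_1$ in place of $F$, the subgroup $N := {\rm Gal}(\tilde{M}/F_1)$ is a normal complement of ${\rm Gal}(\tilde{M}/\tilde{L}) = \{e\} \times R$ in $G' = G \times R$ that contains $H' = H \times \{e\}$. The plan is to show $N = G \times \{e\}$, so that $F_1 = \tilde{M}^N = F$. Any normal complement of $\{e\} \times R$ is the graph $\{(g,\psi(g)) : g \in G\}$ of a homomorphism $\psi : G \to Z(R)$ that is constant on $G$-conjugacy classes, and the containment $H \times \{e\} \subseteq N$ forces $\psi|_H = e$; the remaining task is to deduce $\psi \equiv e$. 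This is the main obstacle, since such twisted complements can a priori exist (e.g.\ when $G$ is abelian and $R$ has nontrivial center); I would try to rule them out by combining the degree hypothesis $[G:H] = [L:K] > 2$ from Def.\ \ref{SCM} with the precise product structure forced by the Cluster Magnification Theorem (Thm.\ \ref{Vanchinathan}), reducing the problem to showing that the group $G/(\overline{H}[G,G])$ admits no nontrivial homomorphism into $Z(R)$ compatible with $N$ being normal.
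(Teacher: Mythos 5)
Your treatment of (1)--(4) is correct. For (2)--(4) you follow essentially the paper's route; for (1) you genuinely differ: the paper simply quotes the Cluster Magnification Theorem (Thm.\ \ref{Vanchinathan}) to get $r_K(M)=r_K(L)\,[F:K]$, whereas you rederive it inside the product decomposition via $N_{G\times R}(H\times\{e\})=N_G(H)\times R$ and Thm.\ \ref{Perlis}(3), and you verify the isomorphism in (3) by hand where the paper cites an external result. This makes the proposition self-contained and is a perfectly good (arguably cleaner) alternative.

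For (5) you stop at exactly the right place, and the step you could not close cannot in fact be closed: the normal complements of $1\times R$ containing $H\times\{e\}$ are precisely the graphs of homomorphisms $\psi\colon G\to Z(R)$ with $H^G\subseteq\ker\psi$, and nontrivial such $\psi$ do occur under the hypotheses of Def.\ \ref{SCM}, so statement (5) fails as stated. Concretely, take $K=\mathbb{Q}$, $L=\mathbb{Q}(\sqrt[4]{2})$ (so $\tilde{L}=\mathbb{Q}(\sqrt[4]{2},i)$, $G\cong D_4$, $[L:K]=4>2$), $F=\mathbb{Q}(\sqrt{3})$ and $M=LF=\mathbb{Q}(\sqrt[4]{2},\sqrt{3})$. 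Then $F_1=\mathbb{Q}(\sqrt{6})$ is also Galois over $\mathbb{Q}$, is linearly disjoint from $\tilde{L}$ (if $\sqrt{6}\in\tilde{L}$ then, since $\sqrt{2}\in\tilde{L}$, also $\sqrt{3}\in\tilde{L}$, forcing $\tilde{L}=\mathbb{Q}(\sqrt{2},\sqrt{3},i)$, which is abelian, contradicting $G\cong D_4$), and $LF_1=M$ because $\sqrt{2}\in L$ gives $\sqrt{3}=\sqrt{6}/\sqrt{2}\in LF_1$; here $\psi$ is the quadratic character of $G$ cutting out $\mathbb{Q}(\sqrt{2})\subset L$, which kills $H$. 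So $F$ is not determined by $L$ and $M$. The paper's own proof of (5) makes precisely the leap you refused to make: from ``$R_0={\rm Gal}(\tilde{M}/\tilde{L})$ is uniquely determined'' it infers that its complement $G_0$ is uniquely determined, which is false whenever ${\rm Hom}(G/H^G,Z(R))$ is nontrivial. Uniqueness of $F$ would require an extra hypothesis (for instance $t_K(L)=1$, or more generally the vanishing of ${\rm Hom}(G/H^G,Z(R))$), so no argument from the stated hypotheses alone can complete your step --- or the paper's.
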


\begin{proof} 
\hfill

\begin{enumerate}

\item From (2) and (3) in Thm.\ \ref{Perlis}, we have
$s_K(L)=[G:N_G(H)]$. Now from Thm.\ \ref{Vanchinathan}, 
$[M:K]=[L:K]\ [F:K]$ and $r_K(M)=r_K(L)\ [F:K]$. \smallskip

 Also from Thm.\ \ref{Perlis}, $[M:K]= r_K(M)\ s_K(M)$ and $[L:K]= 
 r_K(L)\ s_K(L)$. Hence $s_K (M) =s_K(L)$.\smallskip

\item Since $M=LF$, we have $\tilde{L} F\subset \tilde{M}$. Since $\tilde{L}/K$ and $F/K$ are Galois it follows that $\tilde{L} F/K$ is Galois. Thus, $\tilde{L} F = \tilde{M}$.\smallskip

\item $\tilde{L}$ and $F$ are linearly disjoint over $K$. Since $F/K$ is Galois, it follows that $\tilde{L}\cap F=K$. Therefore, by (2) and  \cite[Thm.\ 2.1]{conrad2023galois}, we conclude that $G'\cong G\times R$ under the given isomorphism.\smallskip

\item Let $\lambda\in G'$. We have $$\lambda\in H'\iff \lambda |_{M}=id_M \iff \lambda |_L=id_L \ \text{and}\  \lambda |_{F}=id_F \iff \lambda |_{\tilde{L}}\in H\subset G\ \text{and}\ \lambda |_F = 1 \in R.$$

\item     We have isomorphism $G'\cong G \times R$. Hence $G'=G_0 R_0$ where $G_0,R_0\subset G'$ with $G_0\cong G\times 1$ and $R_0\cong 1\times R $ under above isomorphism. Furthermore, $\tilde{L}=\tilde{M}^{R_0}$ and $F=\tilde{M}^{G_0}$. Now if a subextension $L$ of $M$ is given, then $\tilde{L}$ is uniquely determined inside $\tilde{M}$. Thus $R_0$ is uniquely determined inside $G'$ which implies that $G_0$ is uniquely determined inside $G'$. Hence $F$ is uniquely determined from $L$ and $M$. \end{enumerate} 

\vspace{-0.7 cm}
\end{proof}

\smallskip

In view of property (1) in Prop.\ \ref{prop}, we see that ${r_K(L)} | {r_K(M)}$ and the ratio ${r_K(M)}/{r_K(L)}$ is indeed same as the degree $d$ = $[F: K]$, which is the magnification factor for $M / L$ over $K$ as defined earlier.
\medskip

\noindent {\bf A criterion for strong cluster magnification:} We now establish an equivalent criterion for strong cluster magnification for a field extension in terms of Galois groups.\smallskip

\begin{thm} \label{criterion}
{\it     An extension $M/K$ is obtained by nontrivial strong cluster magnification from some subextension $L/K$
    if and only if ${{\rm Gal}}(\tilde{M}/K)\cong A\times B$ for nontrivial groups $A$ and $B$ and ${{\rm Gal}}(\tilde{M}/M)\cong A'\times 1$ (under the same isomorphism) for a subgroup $A'\subset A$ with $[A:A']>2$. }
\end{thm}

\begin{proof}
    Suppose $M/K$ is obtained by nontrivial strong cluster magnification from a subextension $L/K$. From Prop.\ \ref{prop} (3) and (4), we get $A=G, B=R$ and $A'=H$ with the required conditions since, $d=|R|>1,n=[G:H]>2$. \smallskip

    Conversely, suppose $G'\cong A\times B$ for nontrivial groups $A$ and $B$ and $H'\cong A'\times \{e\}$ (under the same isomorphism) for a subgroup $A'\subset A$ with $[A:A']>2$. We identify $G'$ and $H'$ with their images under the isomorphism. Now we check the three conditions of Def.\ \ref{SCM} for $M/K$.\smallskip
    
\begin{enumerate}
    
     \item Since $1\times B$ is normal in $G'$, we conclude that $\tilde{M}_B :=\tilde{M}^{1\times B}$ is Galois over $K$ with Galois group $A$. Let $L :=\tilde{M}^{A'\times B}$. Hence $L/K$ has degree $n=[A:A']>2$.\smallskip
     
         \item Since $A\times 1$ is normal in $G'$, we conclude that $F :=\tilde{M}^{A\times 1}$ is Galois over $K$ with Galois group $B$ and degree $d=|B|$. Let $\tilde{L}$ be Galois closure of $L$ in $\bar{K}$. Since, $L\subset \tilde{M}_B$, we have $\tilde{L}\subset \tilde{M}_B$. The intersection of fields $\tilde{M}_B\cap F$ corresponds to the subgroup generated by $A\times 1$ and $1\times B$ which is $G'$. Hence $\tilde{M}_B\cap F=K$. Thus $\tilde{L}\cap F=K$. So $\tilde{L}$ and $F$ are linearly disjoint over $K$.\smallskip
         
 \item Now, $M=\tilde{M}^{A'\times 1}$. Hence $L,F\subset M$, thus $LF\subset M$. Since $\tilde{L}\cap F=K$, we conclude $L\cap F = K$. Hence $[LF:K]=[L:K]\ [F:K] = nd$. Also, $[M:K] = [G':H'] = [A:A']\ |B|=nd$. Hence $LF=M$.

 \end{enumerate}
The magnification is nontrivial since $B$ is nontrivial group.
\end{proof}

\begin{rem}
    In the above proof of the converse part, we can additionally conclude $\tilde{M}_B=\tilde{L}$. Since, $ \tilde{M}_B \cap F=\tilde{L}\cap F=K$ we get $[F:K]=[\tilde{M}_B F: \tilde{M}_B]=[\tilde{L}F:F]$. Now, $ \tilde{M}_B F\subset \tilde{M}$ corresponds to intersection of the groups $A\times 1$ and $1\times B$ which is trivial. Hence $\tilde{M}_B F=\tilde{M}$. From prop.\ \ref{prop} (2), $\tilde{M}=\tilde{L}F$. Hence, $[\tilde{M}_B :K]=[\tilde{L}:K]$. Thus, $ \tilde{M}_B =\tilde{L}$.
\end{rem}

\begin{cor}
    \label{galois}
    \hfill \medskip

{\it \noindent
    Let $M/K$ be Galois. Then $M/K$ is obtained by nontrivial strong cluster magnification from some subextension $L/K$
    if and only if ${{\rm Gal}}(M/K)\cong A\times B$ for nontrivial groups $A$ and $B$ with $|A|>2$. If this happens then, $L/K$ is also Galois. }
\end{cor}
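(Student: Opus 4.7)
The plan is to specialise Theorem \ref{criterion} to the case $\tilde{M}=M$ and track what additional structure the Galois hypothesis provides. Since $M/K$ is Galois, $\tilde{M}=M$, so the subgroup ${{\rm Gal}}(\tilde{M}/M)$ is trivial. Under any decomposition ${{\rm Gal}}(\tilde{M}/K)\cong A\times B$, the image of the trivial group is $\{e\}\times 1$, which is of the form $A'\times 1$ with $A'=\{e\}$; in particular $[A:A']=|A|$. Thus the condition $[A:A']>2$ from Theorem \ref{criterion} becomes exactly $|A|>2$, and the equivalence in the corollary falls out by direct substitution.

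For the forward direction I would assume $M/K$ is Galois and obtained by nontrivial strong cluster magnification, invoke Theorem \ref{criterion} to obtain $A,B$ nontrivial and $A'\subset A$ with $[A:A']>2$, and then argue that ${{\rm Gal}}(\tilde{M}/M)$ being trivial forces $A'=\{e\}$, so $|A|=[A:A']>2$ and ${{\rm Gal}}(M/K)=A\times B$. For the converse, given the decomposition ${{\rm Gal}}(M/K)\cong A\times B$ with $|A|>2$ and both factors nontrivial, I would apply Theorem \ref{criterion} with $A'=\{e\}$ to conclude that $M/K$ arises via nontrivial strong cluster magnification.

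For the final assertion that $L/K$ is Galois, I would read off $L$ from the converse construction in the proof of Theorem \ref{criterion}, namely $L=\tilde{M}^{A'\times B}=M^{\{e\}\times B}$. Because $\{e\}\times B$ is normal in $A\times B$, Galois correspondence yields that $L/K$ is Galois with ${{\rm Gal}}(L/K)\cong A$.

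I do not anticipate a genuine obstacle here; the content is almost entirely bookkeeping once Theorem \ref{criterion} is in hand. The only point that merits a sentence of care is checking that the $A'$ from Theorem \ref{criterion} must really be trivial in the Galois setting (so that the hypothesis $[A:A']>2$ matches the cleaner hypothesis $|A|>2$ stated in the corollary), and that normality of $\{e\}\times B$ in the product makes $L/K$ Galois rather than merely a subextension.
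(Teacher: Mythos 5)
Your proposal is correct and follows essentially the same route as the paper: specialise Theorem \ref{criterion} using $\tilde{M}=M$ so that $H'$ is trivial, force $A'=\{e\}$ so $[A:A']=|A|$, and obtain Galois-ness of $L$ from the fact that $L$ is the fixed field of the normal subgroup $1\times B$ (the paper phrases this as $\tilde{L}=\tilde{M}_B=L$ via the remark following the criterion). No gap to report.
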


\begin{proof}
    Since $M/K$ is Galois, $\tilde{M}=M$, $G'={\rm Gal}(M/K)$ and $H'$ is trivial. So $A'$ is trivial and $[A:A'] =|A|$. Also $\tilde{L} = \tilde{M}_B =  L$. Hence, $L/K$ is Galois.
\end{proof}

\smallskip
The following says, strong cluster magnification behaves well with respect to $K$-isomorphisms.

\smallskip
\begin{rem}
\label{isom}
    Let $M'/K$ be contained in $\bar{K}$ and $\sigma : M \rightarrow M'$ be an isomorphism over $K$. If $M/K$ is obtained by strong cluster magnification from $L/K$, then $M'/K$ is obtained by strong cluster magnification from $\sigma(L)/K$. 
\end{rem}



    

\begin{rem}

From the proof of Thm.\ \ref{criterion}, we get the following way to construct all fields $M/K$ which are obtained by nontrivial strong cluster magnification from some subextension $L/K$.

\smallskip

    Suppose $A\times B$ is realisable as a Galois group over $K$ for nontrivial groups $A$ and $B$ with a subgroup $A'\subset A$ with $[A:A']>2$ such that $\bigcap \limits_{a\in A}\ a A' a^{-1}= 1$. Let $P$ be such that ${\rm Gal}(P/K)=A\times B$. Then our required fields are $M=P^{A'\times 1}$ and $L=P^{A'\times B}$.

\end{rem}

\medskip

\subsection{Strong cluster magnification problem for irreducible polynomials} 
\hfill 

Let $g$ be an irreducible polynomial over $K$ with degree  $m$ and $r_K(g) = k$. 

\begin{defn}
\hfill \smallskip

\noindent
We have the following equivalent definitions:
\begin{enumerate}
    \item The polynomial $g$ is said to be obtained by strong cluster magnification from a polynomial $f$ over $K$ if we have the following:\smallskip  
     
     \begin{enumerate}
         \item an extension $K(\alpha)/K$ of degree $n>2$ with $f$ as the minimal polynomial of $\alpha$
          over $K$ and $r_K(f)=r$,\smallskip  
          
         \item there exists a Galois extension $F/K$ of degree $d$ such that $K_f$ and $F$ are linearly disjoint over $K$, and \smallskip  
         
 \item $K(\alpha)F=K(\gamma)$ where $\gamma$ is some root of $g$ in $\bar{K}$.\smallskip  

 \end{enumerate}

The magnification is called trivial if $d=1$ and nontrivial otherwise. ($d$ is the magnification factor).\smallskip  

\item  The polynomial $g$ over $K$ is said to be obtained by strong cluster magnification from a polynomial $f$ over $K$, if for some root $\gamma$ of $g$ in $\bar{K}$, the field extension $M=K(\gamma)$ over $K$ is obtained by strong cluster magnification from $L/K$ with $L=K(\alpha)$, where $\alpha$ is a root of the irreducible polynomial $f$. 
\smallskip  
\end{enumerate}
     
\end{defn}

\begin{rem} 
 Let $s_K(g)=s$. Let $\{ \gamma_1, \gamma_2,\ldots, \gamma_s\}$ be a complete set of representatives of the clusters of roots of $g$ in $\bar{K}$. Let $M_i=K(\gamma_i)$ for $1\leq i\leq s$.\smallskip  
 
\begin{enumerate}
     
\item  All $M_i$'s are mutually isomorphic by mapping $\gamma_i$'s to each other. For every $i$, $M_i/K$ is an extension of degree $m$ contained in $\bar{K}$ with $r_K(M_i)=r_K(g)=k$.\smallskip  

\item  By Remark\ \ref{isom}, if for some $i$, $M_i/K$ is obtained by strong cluster magnification from $L_i/K$ then for each $1\leq j\leq s$, $M_j/K$ is obtained by strong cluster magnification from some subextension  $L_j/K$. \smallskip  

\item  More precisely, if $L_i=K(\alpha_i)$ for a root $\alpha_i$ of the irreducible polynomial $f$ then, by isomorphism of $M_i$ and $M_j$, we get $L_j=K(\alpha_j)$ where $\alpha_j$ is a root of $f$.\smallskip  

\end{enumerate}   
 
\end{rem}
    
Because of the above remark, if strong cluster magnification holds for some root of $g$, then it holds for every root of $g$. Hence we can work with any root $\gamma$ of $g$ in $\bar{K}$. Let $M=K(\gamma)$. Let $K_g$ be splitting field of $g$ over $K$ inside $\bar{K}$. Let $G'={{\rm Gal}}(K_g/K)$. Let $H'\subset G'$ be subgroup with $K(\gamma)$ as the fixed field that is $H'={{\rm Gal}}(K_g/K(\gamma))$.\medskip

By Thm.\ \ref{criterion}, we get an equivalent criterion for strong cluster magnification of an irreducible polynomial.

\begin{thm}
\label{scm poly}
{\it A polynomial $g$ over $K$ is obtained by nontrivial cluster magnification from an $f$ over $K$
    if and only if ${{\rm Gal}}(K_g/K)\cong A\times B$ for nontrivial groups $A$ and $B$ and (for $\gamma$ as above) ${{\rm Gal}}(K_g/K(\gamma))\cong A'\times 1$ (under the same isomorphism) for a subgroup $A'\subset A$ with $[A:A']>2$ and $f$ is the minimal polynomial for a primitive element of $(K_g)^{A'\times B}$ over $K$. 
}     
\end{thm}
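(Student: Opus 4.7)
The plan is to derive Theorem \ref{scm poly} as a direct translation of Theorem \ref{criterion} through the polynomial--field dictionary set up in Definition (2). The key observation is that since $g$ is irreducible with root $\gamma$, the splitting field $K_g$ coincides with the Galois closure $\tilde{M}$ of $M := K(\gamma)$ in $\bar{K}$. Hence the groups $G' = {\rm Gal}(K_g/K)$ and $H' = {\rm Gal}(K_g/K(\gamma))$ in the polynomial setup match the groups $G'$ and $H'$ from Theorem \ref{criterion} verbatim, so the only additional content is the identification of $f$ with the fixed field $(K_g)^{A' \times B}$.

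For the forward direction, I would assume $g$ is obtained by nontrivial strong cluster magnification from $f$. By Definition (2) this means $M/K$ is obtained by nontrivial strong cluster magnification from $L/K$ with $L = K(\alpha)$ for some root $\alpha$ of $f$. Theorem \ref{criterion} applied to $M/K$ yields $G' \cong A \times B$ with $H' \cong A' \times 1$ and $[A:A'] > 2$. I would then trace through the canonical isomorphism $\lambda \mapsto (\lambda|_{\tilde L}, \lambda|_F)$ of Prop.\ \ref{prop}(3), under which $A = G$, $B = R$, and $A' = H$; the subgroup fixing $L$ inside $G'$ is the set of $\lambda$ with $\lambda|_L = \text{id}$, which corresponds exactly to $H \times R = A' \times B$. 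This identifies $L$ with $(K_g)^{A' \times B}$, and since $f$ is the minimal polynomial of the primitive element $\alpha$ of $L/K$, the required description of $f$ holds.

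For the converse, I would assume the stated Galois-theoretic conditions. Theorem \ref{criterion} then produces the subextension $L := \tilde{M}^{A' \times B} = (K_g)^{A' \times B}$ and shows that $M/K$ is obtained by nontrivial strong cluster magnification from $L/K$. Since $f$ is by hypothesis the minimal polynomial of a primitive element $\alpha$ of $(K_g)^{A' \times B}$ over $K$, we have $L = K(\alpha)$, so Definition (2) immediately lifts the field-theoretic conclusion to the polynomial level.

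The only step requiring care is the identification $L = (K_g)^{A' \times B}$ in the forward direction, which depends on working with the canonical isomorphism $G' \cong G \times R$ from Prop.\ \ref{prop}(3) rather than an arbitrary abstract one. Once this identification is pinned down, the rest is a mechanical translation, as Theorem \ref{criterion} has already absorbed all the substantive Galois-theoretic content.
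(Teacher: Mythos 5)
Your proposal is correct and follows the same route as the paper, which states Thm.\ \ref{scm poly} as a direct consequence of Thm.\ \ref{criterion} via the polynomial--field dictionary (using that $K_g=\tilde M$ for $M=K(\gamma)$ and that magnification for one root of $g$ implies it for all roots). Your extra care in identifying $L=(K_g)^{A'\times B}$ through the canonical isomorphism of Prop.\ \ref{prop}(3) is exactly the detail the paper leaves implicit, so nothing is missing.
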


\begin{defn}\hfill \smallskip

\noindent
    An irreducible polynomial $g$ over $K$ is called polynomial with primitive clusters if it is not obtained by a nontrivial strong cluster magnification over $K$. (this notion occurs in \cite{krithika2023root} as well).
\end{defn}

\begin{ex}
Some simple examples / cases of polynomials with primitive clusters $g$ over $K$ follow from Thm.\ \ref{Vanchinathan} and Thm.\ \ref{scm poly}.
\begin{enumerate}

\smallskip \item ${\rm deg}\ g=4$  or a prime $p>2$.

\smallskip    \item $|{{\rm Gal}}(K_g/K)|=4$. 

\smallskip \item  ${{\rm Gal}}(K_g/K)$ is not a direct product of two nontrivial groups. In particular, the case when ${{\rm Gal}}(K_g/K)$ is simple. In particular, the case $|{{\rm Gal}}(K_g/K)|$ is a prime $p >2$.

    
\end{enumerate}
    
\end{ex}
\smallskip

\subsection{Weak cluster magnification}\hfill

\begin{defn}\hfill \smallskip

\noindent
    $M/K$ is said to be obtained by weak cluster magnification from a subextension $L/K$ if $r_K(L)|r_K(M)$. We call $d=r_K(M)/r_K(L)$ as the magnification factor. The magnification is called trivial if $d=1$ and nontrivial otherwise.
\end{defn}
\smallskip

\begin{rem}
    From Def \ref{SCM} and Thm.\ \ref{Vanchinathan}, if $M/K$ is obtained by strong cluster magnification from $L/K$ then $M/K$ is obtained by weak cluster magnification from $L/K$. This justifies the word `weak' in above definition.
\end{rem}
\smallskip

\begin{ex}
  Consider $M/L/K$ where $M/K$ is Galois and $L/K$ is not Galois (a particular case is $M=\tilde{L}$ for $L/K$ not Galois). So $r_K(L)\neq [L:K]$ and $r_K(M)=[M:K]$. Now $r_K(L)~|~[L:K]$ and $[L:K]~|~[M:K]$. Hence $r_K(L)~|~r_K(M)$ and $r_K(L)\neq r_K(M)$. Thus $M/K$ is obtained by nontrivial weak cluster magnification from $L/K$. \smallskip

   We claim that $M/K$ is not obtained by strong cluster magnification from $L/K$. Assume the contrary. Then we must have by Def.\ \ref{SCM} and Lem.\ \ref{jacob} that $M\cap \tilde{L}=L$ which is a contradiction since $L/K$ is not Galois. Moreover $M/K$ is not obtained by strong cluster magnification of $L_1/K$ for any $\tilde{L} \supsetneq L_1 \supset L$.

\end{ex}
\smallskip

\begin{ex}\label{j=n-k}

 Let $f$ over $K$ be irreducible of deg $n$ with Galois group ${\mathfrak S}_n$ with roots $\alpha_i\in \bar{K}$ for $1\leq i\leq n$. For $1 \leq k \leq n-2$, let $L_k=K(\alpha_1,\dots, \alpha_k)$. Then for $j<k$, $L_k/K$ is obtained by nontrivial weak cluster magnification from $L_j/K$ but is not obtained by nontrivial strong cluster magnification from $L_j/K$ since $\tilde{L_{j}}\cap L_k=L_k\neq L_j$.\smallskip

 One can also verify that $[L_k:L_j]=r_K(L_k)/r_K(L_j)\iff k > n/2 \ \text{and}\ j=n-k$.
    
\end{ex}

\medskip

\begin{ex}
    Let $K=\mathbb{Q}$ and $\xi_n$ be $n$-th primitive root of unity. 
    
    \begin{enumerate}
        \item  Let $M=\mathbb{Q}(\xi_{2^k})$ and $L=\mathbb{Q}(\xi_{2^{k-1}})$ for $k\geq 4$. Now $M/K$, $L/K$ and $M/L$ are Galois. Hence $M/K$ is obtained by nontrivial weak cluster magnification from $L/K$ with magnification factor $[M:L]=2$. Also ${\rm Gal}(M/K)\cong \mathbb{Z}/2 \mathbb{Z}\times \mathbb{Z}/(2^{k-2}) \mathbb{Z}$ and ${\rm Gal}(L/K)\cong \mathbb{Z}/2 \mathbb{Z}\times \mathbb{Z}/(2^{k-3}) \mathbb{Z}$. By uniqueness in structure theorem for finite abelian groups ${\rm Gal}(M/K)\not \cong \mathbb{Z}/2\mathbb{Z} \times {\rm Gal}(L/K)$. Hence by Corollary \ref{galois}, we conclude that $M/K$ is not obtained by nontrivial strong cluster magnification from $L/K$.\smallskip

        We can use a similar argument as above to conclude that
      for integers $k>j\geq 3$ for prime $p=2$ and integers $k>j\geq 2$ for prime $p\neq 2$, $M=\mathbb{Q}(\xi_{p^k})$ is not obtained by nontrivial strong cluster magnification from $L=\mathbb{Q}(\xi_{p^j})$.\smallskip

        \item Let $n, l$ be integers such that $6 < l < n$, $l | n$ such that $n = l m$ and $\text{gcd}(l,m)=1$.
     Let $M=\mathbb{Q}(\xi_{n})$ and $L=\mathbb{Q}(\xi_l)$. Since gcd$(l,m)=1$, we have $v_p(m)=v_p(n)$ for $p|m$. We have \smallskip
\[
\begin{split} {\rm Gal}(M/K) & \cong 
\prod \limits_{p|n}\ (\mathbb{Z}/p^{v_p(n)}\mathbb{Z})^{\times} =
 \prod \limits_{p | l}\ (\mathbb{Z}/p^{v_p(n)} \mathbb{Z})^{\times}\  \times \prod \limits_{p | m}\ (\mathbb{Z}/p^{v_p(n)} \mathbb{Z})^{\times} \\
& \cong  {\rm Gal}(L/K)\ \times \prod \limits_{p|m}\ (\mathbb{Z}/p^{v_p(n)} \mathbb{Z})^{\times}.
\end{split}
 \]
        
        \noindent  Hence by Corollary \ref{galois}, $M/K$ is obtained by nontrivial strong cluster magnification from $L/K$ through $F/K$ where $F= \mathbb{Q}(\xi_{m}) = \mathbb{Q}(\xi_{n/l})$.

        \end{enumerate}
\end{ex}
\medskip

\section{Cluster Towers}
\label{cluster towers} 

\subsection{Cluster tower of a polynomial} \hfill

Let $f$ be an irreducible polynomial over $K$. Consider a complete set of representatives of clusters of roots of $f$ in $\bar{K}$. Let $(\beta_1, \beta_2,\ldots, \beta_s)$ be an ordering of this set where $s=s_K(f)$. Now consider the following cluster tower of fields
terminating at the splitting field $K_f$. 
\smallskip

Write the tower as 
\[ K \subseteq K(\beta_1) \subseteq K(\beta_1, \beta_2) \subseteq \dots \subseteq K(\beta_1, \beta_2, \ldots , \beta_s) = K_f.\]

In \cite{krithika2023root}, the notions of degree sequence and length of such tower are defined as follows.
\smallskip

The length of tower is number of distinct fields in the tower and the degrees of these distinct fields over $K$ form the degree sequence.
\smallskip

\smallskip

\begin{ex} \label{Sn example}
As noted in \cite{krithika2023root}, if the Galois group of $f$ over $K$ is ${\mathfrak S}_n$ for $n>2$, we have $s=n$ and the cluster tower is given by 
\[ 
K \subsetneq K(\beta_1) \subsetneq K(\beta_1, \beta_2) \subsetneq  \dots \subsetneq K(\beta_1, \beta_2, \ldots , \beta_{n-1})= K(\beta_1, \beta_2, \ldots , \beta_{n}) = K_f, \]
 with degree sequence
$ (n, n(n - 1), n(n - 1)(n - 2), \ldots, n!/2, n!) =
 (^nP_1, ^nP_2 , \ldots, ^nP_{n-1})
$. So in this case the degree sequence is independent of ordering of $\beta_i$
’s. The length of the tower is $n$.

\end{ex}

\medskip

\noindent {\bf An important example:} A question was asked by Krithika and Vanchinathan in \cite{krithika2023root}: Is the degree sequence in general independent of the ordering of the representatives of the clusters of roots? We describe the following example that answers this question negatively.\smallskip

First we mention some easy to verify properties of Euler's totient function $\phi$.\smallskip

\begin{prop}
\label{Euler}
\hfill \medskip

{\it \noindent Suppose $l$ and $n$ are positive integers such that $l |n$ with $n = lm$. Consider their prime factorisations $n=\prod\limits_{p} p^{v_p(n)}$ and $l=\prod\limits_{p}  p^{v_p(l)}$ with $v_p(l)\leq v_p(n)$ for every prime $p$ (here $v_p$ is usual $p$-adic valuation). Then

\begin{enumerate} 
    \item $\phi (n)/ \phi (l)= m \phi(k) / k$ where $k=\prod\limits_{p\nmid l} p^{v_p(n)}$.\smallskip

\item $k|m$ and hence, $\phi (l) | \phi (n)$.\smallskip

\item $\phi(n)=\phi(l)$ if and only if $n=l$, or $l$ is odd and $n=2l$. \smallskip

\item $\phi (n)/ \phi (l)=m$ if and only if $n$ and $l$ have same prime factors.

 \end{enumerate}
 }
\end{prop}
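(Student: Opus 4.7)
The plan is to work directly from the multiplicative prime-power formula
\[ \phi(N) = \prod_{p \mid N} p^{v_p(N)-1}(p-1), \]
splitting the product over $n$ into the primes that already divide $l$ and the primes that do not. Since $l \mid n$, for every prime $p$ with $v_p(l) \geq 1$ we have $v_p(n) \geq v_p(l)$, and for every prime $p$ with $v_p(l) = 0$ the exponent $v_p(n)$ may be zero or positive. This partition matches exactly the factorisations $m = n/l = \prod_{p \mid l} p^{v_p(n)-v_p(l)} \cdot \prod_{p \nmid l,\, p \mid n} p^{v_p(n)}$ and $k = \prod_{p \nmid l} p^{v_p(n)} = \prod_{p \nmid l,\, p \mid n} p^{v_p(n)}$, so $k$ is precisely the ``$l$-coprime part'' of $n$, and the quotient $m/k = \prod_{p \mid l} p^{v_p(n)-v_p(l)}$ is an integer. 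That observation already establishes the first half of (2), namely $k \mid m$.

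For (1), I would form the ratio
\[ \frac{\phi(n)}{\phi(l)} = \prod_{p \mid l} p^{v_p(n)-v_p(l)} \cdot \prod_{p \nmid l,\, p \mid n} p^{v_p(n)-1}(p-1), \]
because the factor $(p-1)$ cancels on all primes dividing $l$ (which necessarily divide $n$), and factor out $\prod_{p \nmid l,\, p \mid n} p^{v_p(n)}$ to rewrite the second product as $k \cdot \phi(k)/k = \phi(k)$, while the remaining factor $\prod_{p \mid l} p^{v_p(n)-v_p(l)} = m/k$. This yields $\phi(n)/\phi(l) = (m/k)\phi(k) = m\phi(k)/k$, which is (1). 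The second half of (2), that $\phi(l) \mid \phi(n)$, is immediate since $k \mid m$ makes $(m/k)\phi(k)$ an integer.

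For (3) and (4), I would use (1) and (2) to pull the problem back to $k$ and $\phi(k)$. For (4): $\phi(n)/\phi(l) = m$ holds iff $\phi(k) = k$, which happens iff $k = 1$, iff no prime divides $n$ without dividing $l$; combined with $l \mid n$ this is exactly the statement that $n$ and $l$ share the same prime factors. For (3): write $m = k t$ with $t \in \mathbb{Z}_{\geq 1}$ by (2), so $\phi(n)/\phi(l) = t \phi(k)$, and $\phi(n) = \phi(l)$ forces $t = \phi(k) = 1$. Now $\phi(k) = 1$ forces $k \in \{1, 2\}$: the case $k = 1$ gives $m = 1$, i.e.\ $n = l$; the case $k = 2$ forces $2 \nmid l$ (so $l$ is odd) and $v_2(n) = 1$, while $t = 1$ gives $m = k = 2$, i.e.\ $n = 2l$. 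Conversely both exceptional cases are verified by a direct computation of $\phi$.

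No step here is a genuine obstacle — each assertion is a short prime-by-prime book-keeping argument — but the only point requiring some care is keeping track of which primes contribute the $(p-1)$ factors when one passes from $\phi(n)$ to $\phi(l)$, i.e.\ recognising that a prime $p \mid l$ contributes a cancelling $(p-1)$, while a prime $p \nmid l$ with $p \mid n$ contributes a surviving $(p-1)$ that is absorbed into $\phi(k)$.
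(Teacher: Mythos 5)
Your argument is correct: splitting $\phi(n)/\phi(l)$ over the primes dividing $l$ versus the primes dividing $n$ but not $l$ gives exactly the identity in (1), and parts (2)--(4) follow as you derive them (in particular the case analysis $\phi(k)=1 \iff k\in\{1,2\}$ for (3) and $\phi(k)=k \iff k=1$ for (4) are both handled correctly). The paper states this proposition without proof, describing it as easy to verify, and your prime-by-prime computation is precisely the routine verification that is intended there, so there is nothing to reconcile.
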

\smallskip

\begin{ex}

\label{cluster tower example}
 Let $n\geq 6$.  Fix $\bar{\mathbb{Q}}$ to be an algebraic closure of $\mathbb{Q}$. Fix $b$ to be a primitive $n$-th root 
of unity in $\bar{\mathbb{Q}}$. Let $c$ be a positive rational number such that $f=x^n-c$ is
  an irreducible polynomial over $\mathbb{Q}$. (In particular, $c = p$, a prime works for any $n$ by Eisenstein criterion). Let $a = c^{1/n}$ be the positive real root of $f$. Hence the roots of $f$ are given by $a, ab, \ldots, ab^j, ab^{j+1},\ldots, ab^{n-1}$. We observe that, when $n$ is odd, $r=1, s=n$. When $n$ is even, the roots appear in pairs $\{ \alpha, -\alpha\}$ and thus $r=2,s=n/2$.\medskip

By \cite[Prop. 1 and Thm.\ A ]{jacobson1990galois}, the Galois group $G$ of splitting field of $f$ is isomorphic to $\mathbb{Z}/n \mathbb{Z} \rtimes (\mathbb{Z}/n \mathbb{Z})^{\times}$  if and only if $n$ is odd or, $n$ is even with $\sqrt{c}\not \in \mathbb{Q}(b)$  if and only if  $\mathbb{Q}(c^{1/n})\cap \mathbb{Q}(b)=\mathbb{Q}$. Assume $n$ to satisfy these conditions. Hence, in particular the order of $G$ is $n\phi(n)$. \medskip

Further, assume that $n$ is composite and $l|n$ with $n = ml$, where for the cases $n$ odd or $n\equiv 0\ (\text{mod}\ 4)$, we assume $2<l<n$; and for the case $n\equiv 2\ (\text{mod}\ 4)$ we assume $2<l<n/2$.  Because of our assumptions, $ab^{m}\neq \pm a, \pm ab$ and $1<\phi(l)<\phi(n)$ by (3) of Prop.\ \ref{Euler}. Also, $\phi (l) | \phi (n)$ by (2) of Prop.\ \ref{Euler}. Since $\mathbb{Q}(b)/\mathbb{Q}$ is Galois, by Thm.\ 2.6 in \cite{conrad2023galois}, we get 
\[{{\rm Gal}} (\mathbb{Q} (a, b^{m})/\mathbb{Q}(a))\cong {{\rm Gal}} (\mathbb{Q} (b^{m})/\mathbb{Q})\cong (\mathbb{Z}/l \mathbb{Z})^{\times}.\]
(For example, $c=2,n=9,l=3$ and $c=3,n=8,l=4$ work.)\smallskip

   Because of our assumptions, we have $m +1\leq s$. Let the representatives $\beta_i$s of clusters of roots of $f$, for $1\leq i\leq s$ be given by
   \[ \beta_3=ab^{m},\ \beta_{m+1}=ab^2,  \ \text{and}\ \beta_i=ab^{i-1}\ \text{for} \ i\neq 3,m+1  \  .\]
   
    Consider the following cluster towers:

    $$\mathbb{Q}\subsetneq \mathbb{Q} (\beta_1)\subsetneq \mathbb{Q}(\beta_1,\beta_2)=\mathbb{Q}_f, $$ 
with degrees $n$ and $n\phi(n)$ and length of tower $=3$ and
    $$\mathbb{Q}\subsetneq \mathbb{Q}(\beta_1)\subsetneq \mathbb{Q}(\beta_1,\beta_3) \subsetneq \mathbb{Q}(\beta_1,\beta_3, \beta_2)=\mathbb{Q}_f,$$ 
    with degrees $n, n\phi(l)$ and $n\phi(n)$ and length of tower $=4$.\medskip

    This example shows us that not only the degree sequence is not independent of the ordering of the $\beta_i$’s but length of tower is also not independent of the ordering of the $\beta_i$’s.

\end{ex}

  
\medskip

\subsection{Group theoretic formulation of cluster towers} \label{group-cluster-tower} \hfill

 Let the notations be as earlier in Sec.\ \ref{sec-SCM}. Let 
 $(\beta_1, \beta_2,\dots, \beta_s)$ be a fixed ordering of a complete set of representatives of the clusters of roots
of an irreducible polynomial $f$ over $K$ in $\bar{K}$, where $s = s_K(f)$. We have the cluster tower: 
\[ K \subset K(\beta_1) \subset K(\beta_1, \beta_2) \subset \dots \subset K_f.\]

Let $G={\rm Gal}(K_f/K)$. For each $1 \leq i \leq s$, let $H_i$ be the subgroup of $G$ that fixes $K(\beta_i)$. Let $\beta_1=\beta$ and $H_1=H$. Let $\sigma_i$ be isomorphism from $K(\beta)$ to $K(\beta_i)$ mapping $\beta$ to $\beta_i$ (hence $\sigma_1=id$). Then $H_i=\sigma_i H \sigma_i^{-1}$.\smallskip

Let $K_m=K(\beta_1,\beta_2,\ldots, \beta_m)$ and  $J_m: =(\bigcap \limits_{1 \leq i \leq m}\  H_i)$ be 
the subgroup of $G$ that fixes $K_m$.
\smallskip

Let  $m_1< m_2< \dots < m_l$ be all the indices $i > 1$ such that $J_i\neq J_{i-1}$. The length of above cluster tower is $l+2$.
Here, $m_l$ is smallest index $i$ such that $K_{i}=K_f$. \smallskip
 
 The degree sequence is $a_0 = n, a_1, a_2,\ldots, a_{l} = |G|$ with $a_i=[G:J_{m_{i}}]$ for all $i \geq 1$. \smallskip
 
Let $m_0=1$. Now for $i\geq 1$, $\dfrac{a_i}{a_{i-1}}=[K_{m_i}: K_{m_{i-1}}]$ and $K_{m_{i-1}}= K_{m_i -1}$ and number of roots of $f$ contained in $K_{m_i - 1}$ is $\geq (m_i -1)r$ (where $r=r_K(f)$). Also $K_{m_i}=K_{m_i -1}(\beta_{m_i})$. \smallskip

Thus $\dfrac{a_i}{a_{i-1}}= [K_{m_i}: K_{m_i -1}] \leq \left( n-(m_{i}-1)r \right)$ for all $i\geq 1$. Hence we have 
 
 \[ |G|\leq n\ \prod \limits_{1\leq i\leq l}\ \left( n-(m_{i}-1)r \right).\]
 
\begin{rem}
We have $a_1=n(n-1) \implies r_K(f)=1$. The converse is not true. Consider Example \ref{cluster tower example} for $n$ odd and composite. Then $r_K(f)=1$
and $a_1\leq n\phi(n)< n(n-1)$.
 \end{rem}

  Now since $H_i=\sigma_i H \sigma_i^{-1}$ for all $i$, we have that $N_G(H_i)=\sigma_i N_G(H) \sigma_i^{-1}.$ From 
 Thm.\ \ref{Perlis} (3), we have $r_K(K_m)=[N_G(J_m):J_m]$.
\medskip

\begin{ex} Let $f$ be an irreducible polynomial such that $|G|=np$ and $|H|=p$ where $p\nmid n$. Then length of cluster tower is $3$ and degree sequence is $n, np$. Both degree sequence and length of cluster tower are independent of the ordering of the $\beta_i$’s. As a particular case, $G = \mathfrak A_4$ for a degree-$4$ irreducible polynomial $f$.
\end{ex}

\section{Root Capacity}\label{root capacity}

\subsection{The group of automorphisms of finite extensions}
\hfill

Let $ F_1/F_2$ be a finite extension of fields. Let ${\rm Aut}(F_1/F_2)$ denote the group of $F_2$-automorphisms of $F_1$. In this section, we describe some of the facts about this group and later use it to prove some results about root clusters.\smallskip

\begin{prop} 
\label{r divide}
\hfill \medskip

{\it \noindent
Let $L/K$ and $M/L$ be extensions. Then \smallskip

    \begin{enumerate}
         \item ${\rm Aut}(M/L)$ is a subgroup of ${\rm Aut}(M/K)$. Hence, $r_L(M)~|~r_K(M)$.\smallskip

        \item Suppose $\sigma|_L \in {\rm Aut}(L/K)$ for any $\sigma \in {\rm Aut}(M/K)$.
         Then $ {\rm Aut}(M/L)\ \unlhd \ {\rm Aut}(M/K)$ and $r_K(M) ~|~ (r_L(M)\ r_K(L))$. \smallskip

        \item  Suppose $\sigma|_L \in {\rm Aut}(L/K)$ for any $\sigma \in {\rm Aut}(M/K)$. Then any $\lambda\in {\rm Aut}(L/K)$ can be extended to $\tilde{\lambda}\in {\rm Aut}(M/K)$ $\iff$ $r_K(M)= r_L(M) r_K(L)$. In this case $M/K$ is obtained by weak cluster magnification of $L/K$ with magnification factor $r_L(M)$.
         
    \end{enumerate}
    }
\end{prop}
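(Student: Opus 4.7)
The plan is to translate every cluster-size statement into a statement about automorphism groups by invoking Theorem \ref{Perlis}(3), which via the primitive element theorem yields $r_K(M) = |{\rm Aut}(M/K)|$, $r_K(L) = |{\rm Aut}(L/K)|$, and $r_L(M) = |{\rm Aut}(M/L)|$. Once this translation is made, the entire proposition reduces to basic facts about a single restriction homomorphism.

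For part (1), I would simply observe that any $\sigma \in {\rm Aut}(M/L)$ fixes $L$ pointwise, hence fixes $K \subseteq L$ pointwise, so $\sigma \in {\rm Aut}(M/K)$. The inclusion ${\rm Aut}(M/L) \subseteq {\rm Aut}(M/K)$ then gives $|{\rm Aut}(M/L)| \mid |{\rm Aut}(M/K)|$ by Lagrange, i.e., $r_L(M) \mid r_K(M)$.

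For part (2), the hypothesis that $\sigma(L) = L$ for every $\sigma \in {\rm Aut}(M/K)$ is exactly what is needed to ensure that the restriction map $\phi \colon {\rm Aut}(M/K) \to {\rm Aut}(L/K)$, $\sigma \mapsto \sigma|_L$, is a well-defined group homomorphism. Its kernel is precisely the set of $K$-automorphisms of $M$ that restrict to the identity on $L$, namely ${\rm Aut}(M/L)$, so ${\rm Aut}(M/L) \unlhd {\rm Aut}(M/K)$. By the first isomorphism theorem, $|{\rm Aut}(M/K)| = |{\rm Aut}(M/L)| \cdot |{\rm im}(\phi)|$, and $|{\rm im}(\phi)|$ divides $|{\rm Aut}(L/K)|$. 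Rewriting in terms of cluster sizes, $r_K(M)$ divides $r_L(M)\, r_K(L)$.

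For part (3), the condition that every $\lambda \in {\rm Aut}(L/K)$ extends to some $\tilde\lambda \in {\rm Aut}(M/K)$ is precisely the surjectivity of $\phi$; in view of the factorization $|{\rm Aut}(M/K)| = |{\rm Aut}(M/L)| \cdot |{\rm im}(\phi)|$, this surjectivity is equivalent to $r_K(M) = r_L(M)\, r_K(L)$. When this equality holds, $r_K(L) \mid r_K(M)$ with quotient $r_L(M)$, which is exactly the definition of $M/K$ being obtained by weak cluster magnification from $L/K$ with magnification factor $r_L(M)$. The only subtle point, rather than any genuine obstacle, is recognising that the hypothesis in (2) and (3) is precisely what makes the restriction map a homomorphism into ${\rm Aut}(L/K)$; without it, restrictions merely land in the set of $K$-embeddings $L \hookrightarrow M$ and the kernel-image analysis breaks down.
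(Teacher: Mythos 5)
Your proposal is correct and follows essentially the same route as the paper: translate cluster sizes into orders of automorphism groups via Thm.\ \ref{Perlis}(3), then analyse the restriction homomorphism $\Phi:\mathrm{Aut}(M/K)\to\mathrm{Aut}(L/K)$, whose kernel is $\mathrm{Aut}(M/L)$, whose induced injection of the quotient gives (2), and whose surjectivity is exactly the extension condition in (3).
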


\begin{proof}
     From Thm.\ \ref{Perlis} (3), $r_K(L)=|{\rm Aut}(L/K)|$. 
     Now (1) is easy to see.\smallskip

Proof of (2):
Suppose we have $\sigma|_L \in {\rm Aut}(L/K)$ for any $\sigma \in {\rm Aut}(M/K)$. Then we can define the a homomorphism $\Phi : {\rm Aut}(M/K)\rightarrow {\rm Aut}(L/K)$ by mapping $\sigma$ to $\sigma|_L$. Hence, ${\rm ker}(\Phi)={\rm Aut}(M/L)$ and so ${\rm Aut}(M/L)\unlhd {\rm Aut}(M/K)$. Also, ${\rm Aut}(M/K)/{\rm Aut}(M/L)\xhookrightarrow{}{\rm Aut}(L/K)$. Thus 
$r_K(M)~ | ~(r_L(M)\ r_K(L))$.\smallskip

Proof of (3): This corresponds to the homomorphism $\Phi$ being surjective.
\end{proof}
\smallskip

\begin{rem}
    The homomorphism $\Phi$ in above proof is not surjective in general. For example, when $K=\mathbb{Q}, L=\mathbb{Q}(\sqrt[]{2}), M=\mathbb{Q}(\sqrt[4]{2})$, then $\Phi$ is not surjective.
\end{rem}
\smallskip

\begin{prop}\label{N Aut}
\hfill \medskip

{\it \noindent
    Consider extensions $L/K$ and $M/L$. Then for $\sigma \in {\rm Aut}(M/K)$ we have 
    \[ \sigma \in N_{{\rm Aut}(M/K)}({\rm Aut}(M/L)) \iff \sigma|_{M^{{\rm Aut}(M/L)} } \in {\rm Aut}(M^{{\rm Aut}(M/L)}/K).\]
     Hence also 
    \[N_{{\rm Aut}(M/K)}({\rm Aut}(M/L))/ {\rm Aut}(M/L) \xhookrightarrow{}{\rm Aut}(M^{{\rm Aut}(M/L)}/K).\]
    }
\end{prop}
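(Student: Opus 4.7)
The plan is to set $A := {\rm Aut}(M/L)$ and $F := M^{A}$, the fixed field of $A$ in $M$. Then $L \subseteq F \subseteq M$, and by Artin's theorem, $M/F$ is Galois with Galois group exactly $A$; equivalently, $A = {\rm Aut}(M/F)$. The proposition then amounts to saying that an element $\sigma \in {\rm Aut}(M/K)$ normalises $A$ if and only if it stabilises $F$ setwise, and the embedding comes from restriction to $F$ with kernel $A$.

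For the forward direction ($\Rightarrow$), suppose $\sigma \in N_{{\rm Aut}(M/K)}(A)$, so $\sigma A \sigma^{-1} = A$. For any $x \in F$ and any $\tau \in A$, the element $\sigma^{-1} \tau \sigma \in A$ fixes $x$, which forces $\tau(\sigma(x)) = \sigma(x)$; since $\tau$ is arbitrary in $A$, $\sigma(x) \in F$. Thus $\sigma(F) \subseteq F$, and by applying the same argument to $\sigma^{-1}$ we get equality, so $\sigma|_F \in {\rm Aut}(F/K)$.

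For the converse ($\Leftarrow$), assume $\sigma(F) = F$. For $\tau \in A$ and $y \in F$, the element $\sigma^{-1}(y)$ lies in $F$ and is hence fixed by $\tau$, so $\sigma\tau\sigma^{-1}(y) = \sigma(\sigma^{-1}(y)) = y$. Therefore $\sigma A \sigma^{-1}$ fixes $F$ pointwise and is contained in ${\rm Aut}(M/F) = A$; finiteness of $A$ then upgrades containment to equality, giving $\sigma \in N_{{\rm Aut}(M/K)}(A)$. The displayed embedding is then immediate: restriction to $F$ defines a homomorphism $N_{{\rm Aut}(M/K)}(A) \to {\rm Aut}(F/K)$ whose kernel consists of those $\sigma \in N_{{\rm Aut}(M/K)}(A)$ which fix $F$ pointwise, i.e.\ lie in ${\rm Aut}(M/F) = A$.

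I do not foresee a substantive obstacle; the only point requiring care is the identification $A = {\rm Aut}(M/F)$ via Artin, which is what turns the containment $\sigma A \sigma^{-1} \subseteq {\rm Aut}(M/F)$ in the converse direction into the containment $\sigma A \sigma^{-1} \subseteq A$ needed for normalisation. Everything else is a direct manipulation of fixed fields and a standard first-isomorphism-theorem argument for the embedding.
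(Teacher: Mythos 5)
Your proof is correct and follows essentially the same route as the paper: both hinge on the Artin identification ${\rm Aut}(M/L)={\rm Aut}\bigl(M/M^{{\rm Aut}(M/L)}\bigr)$ and on the restriction homomorphism to $M^{{\rm Aut}(M/L)}$ with kernel ${\rm Aut}(M/L)$. The only cosmetic difference is that the paper proves the converse by contraposition (producing an element of the fixed field moved out of it), whereas you argue directly that $\sigma\,{\rm Aut}(M/L)\,\sigma^{-1}$ fixes the field pointwise and conclude by finiteness.
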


\begin{proof} 
We will mimic Perlis' proof of Thm. \ref{Perlis} (3) (see \cite[first proposition]{perlisroots}).\smallskip

For notational simplicity, (just for this proof) let  $G_1$, $G_2$ denote the groups ${\rm Aut}(M/K)$ and ${\rm Aut}(M/L)$, respectively.\smallskip

If $ \sigma \in N_{G_1}(G_2)$, then $\sigma\ G_2\ \sigma^{-1}= G_2$. Let $x\in M^{G_2}$. Since $\sigma\
    G_2 x = G_2 \sigma x$, we have $\sigma x = G_2 \sigma x$. Hence $\sigma x \in M^{G_2}$. \smallskip
    
    Conversely, suppose $\sigma \not \in N_{G_1}(G_2)$. Then there exists $\lambda \in G_2$ such that $\sigma^{-1} \lambda \sigma \not \in G_2$.  We know ${\rm {\rm Aut}}(M/L)={\rm Aut}(M/M^{G_2})$. Hence there exists $x\in M^{G_2}$ such that $\sigma^{-1} \lambda \sigma x \neq x$. That is $ \lambda \sigma  x \neq \sigma x$. Hence $\sigma x \not \in M^{G_2}$. Thus $\sigma|_{M^{G_2}} \not \in {\rm Aut}(M^{G_2}/K)$.\smallskip

    Then we can define the map $\Phi : N_{G_1}(G_2) \rightarrow {\rm Aut}(M^{G_2}/K)$ by mapping $\sigma$ to $\sigma|_{M^{G_2}}$. Hence, $\ker(\Phi)=G_2$ and also we have $$N_{G_1}(G_2)/ G_2 \xhookrightarrow{}{\rm Aut}(M^{G_2}/K).$$ 
    
    \vspace{-0.8 cm}
    
%
%
%
 \end{proof}

 \smallskip

\begin{cor}
\hfill \medskip

{\it \noindent
    Suppose $M/L$ is Galois. Then we can replace $M^{{\rm Aut}(M/L)}$ by $L$ in above Prop.\
    \ref{N Aut}. If additionally we have ${\rm {\rm Aut}}(M/L) \unlhd\ {\rm Aut}(M/K)$, then $\sigma|_L \in {\rm Aut}(L/K)$ for any $\sigma \in {\rm Aut}(M/K)$. This is converse to Prop.\ \ref{r divide} (2) under the condition $M/L$ is Galois.    }
    
\end{cor}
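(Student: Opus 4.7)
The plan is to reduce both assertions of the corollary directly to Prop.\ \ref{N Aut} by invoking standard Galois theory. First I would prove the replacement $M^{{\rm Aut}(M/L)}=L$ under the hypothesis that $M/L$ is Galois. Since $M/L$ Galois is equivalent (by the fundamental theorem of Galois theory) to the fixed field of ${\rm Aut}(M/L)$ inside $M$ being exactly $L$, this identification is immediate, and allows one to read off every occurrence of $M^{{\rm Aut}(M/L)}$ in Prop.\ \ref{N Aut} as $L$. In particular the embedding $N_{{\rm Aut}(M/K)}({\rm Aut}(M/L))/{\rm Aut}(M/L)\hookrightarrow {\rm Aut}(L/K)$ obtained in Prop.\ \ref{N Aut} becomes an embedding into ${\rm Aut}(L/K)$ directly.

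Next, assume additionally that ${\rm Aut}(M/L)\unlhd {\rm Aut}(M/K)$. Normality means precisely that $N_{{\rm Aut}(M/K)}({\rm Aut}(M/L))={\rm Aut}(M/K)$, so every $\sigma\in{\rm Aut}(M/K)$ lies in this normalizer. Applying the forward implication of Prop.\ \ref{N Aut} (with $M^{{\rm Aut}(M/L)}$ now identified with $L$) gives $\sigma|_L\in{\rm Aut}(L/K)$ for every such $\sigma$, which is exactly the desired conclusion.

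Finally, to justify the parenthetical remark that this is a converse of Prop.\ \ref{r divide} (2), I would just point out that Prop.\ \ref{r divide} (2) showed that the restriction hypothesis $\sigma|_L\in{\rm Aut}(L/K)$ for all $\sigma\in{\rm Aut}(M/K)$ implies ${\rm Aut}(M/L)\unlhd{\rm Aut}(M/K)$ (via the kernel of the restriction map $\Phi$), while we have just proved the reverse implication under the extra assumption that $M/L$ is Galois. No genuine obstacle arises; the content of the corollary is essentially a bookkeeping combination of Prop.\ \ref{N Aut} with the fixed-field characterization of Galois extensions, and the only point requiring any care is making explicit that normality of ${\rm Aut}(M/L)$ in ${\rm Aut}(M/K)$ forces the normalizer to be the whole group so that Prop.\ \ref{N Aut} can be applied to every $\sigma\in{\rm Aut}(M/K)$.
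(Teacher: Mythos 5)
Your proof is correct, and it is essentially the derivation the paper intends when it labels this a corollary of Prop.\ \ref{N Aut}: since $M/L$ is Galois, $M^{{\rm Aut}(M/L)}=L$, and normality of ${\rm Aut}(M/L)$ in ${\rm Aut}(M/K)$ makes the normalizer the whole group, so the forward implication of Prop.\ \ref{N Aut} gives $\sigma|_L\in{\rm Aut}(L/K)$ for every $\sigma$. The only difference from the text is that the paper's written-out proof deliberately avoids Prop.\ \ref{N Aut} (it announces a ``direct proof of the second part''): there one argues via ${\rm Aut}(M/\sigma(L))=\sigma\,{\rm Aut}(M/L)\,\sigma^{-1}={\rm Aut}(M/L)$, notes $|{\rm Aut}(M/\sigma(L))|=[M:L]=[M:\sigma(L)]$ so that $M/\sigma(L)$ is also Galois, and concludes $\sigma(L)=M^{{\rm Aut}(M/\sigma(L))}=M^{{\rm Aut}(M/L)}=L$. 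Your route buys brevity and shows the corollary really is just bookkeeping on top of Prop.\ \ref{N Aut}; the paper's direct argument is self-contained and makes explicit the mechanism (conjugation of automorphism groups plus the Galois degree count) that forces $\sigma(L)=L$. Both are valid, and your remark tying the statement to Prop.\ \ref{r divide} (2) as a converse is accurate.
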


We will also give a direct proof of second part of this corollary without referring to above Prop.\
    \ref{N Aut}.\smallskip
    
\begin{proof}
    Let $\sigma \in {\rm Aut}(M/K)$. Now ${\rm {\rm Aut}}(M/\sigma(L))=\sigma {\rm Aut}(M/L) \sigma^{-1}$. Since ${\rm {\rm Aut}}(M/L)\unlhd {\rm Aut}(M/K)$, we have ${\rm {\rm Aut}}(M/\sigma(L))= {\rm Aut}(M/L)$. Also $|{\rm Aut}(M/\sigma(L))|= |{\rm Aut}(M/L)|=[M:L]$ since $M/L$ is Galois. Since $\sigma(L)$ is isomorphic to $L$, $[M:L]=[M:\sigma (L)]$. Hence, $M/\sigma(L)$ is also Galois. Thus $L=M^{{\rm Aut}(M/L)}=M^{{\rm Aut}(M/\sigma (L))}=\sigma (L)$. Therefore $\sigma|_L \in {\rm Aut}(L/K)$.
\end{proof}

By letting $M=\tilde{L}$ we have the following corollary, i.e., \cite[ first Prop.]{perlisroots}. 

 \begin{cor}
\label{perlis first prop}
 \hfill \medskip

{\it \noindent
     Let $G={\rm Gal}(\tilde{L}/K)$ and $H={\rm Gal}(\tilde{L}/L)$. Then we have $\sigma\in N_G(H) \iff \sigma|_L\in {\rm Aut}(L/K)$. Hence $\sigma H \mapsto \sigma|_L$ defines an isomorphism $N_G(H)/H \rightarrow {\rm Aut}(L/K)$.}
     
     \end{cor}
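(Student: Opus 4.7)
The plan is to obtain this corollary directly as the $M=\tilde L$ specialization of Proposition \ref{N Aut}. Since $\tilde L/K$ is Galois by definition of the Galois closure, one identifies ${\rm Aut}(\tilde L/K)$ with $G={\rm Gal}(\tilde L/K)$; similarly $\tilde L/L$ is Galois, so ${\rm Aut}(\tilde L/L)=H$, and by the fundamental theorem of Galois theory $\tilde L^{H}=L$. With these identifications in place, the characterization given in Prop.\ \ref{N Aut} reads precisely as the desired equivalence $\sigma\in N_G(H)\iff \sigma|_L\in {\rm Aut}(L/K)$.

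For the isomorphism statement, Proposition \ref{N Aut} already supplies the injective homomorphism $\Phi\colon N_G(H)/H \hookrightarrow {\rm Aut}(L/K)$, $\sigma H\mapsto \sigma|_L$. What remains is surjectivity. Given $\lambda\in {\rm Aut}(L/K)$, I would view $\lambda$ as a $K$-embedding of $L$ into $\bar K$; because $\tilde L/K$ is normal, such an embedding extends to a $K$-automorphism $\tilde\lambda$ of $\tilde L$, i.e.\ $\tilde\lambda\in G$. By the equivalence just established, $\tilde\lambda$ automatically lies in $N_G(H)$, and $\Phi(\tilde\lambda H)=\tilde\lambda|_L=\lambda$. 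Hence $\Phi$ is surjective and therefore an isomorphism.

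The whole argument is essentially bookkeeping once Prop.\ \ref{N Aut} is in hand; the only point deserving any care is the extension of $K$-automorphisms of $L$ to $K$-automorphisms of $\tilde L$, which is a standard consequence of the normality of $\tilde L/K$ and poses no genuine obstacle.
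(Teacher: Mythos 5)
Your proposal is correct and follows essentially the same route as the paper, which obtains the corollary simply by setting $M=\tilde L$ in Prop.\ \ref{N Aut} (together with the preceding corollary identifying $M^{{\rm Aut}(M/L)}$ with $L$ when $M/L$ is Galois). The only difference is that you spell out the surjectivity of $\sigma H\mapsto \sigma|_L$ via the extension of $K$-automorphisms of $L$ to $\tilde L$ by normality, a step the paper leaves implicit; this is the standard and correct way to complete the isomorphism claim.
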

     
  \smallskip
\subsection{Root capacity} \hfill

We saw that the cluster size $r_K(\alpha)$ counts the number of roots appearing in the root cluster of $\alpha$ in $K(\alpha)$. We can ask for an analogous quantity associated to an extension $M/K$.\smallskip

\begin{defn}
\label{rho}
\hfill \medskip

\noindent
 Let $\alpha\in \bar{K}$ and let $f$ be minimal polynomial of $\alpha$ over $K$. For an extension $M/K$, let $\rho_K(M,\alpha)$ be the number of roots of $f$ that are contained in $M$. We call this quantity as the root capacity of $M$ with respect to $\alpha$ (with base field $K$ fixed).\smallskip

Let $L/K$ be an extension. By primitive element theorem $L=K(\alpha)$ for some $\alpha\in\bar{K}$. We define $\rho_K(M,L):=\rho_K(M,\alpha)$ which is well defined by the following proposition.  We call this quantity as the root capacity of $M$ with respect to $L$ (with base field $K$ fixed).
\end{defn}

\begin{prop}
\label{elegant}
\hfill \medskip

{\it \noindent
Let $\alpha,\beta \in \bar{K}$ and let $K(\alpha)=K(\beta)$. Then for any $M/K$, we have $\rho_K(M,\alpha)=\rho_K(M,\beta)$.
}
\end{prop}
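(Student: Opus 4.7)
The plan is to parametrize the roots of the minimal polynomials of $\alpha$ and $\beta$ simultaneously by the set of $K$-embeddings of the common field $L := K(\alpha) = K(\beta)$ into $\bar K$, and then argue that membership of a conjugate in $M$ depends only on the embedding, not on which primitive element one reads off.

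Concretely, let $f$ and $g$ be the minimal polynomials of $\alpha$ and $\beta$ over $K$, so that both have degree $n := [L:K]$ and both split into $n$ distinct roots in $\bar K$ (since $K$ is perfect). For any $\sigma \in \mathrm{Hom}_K(L,\bar K)$, the element $\sigma(\alpha)$ is a root of $f$ and the element $\sigma(\beta)$ is a root of $g$; conversely every root of $f$ (respectively of $g$) arises as $\sigma(\alpha)$ (respectively $\sigma(\beta)$) for a unique such $\sigma$. Thus $\sigma \mapsto \sigma(\alpha)$ and $\sigma \mapsto \sigma(\beta)$ set up two bijections between the same indexing set $\mathrm{Hom}_K(L,\bar K)$ and the root sets of $f$ and $g$ respectively.

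The key observation, which does the real work, is the following equivalence for a fixed $\sigma$: one has $\sigma(\alpha)\in M$ if and only if $K(\sigma(\alpha))\subseteq M$ if and only if $\sigma(L)\subseteq M$, using that $\sigma(L)=\sigma(K(\alpha))=K(\sigma(\alpha))$. Exactly the same chain of equivalences applies with $\beta$ in place of $\alpha$, because $\sigma(K(\beta)) = \sigma(L)$ as well. Therefore $\sigma(\alpha)\in M \iff \sigma(L)\subseteq M \iff \sigma(\beta)\in M$, so the two bijections above restrict to bijections between $\{\text{roots of }f\text{ in }M\}$ and $\{\sigma : \sigma(L)\subseteq M\}$ and between $\{\text{roots of }g\text{ in }M\}$ and the same set. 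Counting gives $\rho_K(M,\alpha)=\rho_K(M,\beta)$.

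There is no real obstacle: the entire content is the identification $\sigma(K(\alpha))=\sigma(K(\beta))$, which is immediate from $K(\alpha)=K(\beta)$. The only point to be slightly careful about is that a single root $\sigma(\alpha)$ lying in $M$ forces the whole image $\sigma(L)$ to lie in $M$ (since $\sigma(L) = K(\sigma(\alpha))$), which is precisely why the condition on the embedding is intrinsic to $L$ and not to the choice of generator.
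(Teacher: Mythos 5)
Your proof is correct, but it takes a different route from the paper's. The paper argues entirely with explicit polynomials: writing $\beta=\mu(\alpha)$ and $\alpha=\lambda(\beta)$ with $\mu,\lambda\in K[x]$, it shows $f\mid g\circ\mu$ and $f(x)\mid(\lambda\circ\mu(x)-x)$, deduces that $\mu$ restricts to a bijection from the roots of $f$ to the roots of $g$, and then concludes $\alpha_i\in M\iff\beta_i\in M$ because each of $\beta_i=\mu(\alpha_i)$ and $\alpha_i=\lambda(\beta_i)$ is a polynomial expression over $K$ in the other. You instead parametrize both root sets by ${\rm Hom}_K(L,\bar K)$ and observe that $\sigma(\alpha)\in M\iff K(\sigma(\alpha))=\sigma(L)\subseteq M$, a condition intrinsic to the embedding $\sigma$ and not to the chosen generator; this is a valid and cleaner conceptual argument (in effect the same bijection, since the root correspondence $\alpha_i\mapsto\mu(\alpha_i)$ is exactly ``same embedding, different generator''). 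What your version buys is the identification $\rho_K(M,L)=|\{\sigma\in{\rm Hom}_K(L,\bar K):\sigma(L)\subseteq M\}|=|{\rm Hom}_K(L,M)|$, which makes the independence of the generator manifest and connects directly to the later description of $\rho_K(M,L)$ as $r_K(L)$ times the number of subfields of $M\cap\tilde L$ isomorphic to $L$ over $K$ (Prop.\ \ref{a}); what the paper's version buys is a completely elementary argument using only divisibility of polynomials, with no appeal to embeddings or the separable-degree count $|{\rm Hom}_K(L,\bar K)|=[L:K]$ (though both proofs ultimately rely on perfectness of $K$ for the distinctness of the roots).
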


\begin{proof}
    Let the degree-$n$ minimal polynomials of $\alpha$ and $\beta$ over $K$ be $f$ and $g$ respectively. Let $\{\alpha_i\}_{1\leq i \leq n}$ and $\{\beta_i\}_{1\leq i \leq n}$ be all the roots of $f$ and $g$ in $\bar{K}$ with $\alpha_1=\alpha, \beta_1=\beta$. Since $K(\alpha)=K(\beta)$, we have polynomials $\mu$ and $\lambda$ over $K$ with degrees $\leq (n-1)$ such that $\alpha=\lambda (\beta)$ and $\beta = \mu (\alpha)$. Now, $g(\beta)=0$. Hence $g(\mu (\alpha))=0$. Thus $f|g\circ \mu$. Hence $g(\mu (\alpha_i))=0$ for all $i$. Hence each $\mu (\alpha_i)=\beta_j$ for some $j$. \smallskip
    
    We also have $\alpha=\lambda \circ \mu (\alpha)$. Hence $f(x)| (\lambda \circ \mu (x)-x)$. Thus $\lambda \circ \mu (\alpha_i)=\alpha_i$ for all $i$. Hence $\mu$ is a bijection from the set of roots of $f$ to set of roots of $g$. By relabelling we can assume $\mu(\alpha_i)=\beta_i$ for all $i$. Thus $\lambda(\beta_i)=\alpha_i$ for all $i$. Hence for any $i$, we have $\alpha_i\in M \iff \beta_i\in M$. 
        \end{proof}

\begin{rem}
   Let $\alpha$ and $f$ be as in Def.\ \ref{rho}. Now, $\rho_K(K(\alpha),\alpha)=r_K(f)$. Thus the above proposition proves that given an extension $L/K$, the cluster size is the same for all irreducible polynomials that are the minimal polynomials of primitive elements of $L$ over $K$ (Corollary 1 in \cite{krithika2023root}). This proof is independent of Thm.\ \ref{Perlis} (3).
\end{rem}

\smallskip We prove some properties of root capacity.\smallskip

\begin{prop}
\label{rho properties}
\hfill \medskip

{\it \noindent
Let $M/K$ be extension of $L/K$. Then the following hold.
\smallskip

    \begin{enumerate}
   \item  $\rho_K(L,K)=1$ and $\rho_K(L,L)=r_K(L)$ and $\rho_K(M,L)\geq r_K(L)$ and $\rho_K(\tilde{L}, L)=[L:K]$.
\smallskip
       
  \item $\rho_K(M,L)= r_K(L)\implies$ $\sigma|_L \in {\rm Aut}(L/K)$ for any $\sigma \in {\rm Aut}(M/K)$.
    \end{enumerate}
    }
\end{prop}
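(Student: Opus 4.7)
For part (1), the strategy is to unwind Definition \ref{rho} directly in each of the four claims. For $\rho_K(L,K)=1$, I would take a primitive element for the trivial extension $K/K$, namely any element of $K$; its minimal polynomial over $K$ is linear and hence has a single root, which is visibly contained in $L$. The equality $\rho_K(L,L)=r_K(L)$ is essentially a restatement of the definition of cluster size: writing $L=K(\alpha)$ with minimal polynomial $f$ of $\alpha$ over $K$, the number of roots of $f$ lying in $K(\alpha)$ is exactly $r_K(f)=r_K(L)$. The inequality $\rho_K(M,L)\geq r_K(L)$ is then immediate from $L\subseteq M$: every root of $f$ that sits in $L$ automatically sits in $M$, so $\rho_K(M,L)\geq \rho_K(L,L)=r_K(L)$. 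Finally, for $\rho_K(\tilde L,L)=[L:K]$, the Galois closure $\tilde L$ is by construction the splitting field of $f$, so all $\deg(f)=[L:K]$ distinct roots of $f$ lie in $\tilde L$.

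For part (2), the core observation is the following cardinality comparison. The set of roots of $f$ lying in $L$ has size $r_K(L)$, the set of roots of $f$ lying in $M$ has size $\rho_K(M,L)$, and the former is contained in the latter because $L\subseteq M$. Under the hypothesis $\rho_K(M,L)=r_K(L)$, these two finite sets therefore coincide: every root of $f$ in $M$ already lies in $L$. Now, given any $\sigma\in {\rm Aut}(M/K)$, the element $\sigma(\alpha)$ is a root of $f$ (since $\sigma$ fixes $K$ pointwise) and belongs to $M$; by the previous sentence it lies in $L$. Hence $\sigma(L)=K(\sigma(\alpha))\subseteq L$, and since $[\sigma(L):K]=[L:K]$ this inclusion is an equality, which gives $\sigma|_L\in {\rm Aut}(L/K)$.

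I do not anticipate a serious obstacle, as both parts follow directly from Definition \ref{rho} together with elementary facts about minimal polynomials and $K$-automorphisms. The only subtlety worth highlighting is that $\rho_K(M,L)$ is well-defined for the extension $L/K$ (and not merely for a choice of primitive element) precisely because of Prop.\ \ref{elegant}, which is what allows the clean comparison $\rho_K(M,L)\geq \rho_K(L,L)=r_K(L)$ used in both parts.
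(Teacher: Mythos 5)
Your proposal is correct and follows essentially the same route as the paper: part (1) is exactly the direct unwinding of Definition \ref{rho} that the paper dismisses as easy, and part (2) is the paper's argument verbatim in substance — the hypothesis forces the roots of $f$ in $M$ to coincide with those in $L$, so any $\sigma\in{\rm Aut}(M/K)$ sends $\alpha$ to a root lying in $L$, whence $\sigma|_L\in{\rm Aut}(L/K)$. Your extra remarks (degree count for $\sigma(L)=L$, and well-definedness via Prop.\ \ref{elegant}) are fine refinements, not a different method.
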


\begin{proof}
The property (1) is easy to see.\smallskip
   
   Proof of (2): Let $L=K(\alpha)$ for $\alpha\in \bar{K}$ with $f$ as the minimal polynomial of $\alpha$ over $K$ with roots $\{\alpha_i\}_{1\leq i \leq n}$. Then $\rho_K(M,L)= r_K(L)$ is equivalent to the statement \quad $ \alpha_i\in M \iff \alpha_i\in L$. If $\sigma \in {\rm Aut}(M/K)$, then $\sigma|_L$ maps $\alpha$ to an $\alpha_i\in M$. Hence, $\sigma|_L (\alpha)\in L$. Thus $\sigma|_L \in {\rm Aut}(L/K)$. 
    \end{proof}

\smallskip

The following lemma is reformulation of \cite[Thm.\ 2] {perlis2004roots} for a  perfect base field $K$.

\begin{lem}
\label{perlis rho}

 {\it   Let $\alpha\in \bar{K}$ and let $f$ be minimal polynomial of $\alpha$ over $K$. Then for any $M/K$ we have $r_K(f)|\rho_K(M,\alpha)$. Hence for any $L/K$, we have $r_K(L)|\rho_K(M,L)$. That is $\rho_K(M,L)=a. r_K(L)$ where $0\leq a \leq s_K(L)$.}
\end{lem}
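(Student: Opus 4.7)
The plan is to exhibit the set of roots of $f$ contained in $M$ as a disjoint union of full root clusters of $f$, each of cardinality $r_K(f)$; the divisibility $r_K(f) \mid \rho_K(M,\alpha)$ and the bound $0 \leq a \leq s_K(f)$ will then fall out immediately.

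First I would verify that ``lying in the same cluster'' is an equivalence relation on the set of roots $\alpha_1, \dots, \alpha_n$ of $f$ in $\bar K$. Reflexivity is clear, and both symmetry and transitivity reduce to the following one-line observation: if $\alpha_j$ lies in the cluster of $\alpha_i$, that is $\alpha_j \in K(\alpha_i)$, then $K(\alpha_j) \subseteq K(\alpha_i)$, and comparing $K$-degrees (each equal to $\deg f$) forces $K(\alpha_j) = K(\alpha_i)$. Hence the cluster of $\alpha_i$ coincides with the cluster of $\alpha_j$ whenever the two intersect, so the roots of $f$ partition into $s_K(f)$ clusters, each of size $r_K(f)$ by Thm.\ \ref{Perlis}.

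Next I would exploit the key monotonicity: if some root $\alpha_i$ lies in $M$, then $K(\alpha_i) \subseteq M$, and so the entire cluster of $\alpha_i$ (which by definition is contained in $K(\alpha_i)$) is also contained in $M$. Consequently the collection of roots of $f$ in $M$ is a union of clusters, and writing $a$ for the number of clusters it contains yields $\rho_K(M,\alpha) = a \cdot r_K(f)$ with $0 \leq a \leq s_K(f)$. For the second assertion, take $L = K(\alpha)$; then $r_K(L) = r_K(f)$, $s_K(L) = s_K(f)$, and $\rho_K(M,L) = \rho_K(M,\alpha)$ by Def.\ \ref{rho} (well-posedness being guaranteed by Prop.\ \ref{elegant}), which gives the stated conclusion.

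There is no serious obstacle in this argument; the only mildly subtle point is the transitivity of cluster-equivalence, and it is settled by the degree comparison above. Everything else is direct bookkeeping combining Thm.\ \ref{Perlis} with the inclusion $K(\alpha_i) \subseteq M$.
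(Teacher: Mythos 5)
Your argument is correct and complete. The key observations are all in order: since $f$ is irreducible, any root $\alpha_j \in K(\alpha_i)$ generates the same field by the degree count, so ``belonging to the same cluster'' is indeed an equivalence relation partitioning the roots into $s_K(f)$ clusters of common size $r_K(f)$ (the common size being Thm.\ \ref{Perlis}); and the inclusion $K(\alpha_i)\subseteq M$ for any root $\alpha_i\in M$ shows the roots of $f$ in $M$ are a union of full clusters, giving $\rho_K(M,\alpha)=a\,r_K(f)$ with $0\le a\le s_K(f)$. The passage to $\rho_K(M,L)$ via $L=K(\alpha)$ and Prop.\ \ref{elegant} is also handled correctly. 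Note that the paper itself does not prove this lemma at all: it simply cites it as a reformulation of Perlis's Theorem~2 in \cite{perlis2004roots}. So your proposal supplies a short self-contained proof of what the paper delegates to a citation, and it is the natural argument one would expect behind that reference; it also meshes well with how the paper later uses the integer $a$ (e.g.\ in Prop.\ \ref{a}, where $a$ is identified with the number of clusters, equivalently $K$-isomorphic copies of $L$, contained in $M\cap\tilde L$).
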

\smallskip

 The integer $a$ obtained in Lem.\ \ref{perlis rho} can be described as follows. 

\smallskip

\begin{prop} 
\hfill \medskip

{\it \noindent We have \smallskip
    \label{a}
     \begin{enumerate}
       \item $a$ is number of distinct fields inside $M\cap\tilde{L}$ isomorphic to $L$ over $K$.

  \smallskip     \item $a=|Z| $ with $Z= \{ 1\leq i\leq s 
    \ |\ \text{there exists}\ \sigma\in \sigma_i N_G(H)\ \text{with}\ \sigma(L)\subseteq M\cap \tilde{L}\}$ where $\sigma_i$ are coset representatives of $N_G(H)$ in $G$.

   \end{enumerate}
    }
\end{prop}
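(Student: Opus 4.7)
The plan is to prove both parts by tracking which roots of the minimal polynomial of a primitive element $\alpha$ of $L/K$ actually lie in $M$, and organizing them cluster-by-cluster. Write $L=K(\alpha)$ and let $f$ be the minimal polynomial of $\alpha$ over $K$, with roots $\alpha_1,\ldots,\alpha_n\in\tilde L$. Since every root of $f$ lies in $\tilde L$, a root lies in $M$ if and only if it lies in $M\cap\tilde L$, so $\rho_K(M,L)$ counts roots in $M\cap\tilde L$.

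The key observation for part (1) is that the set of roots of $f$ in $M$ is closed under forming clusters: if $\alpha_i\in M$, then $K(\alpha_i)\subseteq M$, and the cluster of $\alpha_i$ consists precisely of the roots of $f$ lying in $K(\alpha_i)$, all of which lie in $M\cap\tilde L$. By Lemma \ref{L isom} together with Remark \ref{tilde L}, the distinct subfields of $\tilde L$ that are $K$-isomorphic to $L$ are exactly $L_1=L,L_2,\ldots,L_s$, each containing exactly one cluster of roots of $f$ (of size $r_K(L)$), and these clusters partition the full set of $n$ roots. Consequently $\rho_K(M,L)=r_K(L)\cdot\#\{i : L_i\subseteq M\cap\tilde L\}$. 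Comparing with the equation $\rho_K(M,L)=a\,r_K(L)$ from Lemma \ref{perlis rho} forces $a$ to be exactly the number of such $L_i$, which proves (1).

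For part (2), the task is to translate the count of subfields $L_i\subseteq M\cap\tilde L$ into the coset description. Using the standard correspondence $L_i=\sigma_i(L)$, where $\sigma_1,\ldots,\sigma_s$ are coset representatives of $N_G(H)$ in $G$, I would first verify that $\tau(L)=L$ for every $\tau\in N_G(H)$: since $L=\tilde L^H$ and $\tau H\tau^{-1}=H$, one has $\tau(L)=\tilde L^{\tau H\tau^{-1}}=\tilde L^H=L$. Hence for any $\sigma\in\sigma_i N_G(H)$, $\sigma(L)=\sigma_i(L)=L_i$, so the existential clause "$\exists\sigma\in\sigma_iN_G(H)$ with $\sigma(L)\subseteq M\cap\tilde L$" collapses to the single condition $L_i\subseteq M\cap\tilde L$. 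Combined with part (1), this gives $a=|Z|$.

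The argument is essentially a bookkeeping exercise and I do not anticipate a serious obstacle; the only subtle point is to be careful that the condition $\sigma(L)\subseteq M\cap\tilde L$ depends only on the coset $\sigma_i N_G(H)$, not on the chosen representative, which is what the normalizer computation ensures. Everything else follows from the Perlis/Krithika--Vanchinathan description of clusters and the primitive element theorem, both already invoked in the section.
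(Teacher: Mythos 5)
Your argument is correct and follows essentially the same route as the paper: part (1) by observing that the roots of $f$ in $M$ form a union of clusters, each cluster sitting in one of the fields $L_i\cong L$ identified via Lemma \ref{L isom} and Remark \ref{tilde L}, and part (2) by the correspondence between the cosets of $N_G(H)$ in $G$ and the conjugates $\sigma_i(L)=L_i$, with the normalizer computation showing the condition depends only on the coset. No gaps worth flagging; your write-up is, if anything, slightly more detailed than the paper's.
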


\begin{proof}
\hfill

\begin{enumerate}
\smallskip    \item In Lemma\ \ref{perlis rho}, we have that the set of roots of $f$ in $M$ is union of $a$ clusters of roots of $f$. From proof of Lemma\ \ref{L isom}, we are done.

 \smallskip   \item From the proof at the beginning of Sec.\ \ref{alternate proofs}, we have that $s_K(L)$ is number of distinct subgroups conjugate to $H={\rm Gal}(\tilde{L}/L)$ in $G={\rm Gal}(\tilde{L}/K)$ which is $[G:N_G(H)]$. From (1), we are done.
\end{enumerate} 

\vspace{-0.6 cm}

\end{proof}

\smallskip

\begin{ex}
    Let $L_k$ be as in proof of Thm.\ \ref{nPk}. Let $L=L_1$. So $r_K(L)=1,s_K(L)=n$ and $\tilde{L}=L_{n-1}$. Also $\rho_K(L_k,L)=k$ for $1\leq k\leq (n-2)$. Also $\rho_K(L_2,L)=r_K(L_2)=2$.
\end{ex}

\begin{ex}
    Let notation be as in subsection \ref{group-cluster-tower}. Let $L=K_1$ and $m_0=1$. Then for $i\geq 0$ and $m_i\leq m < m_{i+1}$, we have $\rho(K_m,L)=\rho(K_{m_i},L)\geq (m_{i+1}-1)r_K(L)$.
\end{ex}
\smallskip

\begin{prop}\label{rho compositum}
\hfill \medskip

\noindent
{\it    Let $M_1/K$ and $M_2/K$ be extensions of $L/K$ contained in $\bar {K}$. Then 
  \[ \rho_K(M_1M_2, L)\geq \rho_K(M_1,L)+\rho_K(M_2,L)-\rho_K(M_1\cap M_2, L).\]
  }
\end{prop}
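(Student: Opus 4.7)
The plan is to translate root capacity into a set-counting statement and then apply inclusion-exclusion. Write $L = K(\alpha)$, let $f$ be the minimal polynomial of $\alpha$ over $K$, and for any extension $N/K$ let $S(N)$ denote the set of roots of $f$ lying in $N$. By Definition \ref{rho} (and Proposition \ref{elegant}, which guarantees independence from the choice of primitive element), $\rho_K(N,L) = |S(N)|$.

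The next step is to record the following three properties of the assignment $N \mapsto S(N)$, each of which is immediate. First, monotonicity: if $N \subseteq N'$ then $S(N) \subseteq S(N')$. Second, the intersection identity: $S(M_1) \cap S(M_2) = S(M_1 \cap M_2)$, because a root of $f$ lies in both $M_1$ and $M_2$ precisely when it lies in $M_1 \cap M_2$. Third, the compositum containment: $S(M_1) \cup S(M_2) \subseteq S(M_1 M_2)$, since both $M_1$ and $M_2$ are contained in $M_1 M_2$.

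Combining these with the standard inclusion-exclusion $|S(M_1) \cup S(M_2)| = |S(M_1)| + |S(M_2)| - |S(M_1) \cap S(M_2)|$, we obtain
\[
\rho_K(M_1 M_2, L) = |S(M_1 M_2)| \geq |S(M_1) \cup S(M_2)| = \rho_K(M_1, L) + \rho_K(M_2, L) - \rho_K(M_1 \cap M_2, L),
\]
which is the desired inequality.

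There is essentially no obstacle here; the only subtlety worth noting is that the inequality can be strict, because $M_1 M_2$ may well contain roots of $f$ that lie in neither $M_1$ nor $M_2$ individually. Concretely, with $L = K(\alpha_1)$ taken as in Example following Proposition \ref{a}, one can choose $M_1 = K(\alpha_1,\alpha_2)$ and $M_2 = K(\alpha_1,\alpha_3)$ inside the $\mathfrak S_n$-example of Theorem \ref{nPk}; then $M_1 \cap M_2 = L$ but $M_1 M_2$ contains three roots, showing strict inequality. This confirms that only a one-sided bound is available in general.
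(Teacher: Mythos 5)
Your main argument is correct and is essentially the paper's own proof: the paper runs the same inclusion--exclusion, just phrased in terms of the $s_K(L)$ cluster representatives (via Lemma \ref{perlis rho}), whereas you apply it directly to the sets of roots, which is if anything slightly more economical since it never needs Lemma \ref{perlis rho}.

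One correction to your closing remark about strictness: with $M_1=K(\alpha_1,\alpha_2)$ and $M_2=K(\alpha_1,\alpha_3)$ in the $\mathfrak S_n$ example and $n\geq 5$, you get $\rho_K(M_1M_2,L)=3$ while $\rho_K(M_1,L)+\rho_K(M_2,L)-\rho_K(M_1\cap M_2,L)=2+2-1=3$, so this is a case of equality, not strict inequality. (For $n=4$ the compositum $K(\alpha_1,\alpha_2,\alpha_3)$ is the splitting field and contains all four roots, so strictness does hold there; the paper's example with $L=\mathbb{Q}(\sqrt[12]{p})$, $M_1=\mathbb{Q}(\sqrt[12]{p},\sqrt[12]{p}\,\xi_{12}^2)$, $M_2=\mathbb{Q}(\sqrt[12]{p},\sqrt[12]{p}\,\xi_{12}^3)$ also exhibits strictness.) This slip is confined to the side remark and does not affect your proof of the proposition.
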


\begin{proof}
    Consider the set of $s$ many representatives of clusters of minimal polynomial of a primitive element of $L/K$. Consider the subsets of representatives contained in $M_1$ and $M_2$ and let $a$ and $b$ be their respective cardinalities. Hence $\rho_K(M_1)=a.r_K(L)$ and $\rho_K(M_2)=b.r_K(L)$. Let $i$ be cardinality of intersection of these two sets. Hence $\rho_K(M_1\cap M_2, L)\geq i.r_K(L)$.
    Also since cardinality of union of these two sets is $a+b-i$. Hence $\rho_K(M_1M_2, L)\geq (a+b-i).r_K(L)$. Hence we are done. \end{proof}

\begin{ex}
    Let $p$ be a prime and let $a=\sqrt[12]{p}$ and $b=\xi_{12}$. Let $L=\mathbb{Q}(a)$ and $M_1=\mathbb{Q}(a, ab^2)$ and $M_2=\mathbb{Q}(a, ab^3)$. Thus $M_1 M_2 =\tilde{L}$. So $\rho_K(M_1M_2, L)=12, \rho_K(M_1,L)=6, \rho_K(M_2,L)=4$. Hence $\rho_K(M_1M_2, L)> \rho_K(M_1,L)+\rho_K(M_2,L)-\rho_K(M_1\cap M_2, L)$. The inequality is strict here.
\end{ex}

\medskip

Let $M/K$ be an extension of $L/K$ and $\rho_K(M,L)=a. r_K(L)$ for some $0\leq a \leq s_K(L)$. Let $L=K(\alpha)$ for some $\alpha\in \bar{K}$ with $f$ as the minimal polynomial over $K$. Relabel the roots $\{\alpha_i\}_{1\leq i \leq n}$ such that $\alpha_1=\alpha, \alpha_2,\ldots, \alpha_s$ forms a set of representatives of the clusters of roots of $f$ such that $\alpha_i\in M\iff 1 \leq i\leq a$. Let $L_M:=K(\alpha_1,\alpha_2,\ldots,\alpha_a)$. The roots of $f$ in $L_M$ and $M$ are same. We can observe that $L_M=\tilde{L}^{T}$ where $T=\bigcap \limits_{i\in Z}\ \sigma_i H \sigma_i^{-1}$ with notations as in Prop.\ \ref{a}.


\begin{prop}
\label{L_M}
\hfill \medskip

{\it \noindent
Let $M/K$ be extension of $L/K$. Let $L_M$ be as above. 
Then the following hold.\smallskip

 \begin{enumerate}

 \item $L_M$ is independent of choice of primitive root for $L/K$ that is $L_M$ is well defined for a given $M$ and $L$. \smallskip

 \item $L_M\subseteq M \cap \tilde{L}$.
 
 \smallskip    \item $\rho_K(L_M,L)=\rho_K(M,L)$. For $K \subset P\subset M$, we have $\rho_K(M,L)=\rho_K(P,L)$ if and only if $L_M \subset P$.
     
 \smallskip    \item $\rho_K(M,L)= r_K(L)\iff L=L_M$. 

 \smallskip    \item  $\rho_K(M,L_M)= r_K(L_M)$. Thus we have $\sigma|_{L_M} \in {\rm Aut}(L_M/K)$ for any $\sigma \in {\rm Aut}(M/K)$.

\smallskip     \item $M\cap \tilde{L}=L\implies \rho_K(M,L)=r_K(L)$. Also $M\cap \tilde{L}=K \implies \rho_K(M,L)=0$.

 \smallskip    \item If $M/K$ is obtained by strong cluster magnification from $L/K$ then $\rho_K(M,L)=r_K(L)$.

 \end{enumerate}
 }
 \end{prop}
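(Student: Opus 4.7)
The proof of Proposition \ref{L_M} is organised around the explicit description $L_M = K(\alpha_1,\ldots,\alpha_a)$, where $\alpha_1,\ldots,\alpha_a$ are the roots of $f$ lying in $M$. The core observation to be used repeatedly is that the roots of $f$ in $M$ form a union of complete clusters, and any $K$-embedding of $L_M$ into $\bar K$ whose image lies in $M$ permutes these roots and hence stabilises $L_M$.

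For (1), the independence of the choice of primitive element is an immediate consequence of Proposition \ref{elegant}: if $L = K(\alpha) = K(\beta)$ with minimal polynomials $f$ and $g$, the polynomial $\mu \in K[x]$ constructed there yields a bijection $\alpha_i \mapsto \beta_i$ between the roots of $f$ and $g$ satisfying $\alpha_i \in M \iff \beta_i \in M$, so $K(\alpha_1,\ldots,\alpha_a) = K(\beta_1,\ldots,\beta_a)$. Parts (2), (3), (4) are then direct. Part (2) holds because each $\alpha_i$ is both in $M$ (by the choice of labelling) and in $\tilde L$ (as a root of $f$). For (3), the inclusion $L_M \subset M$ gives $\rho_K(L_M,L) \leq \rho_K(M,L)$, while the $a$ chosen cluster representatives all lie in $L_M$, forcing the reverse inequality; for the second assertion, $L_M \subset P$ implies $P$ contains each $K(\alpha_i)$ and hence all $ar$ roots of $f$ that $M$ contains, while conversely $\rho_K(P,L) = \rho_K(M,L)$ forces $\alpha_1,\ldots,\alpha_a \in P$. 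Part (4) is just the case $a=1$.

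The main substantive step is (5). The plan is to show that any $K$-monomorphism $\tau \colon L_M \to \bar K$ with $\tau(L_M) \subset M$ satisfies $\tau(L_M) = L_M$: such a $\tau$ extends to an element of ${\rm Gal}(\tilde L/K)$ and permutes the roots of $f$, so each $\tau(\alpha_i)$ is a root of $f$ lying in $M$ and therefore belongs to $\{\alpha_1,\ldots,\alpha_a\}$; hence $\tau(L_M) \subset L_M$, and equality follows by degree. Consequently the cluster of any primitive element of $L_M/K$ inside $M$ coincides with its cluster inside $L_M$, giving $\rho_K(M,L_M) = r_K(L_M)$. The automorphism statement is then Proposition \ref{rho properties}(2).

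For (6), if $M \cap \tilde L = L$ then every root of $f$ in $M$ lies in $\tilde L \cap M = L$, so $\rho_K(M,L) = \rho_K(L,L) = r_K(L)$; if $M \cap \tilde L = K$ and $L \neq K$, all roots of $f$ in $M$ would lie in $K$, but an irreducible polynomial of degree $>1$ has no roots in $K$, forcing $\rho_K(M,L) = 0$. Finally (7) follows from (6) via Lemma \ref{jacob}: strong cluster magnification provides a Galois $F/K$ with $\tilde L \cap F = K$ and $M = LF$, and applying Lemma \ref{jacob} with $E = \tilde L$ and $E' = L$ yields $\tilde L \cap LF = L$, i.e.\ $\tilde L \cap M = L$, whereupon (6) applies. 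I expect (5) to be the only delicate point, but the cluster-stabilisation observation keeps it tractable.
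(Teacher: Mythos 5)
Your proposal is correct, and for parts (1)--(4), (6), (7) it follows essentially the same route as the paper: (1) via Prop.\ \ref{elegant}, (2)--(4) directly from the definition of $L_M$ (the paper simply says these ``follow from the definition''), (6) from $L_M\subseteq M\cap\tilde L$ together with (4) (your direct argument with roots is equivalent, and you correctly flag the implicit hypothesis $L\neq K$ in the second implication), and (7) via Def.\ \ref{SCM} and Lem.\ \ref{jacob} exactly as in the paper. The only genuine divergence is (5): the paper applies Lem.\ \ref{perlis rho} to $L_M$ and then quotes Prop.\ \ref{a}(1), asserting that the number of $K$-isomorphic copies of $L_M$ inside $M\cap\widetilde{L_M}$ is $1$ ``by definition of $L_M$'', whereas you prove that assertion from scratch: any $K$-embedding $\tau$ of $L_M$ into $\bar K$ with image in $M$ extends to ${\rm Gal}(\tilde L/K)$, sends the $\alpha_i$ to roots of $f$ lying in $M$, hence into $L_M$, so $\tau(L_M)=L_M$ by a degree count. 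Your version is more self-contained and arguably fills in the step the paper leaves terse; the paper's version is shorter because it reuses the machinery of Lem.\ \ref{perlis rho} and Prop.\ \ref{a}. One small inaccuracy in your (5): $\tau(\alpha_i)$ need not belong to the set of representatives $\{\alpha_1,\dots,\alpha_a\}$ (it may be any element of one of those $a$ clusters); what you need, and what is true, is that every root of $f$ in $M$ lies in $L_M$ (each cluster of a representative $\alpha_j$ is contained in $K(\alpha_j)\subseteq L_M$), which still yields $\tau(L_M)\subseteq L_M$ and hence equality. With that phrase adjusted, the argument is complete.
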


\begin{proof}

Proof of (1): From the proof of Prop.\ \ref{elegant} it follows that if $K(\alpha)=K(\beta)$ then there is a relabelling of $\{\beta_i\}_{1\leq i \leq n}$ such that $K(\alpha_i)=K(\beta_i)$. Hence, for the above labelling of $\{\beta_i\}_{1\leq i \leq n}$, we have $K(\alpha_1,\alpha_2,\ldots,\alpha_a)=K(\beta_1,\beta_2,\ldots,\beta_a)$.\smallskip

Proofs of (2), (3), (4) follow from definition of $L_M$.

\smallskip Proof of (5): From Lem.\ \ref{perlis rho}, $\rho(M,L_M)=a.r_K(L_M)$ for $a\leq s_K (L_M)$. From Prop. \ref{a} (1), $a$ is number of distinct fields inside $M\cap \tilde{L}$ isomorphic to $L_M$ over $K$. By definition of $L_M$, we have $a=1$. The second assertion follows from Prop. \ref{rho properties} (2).\smallskip

Proof of (6): Since $M\cap \tilde{L}=L$, we use (2) to conclude that $L=L_M$. Hence by (4), we are done.\smallskip

Proof of (7): If the extension $M/K$ is obtained by strong cluster magnification from $L/K$, then we have $M\cap \tilde{L}=L$ (see Def.\  \ref{SCM} and Lem.\ \ref{jacob} ). 
\end{proof}
\smallskip

\begin{rem}
   The statement $L_M = M\cap \tilde{L}$ and the statement $\rho_K(M,L)=0 \implies M\cap \tilde{L}=K$ are not true in general. Let $K=\mathbb{Q}, L=\mathbb{Q}(\sqrt[4]{2})$ and $M=\mathbb{Q}(\sqrt[6]{2})$. Hence $\tilde{L}=\mathbb{Q}(\sqrt[4]{2}, \iota )$ and $M\cap \tilde{L}=\mathbb{Q}(\sqrt[]{2})$ and $L_M=\mathbb{Q}$.
\end{rem}
\smallskip

\begin{thm}\label{hint}
  {\it   If $M\cap \tilde{L}=L$ and $[M:L]=r_K(M)/r_K(L)$, then $M/L$ is Galois.  }
\end{thm}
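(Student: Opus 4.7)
The plan is to translate the intersection hypothesis $M \cap \tilde{L} = L$ into information about automorphisms via the root capacity machinery just developed, and then use the degree hypothesis to force the order of $\Aut(M/L)$ to equal $[M:L]$.

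First I would note that by Prop.\ \ref{L_M} (6), the hypothesis $M \cap \tilde{L} = L$ immediately gives $\rho_K(M, L) = r_K(L)$. Applying Prop.\ \ref{rho properties} (2) to this equality, every $\sigma \in {\rm Aut}(M/K)$ satisfies $\sigma|_L \in {\rm Aut}(L/K)$. This puts us in a position to use Prop.\ \ref{r divide} (2), which yields the divisibility
\[ r_K(M) \ \big| \ r_L(M)\, r_K(L). \]

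Next, I would use the second hypothesis $[M:L] = r_K(M)/r_K(L)$ to rewrite the above as $[M:L] \mid r_L(M)$. On the other hand, applying Thm.\ \ref{Perlis} (2) to the extension $M/L$ gives $r_L(M) \mid [M:L]$. Combining these two divisibilities forces $r_L(M) = [M:L]$.

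Finally, by Thm.\ \ref{Perlis} (3) we have $r_L(M) = |{\rm Aut}(M/L)|$, so the previous step says $|{\rm Aut}(M/L)| = [M:L]$, which is precisely the condition for $M/L$ to be Galois. The argument is essentially a chain of citations to earlier results; the only nonobvious move is recognising that Prop.\ \ref{L_M} (6) and Prop.\ \ref{rho properties} (2) together convert the geometric hypothesis $M \cap \tilde{L} = L$ into the algebraic hypothesis needed to invoke Prop.\ \ref{r divide} (2). No step looks like a real obstacle, so I would expect the write-up to be short and direct.
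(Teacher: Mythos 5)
Your proof is correct and is essentially identical to the paper's own argument: both convert $M\cap\tilde{L}=L$ into $\rho_K(M,L)=r_K(L)$ via Prop.\ \ref{L_M}~(6), pass to ${\rm Aut}$-restriction via Prop.\ \ref{rho properties}~(2), invoke Prop.\ \ref{r divide}~(2) to get $r_K(M)\mid r_L(M)\,r_K(L)$, and then use the degree hypothesis to force $r_L(M)=[M:L]$, hence $M/L$ Galois. Your write-up just spells out the final divisibility-plus-Perlis~(2)/(3) step that the paper leaves implicit.
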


\begin{proof}
Suppose $M\cap \tilde{L}=L$. Hence from Prop.\ \ref{L_M} (6), Prop.\ \ref{rho properties} (2) and Prop.\ \ref{r divide} (2), we have\\ $r_K(M)| (r_L(M)r_K(L))$. Since $r_K(M)=[M:L] r_K(L)$, we conclude that $r_L(M)=[M:L]$ and $M/L$ is Galois.
\end{proof}
\medskip

\section{Unique Chains for Extensions}\label{unique chains}

\subsection{Unique descending chains}\hfill

\begin{thm}

\label{unique descending chain}
  {\it   
Let $L/K$ be a nontrivial finite extension. Let $G={\rm Gal}(\tilde{L}/K)$ and $H={\rm Gal}(\tilde{L}/L)$.

\begin{enumerate}
    \item The extension $N=\tilde{L}^{N_G(H)}$ is the unique intermediate extension $N/K$ such that $L/N$ is Galois with degree $[L:N]=r_K(L)$. Hence the degree $[N:L]=s_K(L)$. \smallskip
    
       \item There is a unique strictly descending chain of subextensions
       \[ L=N_0\supsetneq N_1 \supsetneq N_2 \supsetneq \dots \supsetneq N_k\]
        such that for all $i\geq 1$, $N_{i-1}/N_i$ is Galois extension with degree $[N_{i-1}:N_i]=r_K(N_{i-1})$, with the chain terminating at $N_k$ for which $r_K (N_k)=1$. Hence the degree  $[N_i: K]=s_K(N_{i-1})$ for all $i\geq 1$ and $s_K(N_{k-1})=s_K(N_k)$.\smallskip

This unique strictly descending chain of subextensions corresponds to the unique strictly ascending chains of subgroups of $G$
\[H=H_0\properideal N_G(H_0)=H_1 \properideal N_G(H_1)=H_2 \properideal \dots \properideal N_G(H_{k-1})=H_k  \]
such that $N_G(H_k)=H_k$. Hence $r_K(N_i)=[H_{i+1}:H_{i}]$ and $s_K(N_i)=[G:H_{i+1}]$. \smallskip

    \item $L/K$ is obtained by weak cluster magnification from $N_k /K$ with magnification factor $r_K(L)$.\smallskip

    \item $N_k=K\iff N_{k-1}/K$ is Galois.\smallskip

     \item $r_K(L)=1\iff$ $ N_G(H)=H \iff$ the unique descending chain is singleton $L$.\smallskip

      \item $L/K$ is Galois $\iff N_G(H)=G$ $\iff$ the unique descending chain is $L\supsetneq K$.\smallskip

     \item $N_G(H)\properideal G\iff$ the unique descending chain is $L\supsetneq N\supsetneq K$.
\end{enumerate}
}
\end{thm}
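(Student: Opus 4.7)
The plan is to prove (1) first via the Galois correspondence for $\tilde L/K$, build (2) by iterating (1), and derive (3)--(7) as formal consequences. For (1), I would use that intermediate fields $N$ with $K\subseteq N\subseteq L$ correspond bijectively to subgroups $H'$ with $H\subseteq H'\subseteq G$, under $H'={\rm Gal}(\tilde L/N)$. The extension $L/N$ is Galois precisely when $H$ is normal in $H'$, equivalently when $H'\subseteq N_G(H)$, and then $[L:N]=[H':H]$. Since Thm.\ \ref{Perlis}(3) gives $[N_G(H):H]=r_K(L)$, the requirement $[L:N]=r_K(L)$ forces $H'=N_G(H)$, which yields both existence and uniqueness of $N$. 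The corresponding degree $[N:K]=[G:N_G(H)]=s_K(L)$ then follows from the alternate argument in Sec.\ \ref{alternate proofs}.

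For (2), I would iterate (1), setting $N_0=L$ and letting $N_{i+1}$ be the field produced by (1) applied to $N_i/K$. To match the theorem's description in terms of subgroups of the single group $G$, put $H_i:={\rm Gal}(\tilde L/N_i)$. The subtlety is that the Galois closure $\widetilde{N_i}$ of $N_i/K$ is usually strictly smaller than $\tilde L$, so Thm.\ \ref{Perlis}(3) reads $r_K(N_i)=[N_{G_i'}(H_i'):H_i']$ inside $G_i':={\rm Gal}(\widetilde{N_i}/K)$ with $H_i':={\rm Gal}(\widetilde{N_i}/N_i)$. I would transfer this to $G$ by observing that the restriction map $G\twoheadrightarrow G_i'$ sends $H_i$ onto $H_i'$ with kernel ${\rm Gal}(\tilde L/\widetilde{N_i})\subseteq H_i$, and carries $N_G(H_i)$ onto $N_{G_i'}(H_i')$; thus $[N_G(H_i):H_i]=[N_{G_i'}(H_i'):H_i']=r_K(N_i)$. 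Applying (1) inside $\tilde L$ then identifies $H_{i+1}=N_G(H_i)$, and finiteness of $G$ forces the strictly ascending chain of subgroups to terminate at a self-normalizing $H_k$, where $r_K(N_k)=1$. The side identities $[N_i:K]=[G:H_{i+1}]=s_K(N_{i-1})$ follow from Thm.\ \ref{Perlis}(2).

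Parts (3)--(7) drop out of this group chain. Part (3) is immediate since $r_K(N_k)=1$ divides $r_K(L)$, with magnification factor $r_K(L)$. For (4), $N_k=K$ is equivalent to $H_k=G$, i.e., $H_{k-1}\unlhd G$, which is exactly the condition that $N_{k-1}/K$ is Galois. Parts (5) and (6) are the degenerate cases in which $H_0$ is already self-normalizing ($r_K(L)=1$) or is already normal in $G$ ($L/K$ Galois), yielding chains of length one and two. For (7), the condition $N_G(H)\properideal G$ (together with $N_G(H)\neq H$, so as to rule out the singleton case) places $H_1=N_G(H)$ strictly between $H_0$ and $G$ with $H_1$ normal in $G$; hence $H_2=N_G(H_1)=G$ and the chain terminates at length three, $L\supsetneq N\supsetneq K$. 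The main obstacle I anticipate is the translation of $r_K(N_i)$ from its native group $G_i'$ into the ambient group $G$; once that compatibility is in place, (1) reduces to a clean Galois correspondence argument, (2) becomes a mechanical iteration, and (3)--(7) are routine subgroup bookkeeping.
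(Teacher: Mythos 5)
Your treatment of (1), the iteration in (2), and parts (3)--(6) is sound and essentially the paper's own route: the paper likewise takes $N=\tilde L^{N_G(H)}$, gets uniqueness from $H\unlhd{\rm Gal}(\tilde L/N')\subseteq N_G(H)$ together with equality of degrees, and then iterates (1). Your explicit transfer of $r_K(N_i)$ from ${\rm Gal}(\widetilde{N_i}/K)$ into the ambient group $G$ is correct --- since the kernel ${\rm Gal}(\tilde L/\widetilde{N_i})$ lies inside $H_i$, the quotient map carries $N_G(H_i)$ onto the normalizer of the image, so $[N_G(H_i):H_i]=r_K(N_i)$ --- and this addresses a point the paper leaves implicit (the paper's remark instead gives the intrinsic description $N_{i+1}=N_i^{{\rm Aut}(N_i/K)}$, and Prop.~\ref{N Aut}/Cor.~\ref{perlis first prop} yield $[N_G(H_i):H_i]=|{\rm Aut}(N_i/K)|$ directly for any ambient Galois extension). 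One small slip in your side identities: $[N_i:K]=[G:H_i]=s_K(N_{i-1})$, whereas $[G:H_{i+1}]$ equals $s_K(N_i)$, not $[N_i:K]$.

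The genuine gap is in (7). The statement is a biconditional whose left side is only $N_G(H)\properideal G$, but your forward implication assumes in addition that $N_G(H)\neq H$ ``to rule out the singleton case,'' so as written you prove a weaker statement. That extra hypothesis is in fact automatic, but it must be argued: since $\tilde L$ is the Galois closure of $L/K$, the core of $H$ in $G$ is trivial, hence $H\unlhd G$ if and only if $H=1$. So if $N_G(H)=H$ and $N_G(H)\unlhd G$, then $H\unlhd G$, forcing $H=1$ and therefore $N_G(H)=G$, contradicting $N_G(H)\neq G$; thus $N_G(H)\properideal G$ already rules out $N_G(H)=H$ (and also $H=1$). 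This is precisely the step the paper writes out in its proof of (7) (it treats (3)--(6) as immediate but spells out this argument). With that observation inserted, your proof of (7), and hence of the whole theorem, is complete.
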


\begin{proof}
\hfill
\smallskip
    
   Proof of (1):  Let $N=\tilde{L}^{N_G(H)}$. Thus $L/N$ is Galois as $H\unlhd N_G(H)$. We have $[L:N]=[N_G(H):H]$. Hence by Thm.\ \ref{Perlis} (3), $[L:N]=r_K(L)$. Suppose $N'/K$ is another intermediate extension such that $L/N'$ is Galois and $[L:N']=r_K(L)$. Since $L/N'$ is Galois, we have $H \unlhd {\rm  Gal}(\tilde{L}/N')\subseteq G$. Thus ${\rm Gal}(\tilde{L}/N')\subseteq N_G(H)$. Hence $N \subseteq N'$. Since $[L:N]=[L:N']$, we have $N=N'$.\smallskip

 Proof of (2):  Let $N_0=L$, From (1), we can inductively choose $N_i$ for each $i\geq 1$. Let $N_i/K$ be the unique intermediate extension of $N_{i-1}/K$ such that $N_{i-1}/N_{i}$ is Galois with degree $[N_{i-1}:N_i]=r_K(N_{i-1})$. The chain terminates since $L/K$ is finite.\smallskip
 
 Proofs of (3)-(6) are easy to see.
 \smallskip

 Proof of (7): 
 We have $N_G(H)=G\iff H\unlhd G\iff H=1$. We also have $N_G(H)\properideal G\implies$ $N_G(H)\neq H$. Since if $N_G(H)= H$, then by assumption, $H\properideal G$. Hence $H=1$, so $N_G(H)=G$ which contradicts the assumption. Hence, $N_G(H)\properideal G\iff$ $N_G(H)\neq H$ and $H\neq 1$ and $N_G(H)\unlhd G$
   $\iff$ $r_K(L)\neq1$ and $L/K$ is not Galois and $N/K$ is Galois $\iff$ the unique descending chain is $L\supsetneq N\supsetneq  K$. \end{proof}

   \smallskip

\begin{rem}  Alternate proof for Thm.\ \ref{unique descending chain} (1).

Let $N=L^{{\rm Aut}(L/K)}$. Hence $L/N$ is Galois and $[L:N]=|{\rm Aut}(L/K)|$. Hence by Thm.\ \ref{Perlis} (3), $[L:N]=r_K(L)$. Suppose $N'/K$ is another intermediate extension such that $L/N'$ is Galois and $[L:N']=r_K(L)$. Since $L/N'$ is Galois, we have $N'=L^{{\rm Aut}(L/N')}$ and $[L:N']=|{\rm Aut}(L/N')|$. Hence $|{\rm Aut}(L/N')|=|{\rm Aut}(L/K)|$. By Prop.\ \ref{r divide} (1), ${\rm {\rm Aut}}(L/N')\subset {\rm Aut}(L/K)$. Thus ${\rm {\rm Aut}}(L/N')= {\rm Aut}(L/K)$. Hence $N'=N$.\smallskip

Because of uniqueness, $\tilde{L}^{N_G(H)}=L^{{\rm Aut}(L/K)}$. This can also be seen in this way. Since $\tilde{L}^{N_G(H)}\subset \tilde{L}^H=L$ and $L^{{\rm Aut}(L/K)}\subset L$, by identifying $N_G(H)/H$ and ${\rm Aut}(L/K)$ through the map in Cor.\ \ref{perlis first prop}, we have $\tilde{L}^{N_G(H)}=L^{{\rm Aut}(L/K)}$.

\end{rem}

\begin{rem}
   Equivalently we can state Thm.\ \ref{unique descending chain} (1) as follows: There exists a unique intermediate extension $N/K$ such that $L/N$ is Galois of maximum possible degree. This is because, since $L/N$ is Galois, we have $[L:N]=|{\rm Aut}(L/N)|$ and we also have ${\rm {\rm Aut}}(L/N)\subset {\rm Aut}(L/K)$. Hence $[L:N]$ is bounded by $r_K(L)$.\smallskip

   This also gives an equivalent definition of $r_K(L)$ as the maximum possible degree of $L/N$ where $N$ is intermediate field of $L/K$ such that $L/N$ is Galois.
\end{rem}

\begin{prop}
\hfill \medskip

{\it \noindent
\label{symmetric sum}

Let $L/K$ be a nontrivial finite extension and $N$ be the unique intermediate extension for $L/K$ as in Thm. \ref{unique descending chain} (1). Suppose $L=K(\alpha)$ for a primitive element $\alpha\in \bar{K}$ with minimal polynomial $f$ over K such that $r_K(f)=r$ and let $\alpha=\alpha_1,\alpha_2,\dots, \alpha_r$ be the roots of $f$ contained in $L$. Then $N = K(t_1,t_2,\ldots,t_r)$ where $t_i$ are elementary symmetric sums of $\alpha_i$ for $1\leq i\leq r$.
}
\end{prop}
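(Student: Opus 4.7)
The plan is to identify $N$ with the fixed field $L^{{\rm Aut}(L/K)}$ (which is given by the alternate proof remark following Thm.\ \ref{unique descending chain}) and then show that $K(t_1,\ldots,t_r)$ equals this fixed field by a two-sided inclusion argument. The key observation is that every $\sigma \in {\rm Aut}(L/K)$ sends each $\alpha_i$ to another root of $f$ lying in $L$, hence permutes the cluster $\{\alpha_1,\alpha_2,\ldots,\alpha_r\}$; since $|{\rm Aut}(L/K)| = r_K(L) = r$ by Thm.\ \ref{Perlis}~(3), this action on the cluster is in fact faithful and transitive.

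First I would establish the inclusion $K(t_1,\ldots,t_r) \subseteq N$. Because the elementary symmetric polynomials in $\alpha_1,\ldots,\alpha_r$ are invariant under every permutation of these roots, they are in particular fixed by the action of ${\rm Aut}(L/K)$ described above. Hence each $t_i$ lies in $L^{{\rm Aut}(L/K)} = N$, giving $K(t_1,\ldots,t_r) \subseteq N$.

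For the reverse inclusion, I would use the polynomial
\[
g(x) \;=\; \prod_{i=1}^{r}(x - \alpha_i) \;=\; x^r - t_1 x^{r-1} + t_2 x^{r-2} - \cdots + (-1)^r t_r,
\]
which has coefficients in $K(t_1,\ldots,t_r)$ and has $\alpha = \alpha_1$ as a root. Thus $\alpha$ is algebraic of degree at most $r$ over $K(t_1,\ldots,t_r)$, giving $[L : K(t_1,\ldots,t_r)] \leq r$. Combining with $K(t_1,\ldots,t_r) \subseteq N \subseteq L$ and the tower law, together with $[L:N] = r$ from Thm.\ \ref{unique descending chain}~(1), forces $[N : K(t_1,\ldots,t_r)] = 1$, so $N = K(t_1,\ldots,t_r)$.

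The only subtle point is the identification $N = L^{{\rm Aut}(L/K)}$, but this is precisely the content of the remark after Thm.\ \ref{unique descending chain} (via Cor.\ \ref{perlis first prop}); once this is in hand, the rest of the argument is elementary. I do not anticipate a major obstacle since everything reduces to the standard fact that elementary symmetric polynomials generate the invariants of the symmetric group action, specialized to the subgroup of ${\mathfrak S}_r$ coming from ${\rm Aut}(L/K)$ acting on the cluster.
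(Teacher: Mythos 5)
Your proposal is correct and follows essentially the same route as the paper: the paper obtains $t_i\in N$ by noting that the minimal polynomial of $\alpha$ over $N$ is $\prod_{i=1}^{r}(x-\alpha_i)$ (using ${\rm Aut}(L/N)={\rm Aut}(L/K)$), while you obtain it by direct invariance of the $t_i$ under ${\rm Aut}(L/K)$ acting on the cluster with $N=L^{{\rm Aut}(L/K)}$ — the same underlying fact. The concluding degree count via the degree-$r$ polynomial over $K(t_1,\ldots,t_r)$ is identical to the paper's.
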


 \begin{proof}
    Let $h$ be the minimal polynomial of $\alpha$ over $N$. The degree of $h$ is $r=|Aut(L/N)|$. Since $Aut(L/N)=Aut(L/K)$, it follows that $h=\Pi_{1\leq i\leq r} (x-\alpha_i)=x^r-t_1 x^{r-1} + \dots + (-1)^r t_r$. \\
    Now $K(t_1,t_2,\ldots,t_r)\subset N$. Since $h$ is a polynomial over $K(t_1,t_2,\ldots,t_r)$ which $\alpha$ satisfies. we conclude that $h$ is the minimal polynomial of $\alpha$ over $K(t_1,t_2,\ldots,t_r)$. Thus $N = K(t_1,t_2,\ldots,t_r)$.\end{proof}
 \medskip

\subsection{Unique ascending chains}\hfill

We mention an analogue of Thm.~\ref{unique descending chain} in this subsection. The proof is similar.\smallskip

 First some notations: For any subgroup $H$ of a group $G$, we denote by $H^G$, the normal closure of $H$ in $G$, i.e., the intersection of all normal subgroups of $G$ that contain $H$.\smallskip

\begin{thm}
\label{unique ascending chain}
 {\it    
Let $L/K$ be a nontrivial finite extension. Let $G={\rm Gal}(\tilde{L}/K)$ and $H={\rm Gal}(\tilde{L}/L)$ be as earlier. 
\smallskip

\begin{enumerate}
    \item The extension $F=\tilde{L}^{H^G}$ is the unique intermediate extension $F/K$ such that $F/K$ is Galois with maximum possible degree.\smallskip



    \item We define the ascending index $t_K(L)$ of $L/K$ by $t_K(L):=[F:K]$. Let $u_K(L):=[L:F]$. Thus $t_K(L)\ u_K(L)=[L:K]$. We have $t_K(L)= [G:H^G]$ and $u_K(L)=[H^G:H]$. We also have $r_K(L)~ |~ t_K(L)\ r_F(L)$. \smallskip

    \item There exists a unique strictly ascending chain inside $L$, i.e.,
    \[ K=F_0\subsetneq F_1 \subsetneq F_2 \subsetneq \dots \subsetneq F_k \] such that for all $i\geq 1$, we have $F_{i}/F_{i-1}$ is Galois with maximum possible degree with the chain terminating at $F_k$ where $t_{F_k}(L)=1$. \smallskip

This unique strictly ascending chain of subextensions corresponds to the unique strictly descending chains of subgroups of $G$
\[G=G_0\properring H^{G_0}=G_1 \properring H^{G_1}=G_2 \properring  \dots \properring H^{G_{k-1}}=G_k\]
such that $H^{G_k}=G_k$. Hence $t_{F_i}(L)= [G_i: G_{i+1}]$ and $u_{F_i}(L)=[G_{i+1}:H]$.

    \smallskip

    \item $F_k=L\iff L/F_{k-1}$ is Galois.\smallskip

     \item $t_K(L)=1\iff H^G=G \iff$ the unique ascending chain is singleton $K$.\smallskip

      \item $L/K$ is Galois $\iff H^G=1\iff H^G=H\iff $ the unique ascending chain is $K\subsetneq L$.\smallskip

     \item $H\properideal H^G\iff H^G\neq G\ \text{and}\ H^G\neq H \ \text{and}\ H\unlhd H^G\iff$ the unique ascending chain is $K\subsetneq F\subsetneq  L$.
\end{enumerate}
}
\end{thm}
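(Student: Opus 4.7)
My plan is to mirror the proof of Thm.\ \ref{unique descending chain}, with the role of the normalizer $N_G(H)$ (the largest subgroup of $G$ in which $H$ is normal) replaced by the normal closure $H^G$ (the smallest normal subgroup of $G$ containing $H$).

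For (1), I would use the Galois correspondence: an intermediate field $K\subseteq F\subseteq L$ is Galois over $K$ precisely when ${\rm Gal}(\tilde{L}/F)$ is normal in $G$, and the containment $F\subseteq L$ forces ${\rm Gal}(\tilde{L}/F)\supseteq H$. Maximizing $[F:K]$ therefore amounts to minimizing ${\rm Gal}(\tilde{L}/F)$ among normal subgroups of $G$ containing $H$, whose minimum is exactly $H^G$; this gives the unique maximal Galois subextension $F=\tilde{L}^{H^G}$. The first three identities in (2) -- namely $t_K(L)=[G:H^G]$, $u_K(L)=[H^G:H]$, and $t_K(L)\,u_K(L)=[L:K]$ -- are then immediate from the tower $K\subseteq F\subseteq L$ via Galois correspondence.

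For the divisibility $r_K(L)\mid t_K(L)\,r_F(L)$ in (2), the key observation is that since $F/K$ is Galois, every $\sigma\in{\rm Aut}(L/K)$ stabilizes $F$ setwise, so $\sigma|_F\in{\rm Aut}(F/K)$. This is exactly the hypothesis of Prop.\ \ref{r divide}(2) applied to the tower $K\subseteq F\subseteq L$, yielding $r_K(L)\mid r_F(L)\cdot r_K(F)=r_F(L)\,t_K(L)$, where $r_K(F)=[F:K]=t_K(L)$ because $F/K$ is Galois. For (3), I would build the chain inductively: set $F_0=K$, and given $F_{i-1}$ let $F_i$ be the unique maximal Galois subextension of $L/F_{i-1}$ produced by (1) applied to $L/F_{i-1}$ (noting that $\tilde{L}$, being Galois over $K$, is \emph{a fortiori} Galois over $F_{i-1}$ and still contains $L$, so it continues to play the role of the Galois closure). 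Writing $G_i={\rm Gal}(\tilde{L}/F_i)$, the recursion reads $G_i=H^{G_{i-1}}$; finiteness of $G$ forces termination at some $G_k$ with $H^{G_k}=G_k$, i.e.\ $t_{F_k}(L)=1$.

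Items (4)--(7) then follow formally: (4) because $F_k=L$ iff the last recursive step identified $L$ itself as the maximal Galois subextension of $L/F_{k-1}$; (5) because $t_K(L)=1$ is the same as $H^G=G$; (6) because $L/K$ Galois is equivalent to $H\unlhd G$, which is $H^G=H$ (and further forces $H=1$, hence $H^G=1$); (7) because the three conditions $H^G\neq G$, $H^G\neq H$, and $H\unlhd H^G$ translate respectively to $F\neq K$, $F\neq L$, and $L/F$ Galois, cutting off the chain at exactly three terms. No step is a substantial obstacle; the only mildly delicate point is verifying the hypothesis of Prop.\ \ref{r divide}(2) in the divisibility claim, which is the standard fact that Galois subextensions are preserved by ambient $K$-automorphisms.
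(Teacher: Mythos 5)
Your proposal is correct and follows essentially the same route as the paper: the authors explicitly say the proof is "similar" to that of Thm.\ \ref{unique descending chain}, and your argument is exactly that mirroring, with the normal closure $H^G$ (smallest normal subgroup of $G$ containing $H$) replacing the normalizer $N_G(H)$, the chain built by iterating part (1) with $G_i=H^{G_{i-1}}$, and the divisibility in (2) obtained from Prop.\ \ref{r divide}(2) together with $r_K(F)=[F:K]$ for the Galois subextension $F$.
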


The following result connects the unique descending chain and unique ascending chain for a nontrivial finite extension $L / K$.\smallskip

\begin{prop}\label{link unique}
\hfill \medskip

{\it \noindent
    Let $L/K$ be a nontrivial finite extension. Let $N$ be as in Thm.\ \ref{unique descending chain} and $F$ be as in Thm.
     \ref{unique ascending chain}.

    \begin{enumerate}

      \item $N_G(H)=G \iff r_K(L)=[L:K] \iff t_K(L)=[L:K]\iff H^G=H$.
      
        \item $H\unlhd H^G\iff H^G\subset N_G(H) \iff  N\subset F$.

        \item $N_G(H)\unlhd G\implies H^G\subset N_G(H)$.
        
        \item $H^G=N_G(H)\implies N_G(H)\properideal G $ and $H\properideal H^G$.

        \item $H^G=N_G(H) \iff  N= F \iff $ the unique descending chain is $L\supsetneq N\supsetneq K$ and the unique ascending chain is $K\subsetneq F \subsetneq L$ and they both coincide. In this case we have $r_K(L) \ t_K(L)=[L:K]$ and $t_K(L)=s_K(L)$.
      
    \end{enumerate}
    }
\end{prop}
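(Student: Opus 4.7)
The plan is to translate every claim into a statement about subgroups of $G = {\rm Gal}(\tilde L/K)$ using the Galois correspondence and the explicit descriptions $N = \tilde L^{N_G(H)}$ and $F = \tilde L^{H^G}$ from Thm.\ \ref{unique descending chain}(1) and Thm.\ \ref{unique ascending chain}(1). All five items then reduce to elementary manipulations involving the normalizer $N_G(H)$ (the largest subgroup of $G$ in which $H$ is normal) and the normal closure $H^G$ (the smallest normal subgroup of $G$ containing $H$), together with the index formulas $r_K(L) = [N_G(H):H]$ from Thm.\ \ref{Perlis}(3), $t_K(L) = [G:H^G]$ from Thm.\ \ref{unique ascending chain}(2), $s_K(L) = [G:N_G(H)]$ proved at the start of Sec.\ \ref{alternate proofs}, and $[L:K] = [G:H]$.

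For (1), I would observe that all four conditions are equivalent to the single statement that $H$ is normal in $G$: $N_G(H) = G$ and $H^G = H$ both literally say $H \unlhd G$, while $r_K(L) = [L:K]$ becomes $[N_G(H):H] = [G:H]$, i.e.\ $N_G(H)=G$, and $t_K(L) = [L:K]$ becomes $[G:H^G] = [G:H]$, i.e.\ $H^G = H$. For (2), the equivalence $H \unlhd H^G \iff H^G \subseteq N_G(H)$ is immediate from the definition of normalizer, and inclusion-reversing Galois correspondence converts it into $N \subseteq F$. For (3), $H \subseteq N_G(H)$ always, and $N_G(H) \unlhd G$ by hypothesis, so the minimality property of $H^G$ gives $H^G \subseteq N_G(H)$.

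For (4), assume $H^G = N_G(H)$. Normality of $H^G$ in $G$ is automatic, so $N_G(H) \unlhd G$, and $H \unlhd N_G(H) = H^G$ is the defining property of the normalizer. The strict containments I would argue by contradiction using nontriviality of $L/K$ (i.e.\ $H \neq G$): if $N_G(H) = G$ then $H \unlhd G$, forcing $H^G = H$, and combined with $H^G = N_G(H) = G$ this gives $H = G$, a contradiction; similarly, $H = H^G$ would imply $H \unlhd G$, hence $N_G(H) = G$, and then $H = H^G = N_G(H) = G$, again a contradiction. For (5), the equivalence $H^G = N_G(H) \iff N = F$ is the equality case of (2). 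Under this hypothesis, (4) together with Thm.\ \ref{unique descending chain}(7) and Thm.\ \ref{unique ascending chain}(7) identify the two unique chains as $L \supsetneq N \supsetneq K$ and $K \subsetneq F \subsetneq L$, which coincide because $N = F$. The numerical identities then drop out of a tower computation: $r_K(L)\,t_K(L) = [N_G(H):H]\cdot [G:N_G(H)] = [G:H] = [L:K]$, and $t_K(L) = [G:H^G] = [G:N_G(H)] = s_K(L)$.

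I do not expect a single hard step; the main point requiring care is the strict containment $\properideal$ in (4), where the nontriviality of $L/K$ has to be invoked explicitly (otherwise one gets only $\unlhd$), and the bookkeeping in (5) to confirm that the field-theoretic ``chains coincide'' statement corresponds precisely to the group-theoretic equality $H^G = N_G(H)$.
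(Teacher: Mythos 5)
Your proof is correct, and it is exactly the argument the paper intends: the paper states Prop.\ \ref{link unique} without giving a proof, and its definitions $N=\tilde L^{N_G(H)}$, $F=\tilde L^{H^G}$, together with the index formulas $r_K(L)=[N_G(H):H]$, $s_K(L)=[G:N_G(H)]$, $t_K(L)=[G:H^G]$ and parts (7) of Thms.\ \ref{unique descending chain} and \ref{unique ascending chain}, reduce everything to the normalizer/normal-closure bookkeeping you carry out. Your handling of the only delicate points --- using nontriviality ($H\neq G$) to get the strict containments in (4), and matching ``the chains coincide'' with $N=F$, hence $H^G=N_G(H)$, in (5) --- is accurate.
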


\begin{ex}

   For a degree $4$ non-Galois extension $L/K$, we have $N=F$. In particular, $K=\mathbb{Q}$, $L=\mathbb{Q}(\sqrt[4]{2})$ and hence $N=F=\mathbb{Q}(\sqrt[]{2})$.
\end{ex}

\medskip

\subsection{Interesting Examples}\label{Interesting Examples}\hfill

In this subsection, we will compute the unique ascending /descending chains for some examples.\medskip

 \begin{prop}\label{nth root}
    \hfill \medskip

{\it \noindent
Let the notation and conditions be as in Example \ref{cluster tower example}. Let $L=\mathbb{Q}(a)$ and $K=\mathbb{Q}$ and let $v_2$ be the standard $2$-adic valuation.

\begin{enumerate}
    \item Let $N_i=\mathbb{Q}(a^{2^i})$. Then the unique descending chain is $L=N_0\supsetneq N_1\supsetneq \dots \supsetneq N_{v_2(n)}$. \smallskip

    \item Let $F_i=\mathbb{Q}(a^{n/2^i})$. Then the unique ascending chain is $K=F_0\subsetneq F_1\subsetneq \dots \subsetneq F_{v_2(n)}$. 
\end{enumerate}
}
 \end{prop}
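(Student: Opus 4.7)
The plan is to verify (1) using the characterization of the unique descending chain via cluster sizes, and (2) using the characterization of the unique ascending chain via maximal Galois subextensions, together with a concrete classification of the subfields of $L$.

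Writing $n = 2^v m$ with $m$ odd and $v = v_2(n)$, the element $a^{2^i}$ satisfies $x^{n/2^i} - c = 0$, which I would argue is the minimal polynomial of $a^{2^i}$ over $\Q$ by a short tower argument: $[\Q(a^{2^i}):\Q]\cdot[\Q(a):\Q(a^{2^i})] = n$, and each factor is bounded above by the respective obvious polynomial degrees $n/2^i$ and $2^i$, forcing equality. Hence $[N_i:\Q] = n/2^i$, and an analogous argument gives $[F_i:\Q] = 2^i$.

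For part (1), the plan is to compute $r_\Q(N_i)$ by counting roots of $x^{n/2^i} - c$ lying in $N_i$. Since $a$ is the positive real root, $N_i \subset \R$, so only real roots contribute; these are $\pm a^{2^i}$ when $n/2^i$ is even (i.e.\ $i < v$) and only $a^{2^i}$ when $n/2^i$ is odd (i.e.\ $i = v$). Thus $r_\Q(N_i) = 2$ for $i < v$ and $r_\Q(N_v) = 1$. Each step satisfies $N_{i-1} = N_i(a^{2^{i-1}})$ with $a^{2^{i-1}}$ a root of $x^2 - a^{2^i}$ over $N_i$, and both roots $\pm a^{2^{i-1}}$ already lie in $N_{i-1}$, so $N_{i-1}/N_i$ is Galois of degree $2 = r_\Q(N_{i-1})$. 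The chain terminates at $N_v$ because $r_\Q(N_v) = 1$, and uniqueness from Thm.\ \ref{unique descending chain}(2) identifies it with the unique descending chain.

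For part (2), the strategy is to show inductively that $F_{i+1}$ is the maximal Galois subextension of $L/F_i$ for $0 \le i < v$ and that $L/F_v$ admits no nontrivial Galois subextension, so that Thm.\ \ref{unique ascending chain}(3) pins down the chain. Using the semidirect product presentation $G \cong (\Z/n\Z) \rtimes (\Z/n\Z)^{\times}$ with $H$ the subgroup $\{0\} \rtimes (\Z/n\Z)^{\times}$, a routine check shows the subgroups of $G$ containing $H$ are exactly $K_d := (d\Z/n\Z) \rtimes (\Z/n\Z)^{\times}$ for $d \mid n$, with fixed field $\Q(a^{n/d})$ in $\tilde L$. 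Hence the intermediate fields $F_i \subset P \subset L$ are precisely $\Q(a^d)$ for $d \mid n/2^i$. Over $F_i$ the element $a^d$ has minimal polynomial $x^{n/(2^i d)} - a^{n/2^i}$, whose roots are $a^d \zeta$ with $\zeta$ an $(n/(2^i d))$-th root of unity. For $\Q(a^d)/F_i$ to be Galois the subfield $\Q(a^d) \subset \R$ must contain all such $\zeta$, and since the only real roots of unity are $\pm 1$ this forces $n/(2^i d) \in \{1, 2\}$, leaving either $d = n/2^i$ (giving $F_i$) or $d = n/2^{i+1}$ (giving $F_{i+1}$, available precisely when $i < v$). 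The main obstacle I anticipate is the group-theoretic classification of subgroups of $G$ containing $H$; once that is in hand, the constraint $\Q(a^d) \subset \R$ does the rest by restricting cyclotomic content to $\pm 1$ and collapsing the search for Galois subextensions at every stage.
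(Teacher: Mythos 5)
Your proof is correct, but it takes a genuinely different route from the paper's in both parts. For part (1) the paper works inside $G\cong \Z/n\Z\rtimes(\Z/n\Z)^{\times}$, computes $N_G(H)$ explicitly to get $N_1=\Q(a^2)$, and then iterates after checking that ${\rm Gal}(\tilde{N_1}/\Q)$ has the same semidirect-product shape; you instead count the real roots of $x^{n/2^i}-c$ inside the real field $N_i$ (so $r_\Q(N_i)=2$ for $i<v_2(n)$ and $=1$ at the end) and verify directly the defining properties of the chain in Thm.\ \ref{unique descending chain}(2). This is more elementary and never uses the hypothesis $\sqrt{c}\notin\Q(b)$, so it also covers the excluded case --- essentially the observation of Remark \ref{nth root remaining case}, extended to the whole chain. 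For part (2) the paper computes the normal closure $H^G$ and again iterates via Lemma \ref{jacob}; you instead classify once and for all the subgroups of $G$ containing $H$ (equivalently the subfields $\Q(a^d)$, $d\mid n$, of $L$) --- this classification is indeed routine, as $H\subseteq S$ forces $S=A'\rtimes(\Z/n\Z)^{\times}$ with $A'$ a subgroup of $\Z/n\Z$ --- and then use the reality of $L$ to force any Galois subextension of $L/F_i$ to have degree at most $2$, which pins down $F_{i+1}$ as the maximal Galois subextension and gives $t_{F_v}(L)=1$ at the last stage, so Thm.\ \ref{unique ascending chain}(3) identifies the chain. Your route trades the paper's inductive recomputation of Galois groups at each stage for a single subgroup classification plus the fact that the only real roots of unity are $\pm 1$; the paper's computation has the side benefit of exhibiting $N_G(H)$ and $H^G$ (hence $r_K(L)=t_K(L)=2$) explicitly. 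The only cosmetic addition needed is to note that $F_{i+1}/F_i$, being of degree $2$ in characteristic $0$, is indeed Galois, so it is genuinely the maximum and not merely the only surviving candidate.
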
 

\begin{proof}

 Identifying $G={\rm Gal}(\mathbb{Q}_f/\mathbb{Q})$ with $\mathbb{Z}/n \mathbb{Z} \rtimes (\mathbb{Z}/n \mathbb{Z})^{\times}$ we have $H= \{0\}\times (\mathbb{Z}/n \mathbb{Z})^{\times}\subseteq G$. Now $G$ has the semidirect product group law \[(\alpha,u)\cdot (\beta,v)=(\alpha + u\cdot \beta, uv)\] 
as in \cite{jacobson1990galois} where $u\cdot\beta$ is usual multiplication $u\beta$ in the ring $\mathbb{Z}/n \mathbb{Z}$. Thus $(\alpha,u)^{-1}=(- u^{-1} \alpha, u^{-1})$ and
\[(\alpha,u)\cdot(0,v)\cdot(\alpha,u)^{-1}=(\alpha-v\alpha,v)\]

Proof of (1):\smallskip

If $(\alpha, u)\in N_G(H) $ then $ \alpha=v\alpha$ for all $v\in (\mathbb{Z}/n \mathbb{Z})^{\times}$. \smallskip

If $n$ is odd then $2\in (\mathbb{Z}/n \mathbb{Z})^{\times}$ and in particular $2 \alpha = \alpha$. Thus $\alpha=0$ and $N_G(H)=H$. Hence $r_K(L)=1$. Hence by Thm.\ \ref{unique descending chain}(5), the unique descending chain is singleton $L$. \smallskip


If $n$ is even, then $(n/2)(1-v)=0$ for all $v \in (\mathbb{Z}/n \mathbb{Z})^{\times}$ as all these $v$ are given by odd integers mod $n$. Since $-1\in (\mathbb{Z}/n \mathbb{Z})^{\times}$, we get $2\alpha=0$. Hence $\alpha=0 \ \text{or}\ n/2$. Thus $N_G(H)$ can be identified with the set $\mathbb{Z}/2 \mathbb{Z} \times (\mathbb{Z}/n \mathbb{Z})^{\times}$ where $\mathbb{Z}/2 \mathbb{Z}$ is generated by the element $(n/2,1)$ of $G$ mapping $a$ to $-a$ and $b$ to $b$. Also $r_K(L)=2$. Hence $\tilde{L}^{N_G(H)}=\mathbb{Q}(a^2)=N_1$.\smallskip

Let $\tilde{N_1}$ be Galois closure of $N_1/K$. Hence $\tilde{N_1}=\mathbb{Q}(a^2,b^2)$. Also since 
$\mathbb{Q}(a)\cap \mathbb{Q}(b)=\mathbb{Q}$. Hence $\mathbb{Q}(a^2)\cap \mathbb{Q}(b^2)=\mathbb{Q}$. Hence ${\rm Gal}(\tilde{N_1}/K)$ can be identified with $\mathbb{Z}/(n/2) \mathbb{Z} \rtimes (\mathbb{Z}/(n/2) \mathbb{Z})^{\times}$. If $n/2$ is even, we can repeat the process for $N_1$ and get $N_2$. The process will terminate at $N_{v_2(n)}$ as $n/v_2(n)$ is odd. \smallskip

Proof of (2): \smallskip

Now $H^G$ is generated by elements of the form $(\alpha-v\alpha,v)$ with $\alpha\in \mathbb{Z}/n \mathbb{Z}, v\in (\mathbb{Z}/n \mathbb{Z})^{\times}$.\smallskip

If $n$ is odd then $2\in (\mathbb{Z}/n \mathbb{Z})^{\times}$. Thus $(\alpha-2\alpha,v)=(-\alpha,v)\in H^G$ for any $\alpha,v$. Thus $H^G=G$. Hence $t_K(L)=1$. Hence by Thm.\ \ref{unique ascending chain}(5), the unique ascending chain is singleton $K$. \smallskip

If $n$ is even then all $v\in (\mathbb{Z}/n \mathbb{Z})^{\times}$ are given by odd integers mod $n$. Thus all $\alpha-v\alpha$ are even mod $n$. Since $-1\in (\mathbb{Z}/n \mathbb{Z})^{\times}$, we get $(2\alpha,v)\in H^G$ for any $\alpha,v$. Thus $H^G=\mathbb{Z}/(n/2) \mathbb{Z} \rtimes (\mathbb{Z}/n \mathbb{Z})^{\times}$ where $\mathbb{Z}/(n/2) \mathbb{Z}$ is generated by the element $(2,1)$ of $G$ mapping $a$ to $ab^2$ and $b$ to $b$. Also $t_K(L)=2$. Hence $\tilde{L}^{H^G}=\mathbb{Q}(a^{n/2})=F_1$.\smallskip

Let $L_1$ be Galois closure of $L/F_1$. Hence $L_1=\mathbb{Q}(a,b^2)$. Also since 
$\mathbb{Q}(a)\cap \mathbb{Q}(b)=\mathbb{Q}$. Hence $L\cap \mathbb{Q}(b^2)=\mathbb{Q}$. By Lemma \ref{jacob}, $L\cap F_1 (b^2)=F_1$. Hence ${\rm Gal}(L_1/F_1)$ can be identified with $\mathbb{Z}/(n/2) \mathbb{Z} \rtimes (\mathbb{Z}/(n/2) \mathbb{Z})^{\times}$. If $n/2$ is even, we can repeat the process for $F_1$ and get $F_2$. The process will terminate at $F_{v_2(n)}$ as $n/v_2(n)$ is odd. \end{proof}

\smallskip

 \begin{rem} \label{nth root remaining case}

 The Part (1) above is also true for the case not covered, that is $n$ is even and $\sqrt{c}\in \mathbb{Q}(b)$. The following proof works in general. We know that $N_1=L^{{\rm Aut}(L/K)}$. Since $n$ is even, ${\rm {\rm Aut}}(L/K)$ has $2$ elements mapping $a$ to $a$ and $a$ to $-a$. Hence $\mathbb{Q}(a^2)\subset L^{{\rm Aut}(L/K)}$. Now since $x^n-c$ is minimal polynomial for $a$ over $\mathbb{Q}$. Hence $a^2$ satisfies $x^{n/2}-c$. We claim that it is indeed the minimal polynomial for $a^2$ over $\mathbb{Q}$. If not, then let $f(x)$ be minimal polynomial for $a^2$ with degree $<n/2$. Then $a$ satisfies the polynomial $f(x^2)$ which has degree $<n$ which gives a contradiction. Hence $[\mathbb{Q}(a^2):\mathbb{Q}]=n/2$. Thus $N_1=\mathbb{Q}(a^2)$. Proceeding similarly we get the unique descending chain.
 \end{rem}

 \begin{ex}
%

  Consider the case in Prop.\ \ref{nth root} with $n\equiv 2\ \text{mod}\ 4$. Hence for $L=\mathbb{Q}(a)$ and $K=\mathbb{Q}$, we have $L\supsetneq N$ to be the unique descending chain with $N=\mathbb{Q}(a^{2})$ and $K\subsetneq F$ to be the unique descending chain with $F=\mathbb{Q}(a^{n/2})$. We can show that $L/K$ is obtained by strong cluster magnification from $N/K$ through $F/K$. \smallskip
  
  Now $F/K$ is clearly Galois. Since $(n/2,2)=1$. Hence $NF=L$. Thus $N\cap F=K$. Since $\mathbb{Q}(a)\cap \mathbb{Q}(b)=K$. Hence $\mathbb{Q}(a)\cap \mathbb{Q}(b^2)=K$. Thus by Lemma \ref{jacob}, $\mathbb{Q}(a)\cap \mathbb{Q}(a^2,b^2)=\mathbb{Q}(a^2)$. That is $\tilde{N}\cap L=N$. Hence $\tilde{N}\cap F=K$. 

\end{ex}

\medskip

\begin{thm}

 \label{Perlis unique}   
{\it 
Let $f$ be the irreducible polynomial over number field $K$ as in proof of Thm.\ \ref{n,r}  with given $n>2$ and $1<r<n$ with $r|n$  and $s=n/r$. Let $L/K$ be extension formed by adjoining a root of $f$ to $K$. Then the unique descending chain is $L\supsetneq N\supsetneq K$ and the unique ascending chain is $K\subsetneq F \subsetneq L$ and they both coincide.}

\end{thm}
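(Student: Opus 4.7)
The plan is to explicitly compute the groups $N_G(H)$ and $H^G$ where $G = {\rm Gal}(\tilde{L}/K)$ and $H = {\rm Gal}(\tilde{L}/L)$, and then invoke Prop.~\ref{link unique}(5). From the construction of $f$ in the proof of Thm.~\ref{n,r}, we may identify $G$ with the transitive permutation group $(\Z/r\Z)^s \rtimes \Z/s\Z$ acting on the $n = rs$ points arranged as $s$ packets of size $r$, where $N := (\Z/r\Z)^s$ permutes each packet independently and $\Z/s\Z$ cycles the packets. Under this identification, $H$ is a point stabiliser. After fixing the point which is the first coordinate of the first packet, a short computation using the semidirect product law gives
\[ H = \{\,((0,a_2,\ldots,a_s),0) : a_i \in \Z/r\Z\,\} \;\cong\; (\Z/r\Z)^{s-1}. \]

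Next I would compute $N_G(H)$ directly. Conjugating a generic element of $H$ by $((a_1,\ldots,a_s),b)$ and using the rule $b\cdot(c_1,\ldots,c_s)=(c_{b+1},\ldots,c_s,c_1,\ldots,c_b)$, one checks that the conjugate equals an element whose $(1-b)$-th coordinate is forced to be zero. Thus $((a_1,\ldots,a_s),b)\in N_G(H)$ forces $b \equiv 0 \pmod{s}$, while the $a_i$'s are unrestricted because $N$ is abelian. Hence
\[ N_G(H) = N = (\Z/r\Z)^s. \]

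For $H^G$, the same conjugation calculation shows that the conjugates of $H$ in $G$ are precisely the coordinate hyperplanes $H_i := \{(d_1,\ldots,d_s)\in N : d_i = 0\}$ for $i\in \Z/s\Z$. Each standard basis vector of $N$ lies in one of these hyperplanes (since $s \geq 2$, as $1 < r < n$ forces $s>1$), so the subgroup generated by $\bigcup_i H_i$ is all of $N$. Moreover $N$ is normal in $G$, being the kernel of the projection $G\to\Z/s\Z$. Therefore $H^G = N = N_G(H)$.

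Finally, applying Prop.~\ref{link unique}(5) to the equality $H^G = N_G(H)$ immediately gives that the unique descending chain is $L \supsetneq N_1 \supsetneq K$ with $N_1 = \tilde L^{N_G(H)}$, the unique ascending chain is $K \subsetneq F_1 \subsetneq L$ with $F_1 = \tilde L^{H^G}$, and $N_1 = F_1$. (As a consistency check, $[G:N_G(H)]=s$ and $[N_G(H):H]=r$ recover $s_K(L)=s$ and $r_K(L)=r$, as desired.) The main technical obstacle is simply the bookkeeping in the conjugation computation inside the semidirect product; once the formulas for $H$ and its conjugates are in hand, the identification $N_G(H)=H^G=N$ is immediate.
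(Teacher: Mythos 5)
Your proposal is correct and follows essentially the same route as the paper: identify $G=(\Z/r\Z)^s\rtimes\Z/s\Z$ with $H$ a point stabiliser, carry out the conjugation computation in the semidirect product to show $N_G(H)=H^G=(\Z/r\Z)^s\times\{0\}$, and conclude via Prop.~\ref{link unique}(5). The only difference is cosmetic (which coordinate of the stabiliser is zeroed), and you supply the conjugation details that the paper leaves as ``one can compute and show.''
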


\begin{proof}
Let $G={\rm Gal}(\tilde{L}/K)$ and $H={\rm Gal}(\tilde{L}/L)$. By construction, $G=(\Z/r\Z)^s\rtimes \Z / s\Z$ and $H=(\Z/r\Z)^{s-1}  \times \{0\} \times \{0\}$. Now $G$ has the semidirect product group law given by \[ ((a_1,\dots, a_s),b)\cdot((c_1,\dots,c_s),d)=((a_1,\dots, a_s) + (b\cdot (c_1,\dots, c_s)), b+d), \]
  where $b\cdot(c_1,\dots, c_s)=(c_{b+1},\dots, c_s,c_1,\dots, c_{b})$. \smallskip
  
  Also $((a_1,\dots,a_s),b)^{-1}=((-a_{s-b+1},\dots,-a_{s}, -a_{1},\dots, -a_{s-b}),-b)$. One can verify that
\[((a_1,\dots, a_s),b)\cdot((c_1,\dots,c_{s-1},0),0)\cdot ((a_1,\dots,a_s),b)^{-1}= ((c_{b+1},\dots, c_{s-1}, 0, c_1,\dots,c_b),0).\]
  
  Thus for $r<n$ (that is $s>1$), one can compute and show that $N_G(H)=H^G=(\Z/r\Z)^s\times \{0\}$. Hence by Prop.\ \ref{link unique} (5), we are done.\end{proof}

  \smallskip

\begin{rem}
    For $M/L/K$, the statements  $r_K(M) \geq r_K(L) $ and $r_K(L)|r_K(M) $ (weak cluster magnification) are not true in general. Consider $L/N/K$ as in Thm.\ \ref{Perlis unique}.
Here, $r_K(L) \geq r_K(N)\iff r^2\geq n$ and $r_K(N)|r_K(L) \iff n|r^2$. Thus in particular $n=6,r=2$ and $n=6,r=3$ give us counterexamples for the two statements.

\end{rem}

\medskip


 \begin{thm}\label{unique chain Lk}
{\it 
Let $f$ over $K$ be irreducible of deg $n$ with Galois group ${\mathfrak S}_n$ with roots $\alpha_i\in \bar{K}$ for $1\leq i\leq n$. For $1 \leq k \leq n-2$, let $L_k=K(\alpha_1,\dots, \alpha_k)$. 

\begin{enumerate}

    \item 

Let $N_k$ be the unique intermediate extension for $L_k/K$ as in Thm. \ref{unique descending chain} (1). Then we have the following.\smallskip

 \begin{enumerate}[(i)]

     \item $  N_k = K(t_1,t_2,\ldots,t_k)$ where $t_i$ are elementary symmetric sums of $\alpha_i$ for $1\leq i\leq k$.\smallskip

     \item 
   \noindent Case (a): Characteristic of $K$ $\neq 2$. For $k<n-1$, we have 
   $$N_k=K(t_1)=K(\alpha_1+\alpha_2\dots+\alpha_k).$$

\noindent Case (b) Characteristic of $K= 2$. For $k<n-1$ but $k\neq n/2$,  we have 
$$N_k=K(t_1)=K(\alpha_1+\alpha_2\dots+\alpha_k).$$

     \item for $k\neq 1, n/2$, the unique descending chain is $L_k\supsetneq N_k$. Also $r_K(L_1)=1$.\smallskip

     \item for $k=n/2$ and for field $K$ with characteristic $\neq 2$, the cluster size $r_K(N_k)=2$. The unique intermediate field for $N_k/K$ is $K(t_1(a-t_1))$ where $\alpha_1+\alpha_2+\dots+\alpha_n = a$.
 \end{enumerate}

 \medskip

 \item The unique ascending chain is singleton $K$. And $t_K(L_k)=1$.

 \end{enumerate}
 }   
\end{thm}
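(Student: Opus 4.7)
The plan is to establish the descending-chain parts (1)--(4) first, using throughout the hypothesis that ${\rm Gal}(K_f/K)=\mathfrak S_n$ acts as the full symmetric group on the roots $\alpha_1,\dots,\alpha_n$, and then the ascending-chain claim. Identify $G:=\mathfrak S_n$ with permutations of $\{1,\dots,n\}$; then $H_k:={\rm Gal}(K_f/L_k)$ is the pointwise stabilizer of $\{1,\dots,k\}$, so $H_k\cong S_{n-k}$.

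For part (1), Thm.\ \ref{nPk} gives $|{\rm Aut}(L_k/K)|=r_K(L_k)=k!$, and the $\mathfrak S_n$-hypothesis forces this group to be precisely the copy of $S_k$ permuting $\alpha_1,\dots,\alpha_k$. Since $N_k=L_k^{{\rm Aut}(L_k/K)}$ (Thm.\ \ref{unique descending chain}(1) together with its remark), standard invariant theory of the $S_k$-action identifies the fixed field as $K(t_1,\dots,t_k)$; equivalently, Prop.\ \ref{symmetric sum} applies directly.

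The heart of the argument is parts (2) and (3), for which I show the $G$-orbit of $t_1$ has exactly $\binom{n}{k}=[N_k:K]$ elements. Suppose $S_A:=\sum_{i\in A}\alpha_i$ equals $S_B$ for distinct $k$-subsets $A,B$, and set $C=A\setminus B$, $D=B\setminus A$, disjoint with $|C|=|D|=m\geq 1$ and $S_C=S_D$. When $m<n/2$, pick $e\notin C\cup D$; the transposition $(i,e)$ with $i\in C$ carries $S_C$ to $S_{(C\setminus\{i\})\cup\{e\}}$ while fixing $S_D$, forcing $\alpha_e=\alpha_i$, contrary to the $\mathfrak S_n$-hypothesis. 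The only remaining case is $m=n/2$, which forces $k=n/2$ and $B=\bar A$, so $2S_A=a:=\sum_i\alpha_i\in K$. In characteristic $\neq 2$ this yields $S_A\in K$; then by $G$-transitivity on $k$-subsets $S_T\in K$ for every $T$ of size $n/2$, and comparing two such $T$'s differing in a single index produces $\alpha_i=\alpha_j$, a contradiction. In characteristic $2$ with $k=n/2$ the relation $2S_A=a$ degenerates to $a=0$, which we cannot preclude, explaining the exclusion of this case. Hence $[K(t_1):K]=\binom{n}{k}=[N_k:K]$, giving (2). For (3), the stabilizer of $t_1$ in $G$ is $S_k\times S_{n-k}$; when $k\neq n/2$ the two factors have different sizes so nothing in $G$ can normalize the stabilizer by swapping them, and $N_G(H_{N_k})=H_{N_k}$, i.e., $r_K(N_k)=1$. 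The case $k=1$ is immediate: $H_{L_1}=S_{n-1}$ is self-normalizing in $\mathfrak S_n$ for $n\geq 3$, so $r_K(L_1)=1$.

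For part (4), with $k=n/2$ in characteristic $\neq 2$, the normalizer of $S_{n/2}\times S_{n/2}$ in $\mathfrak S_n$ additionally contains the element swapping the two halves, giving $r_K(N_k)=2$. The nontrivial element of ${\rm Aut}(N_k/K)$ sends $t_1\mapsto a-t_1$, so the unique index-$2$ subfield of $N_k$ is the fixed field of this involution, generated by the invariants $t_1+(a-t_1)=a\in K$ and $t_1(a-t_1)$, namely $K(t_1(a-t_1))$. Finally, for the ascending chain, $H_k^G$ is the normal closure of $S_{n-k}$ in $\mathfrak S_n$; since $k\leq n-2$ gives $n-k\geq 2$, $S_{n-k}$ contains a transposition whose $\mathfrak S_n$-conjugacy class is the set of all transpositions, which generate $\mathfrak S_n$. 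Thus $H_k^G=G$, and by Thm.\ \ref{unique ascending chain} the unique ascending chain is the singleton $\{K\}$ and $t_K(L_k)=1$. The main obstacle is the combinatorial-Galois distinctness argument for the $\binom{n}{k}$ subset-sums, especially the delicate $k=n/2$ boundary that forces the characteristic-$2$ exclusion.
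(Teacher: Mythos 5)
Your proposal is correct, and its skeleton matches the paper's: part (1) via the fixed field of the $\mathfrak S_k$-action together with the splitting-field degree bound, part (2) by showing $t_1$ has $\binom{n}{k}$ distinct conjugates, parts (3)--(4) by computing the cluster size of $N_k$, and the ascending chain via the normal closure of $\mathfrak S_{n-k}$ being all of $\mathfrak S_n$. Where you genuinely diverge is in the two computational steps. For the distinctness of the subset-sums, the paper argues in three cases that an equality would force some root to lie in the field generated by at most $n-2$ of the others; you instead apply a transposition $(i\,e)\in \mathfrak S_n={\rm Gal}(K_f/K)$ to the hypothetical identity $S_C=S_D$ and immediately get $\alpha_i=\alpha_e$, with only the complementary-sets case $k=n/2$ surviving, which you then kill exactly as the paper does using $2S_A=a$ and ${\rm char}\,K\neq 2$; this is shorter and uniform. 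For (3) and (4), the paper counts which conjugate subset-sums lie in $K(t_1)$ (plus a symmetry argument for $k>n/2$), while you compute $N_{\mathfrak S_n}(\mathfrak S_k\times\mathfrak S_{n-k})$ directly from the orbit structure (self-normalizing for $k\neq n/2$, index $2$ for $k=n/2$) and use $r_K(N_k)=[N_G(J):J]$ with $G={\rm Gal}(K_f/K)$ and $J={\rm Gal}(K_f/N_k)$; since this identity is taken inside $K_f$ rather than the Galois closure of $N_k$, a word of justification is due (every $K$-automorphism of $N_k$ extends to the normal extension $K_f$, and Prop.\ \ref{N Aut} gives injectivity), though the paper itself uses the same fact in Sec.\ \ref{group-cluster-tower}, so this is presentational, not a gap. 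Two minor inaccuracies: Prop.\ \ref{symmetric sum} does not ``apply directly'' to give (1), since it expresses $N_k$ through symmetric functions of the $k!$ conjugates of a primitive element of $L_k$, not of $\alpha_1,\dots,\alpha_k$ (your primary invariant-theory/degree-count route, which is the paper's, is the correct one); and in (4) the identification of the fixed field of $t_1\mapsto a-t_1$ as $K(t_1(a-t_1))$ should be closed off by the quadratic/degree argument, i.e.\ Prop.\ \ref{symmetric sum} applied to $N_k=K(t_1)$, as the paper does.
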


\begin{proof} \hfill \smallskip
\begin{enumerate}
    \item 
 \begin{enumerate}[(i)]
     \item  Now, $\tilde{L_k}=L_{n-1}=L_n$. As noted in proof of Thm.\ 3 in \cite{krithika2023root}, the subgroup $H_k \subseteq{\mathfrak S}_n$ fixing $L_k$ is isomorphic to $\mathfrak S_{n-k}$. From Lemma 3 in \cite{krithika2023root}, $N_{{\mathfrak S}_n} (H_k)\cong \mathfrak S_{n-k}\times \mathfrak S_k$ where $\mathfrak S_k$ permutes the $k$ roots $\alpha_1,\alpha_2,\dots,\alpha_k$ and $\mathfrak S_{n-k}$ permutes the other $n-k$ roots. Now $N_k=\tilde{L_n}^{N_{{\mathfrak S}_n} (H_k)}$. Hence ${\rm Gal}(L_k/N_k)\cong \mathfrak S_k $ and $[L_k:N_k]=k!$ and $[N_k:K]=\ ^n C_k$.
\smallskip
     
     Now $K(t_1,t_2, \dots ,t_k)\subseteq N_k$. Also $L_k$ is splitting field of polynomial $x^k-t_1 x^{k-1} + \dots + (-1)^k t_k$ over $K(t_1,t_2,\dots,t_k)$. Hence $[L_k : K(t_1,t_2,\dots,t_k)]\leq k!$. Thus $K(t_1,t_2,\dots,t_k)= N_k$.
  \smallskip

\item We give a proof for  case (a) only. The proof in the other case follows similarly. Now for any $l<n-1$, we have $\alpha_{i_{l+1}}\not \in K(\alpha_{i_1},\alpha_{i_2},\dots,\alpha_{i_l})$ and we also have $\alpha_{i_n}\in K(\alpha_{i_1},\alpha_{i_2},\dots,\alpha_{i_{n-1}})$ for distinct $i_m\leq n$. Since $t_1=\alpha_1+\alpha_2+\dots+\alpha_k$, we have that $t_1$ has at most $\ ^n C_k$ conjugates inside $\tilde{L_k}$ of the form $\alpha_{i_1}+\alpha_{i_2}+\dots+\alpha_{i_k}$ for distinct $i_m\leq n$.\smallskip

   We claim that number of conjugates is exactly $^nC_k$ that is, if $\{i_1,i_2,\dots, i_k\}$ and $\{j_1,j_2,\dots,j_k\}$ are distinct sets of $k$ numbers $\leq n$ then $\alpha_{i_1}+\alpha_{i_2}+\dots+\alpha_{i_k}\neq\alpha_{j_1}+\alpha_{j_2}+\dots+\alpha_{j_k}$. Assume on the contrary $\alpha_{i_1}+\alpha_{i_2}+\dots+\alpha_{i_k}=\alpha_{j_1}+\alpha_{j_2}+\dots+\alpha_{j_k}$.
   
   \smallskip

 Case 1 : Let $k<n/2$. Hence number of distinct $\alpha_i$ in the equation is at most $n-1$. After cancelling the common terms on both sides and then by taking all $\alpha_i$ on one side except say $\alpha_{i_l}$, we get $\alpha_{i_{l}}$ lies in field generated over $K$ by other $\leq n-2$ many $\alpha_i$. This gives a contradiction.\smallskip

 Case 2 : Let $k> n/2$. Now before cancelling we have $\leq n-k$ terms on RHS distinct than terms on LHS. Hence we have $\geq 2k-n$ common terms on both sides. Cancelling these common terms out leaves us with $\leq n-k$ terms on both sides. Since $k>n/2$, we have $n-k< n/2$. So we are back to Case 1.\smallskip

 Case 3 : Let $n$ be even and  $k=n/2$. We assume characteristic is not $2$ in this case. If we can cancel out even one term from both sides then we are back to Case 1. So assume that $\alpha_{i_1}+\alpha_{i_2}+\dots+\alpha_{i_{n/2}}=\alpha_{j_1}+\alpha_{j_2}+\dots+\alpha_{j_{n/2}}$ where all $i_m, j_m$ are distinct. We know that $\alpha_1+\alpha_2+\dots+\alpha_n = a \in K $. Hence $\alpha_{i_1}+\alpha_{i_2}+\dots+\alpha_{i_{n/2}} + \alpha_{j_1}+\alpha_{j_2}+\dots+\alpha_{j_{n/2}}=a$. Thus $2(\alpha_{i_1}+\alpha_{i_2}+\dots+\alpha_{i_{n/2}})=a$. Since characteristic is not $2$, we have $\alpha_{i_1}+\alpha_{i_2}+\dots+\alpha_{i_{n/2}}\in K$. Thus $\alpha_{i_{n/2}}\in K(\alpha_{i_1},\alpha_{i_2},\dots,\alpha_{i_{n/2 -1}})$ which is a contradiction.
 
\smallskip

 Hence $[K(t_1):K]=\ ^nC_k$. Now $K(t_1)\subset N_k$. Hence $K(t_1)= N_k$.\medskip

\item Case 1 : Let $k<n/2$. We claim that $r_K(N_k)=1$. Suppose $\alpha_{i_1}+\alpha_{i_2}+\dots +\alpha_{i_k}\in K(t_1)$ for at least one $i_j\geq k+1$. By similar argument as in Case 1 of (2), we get a contradiction. Hence the unique descending chain is $L_k\supsetneq N_k$.\smallskip

Case 2 : Let $k>n/2$. Thus $n-k<n/2$. So $r_K(N_{n-k})=1$. Hence $N_{{\mathfrak S}_n}({\mathfrak S}_k\times {\mathfrak S}_{n-k})={\mathfrak S}_k\times {\mathfrak S}_{n-k}$. By symmetry $N_{{\mathfrak S}_n}({\mathfrak S}_{n-k}\times {\mathfrak S}_{k})={\mathfrak S}_{n-k}\times {\mathfrak S}_{k}$. Thus $r_K(N_k)=1$.

\smallskip

\item Suppose $k=n/2$ and characteristic of $K$ $\neq 2$. Clearly 
$$(\alpha_{n/2+1}+\alpha_{n/2+2}+\dots + \alpha_n )\in K(\alpha_1+\alpha_2+\dots +\alpha_{n/2}).$$
 Suppose any other $\alpha_{i_1}+\alpha_{i_2}+\dots +\alpha_{i_k}\in K(t_1)$. Thus at least one  $i_l $ satisfies $1\leq i_l \leq k$. Hence we get that at least one of the $\alpha_i$ lies in field generated over $K$ by other $\leq n-2$ many $\alpha_j$, which is a contradiction. Hence $r_K(N_k) = 2$. By Prop.\ \ref{symmetric sum}, unique intermediate field for $N_k/K$ as in Thm. \ref{unique descending chain} (1) will be $K((t_1+a-t_1),t_1(a-t_1))=K(t_1(a-t_1))$. 
 \end{enumerate}
\smallskip

\item Proof of last assertion: We have that the smallest normal subgroup of ${\mathfrak S}_n$ containing ${\mathfrak S}_{n-k}$ is ${\mathfrak S}_n$ itself. This is because $k\leq n-2$ that is $n-k\geq 2$. We know that for $n=3$ and $n\geq 5$, the only proper non trivial normal subgroup of ${\mathfrak S}_n$ is ${\mathfrak A}_n$ and ${\mathfrak A}_n$ cannot contain ${\mathfrak S}_{n-k}$ for any $k\leq n-2$. We also know that for $n=4$, the only proper non trivial normal subgroups of ${\mathfrak S}_4$ are ${\mathfrak A}_4$ and $V_4$ (Klein four-subgroup) and ${\mathfrak A}_4$ and $V_4$ cannot contain ${\mathfrak S}_{4-k}$ for both $k=1,2$. Hence by Thm.\ \ref{unique ascending chain} (5), we are done.
\end{enumerate}
\vspace{-0.6 cm}
\end{proof}

\smallskip

\begin{rem}
    From the above theorem, in the case $k\neq 1, n/2$, the unique descending chain is $L_k\supsetneq N_k$. Hence $L_k/K$ is obtained by nontrivial weak cluster magnification from $N_k/K$ with magnification factor $[L_k:N_k]$. But $L_k/K$ is not obtained by nontrivial strong cluster magnification from $N_k/K$, since the last assertion of above theorem implies that there doesn't exist a Galois $F/K$ contained in $L_k$ as in Def \ref{SCM} with $[F:K]=[L_k:N_k]$.
    
\end{rem}

\begin{rem}
%
   In view of the examples discussed in Sec.\ \ref{Interesting Examples}, we can see that the converse of Thm.\ \ref{hint} 
   is not true. \smallskip

\begin{enumerate}

 \item  Let $M=\tilde{L}$ where $L/K$ is not a Galois extension. Then $M/L$ is Galois but $M\cap \tilde{L}\neq L$ and $[M:L]\neq r_K(M)/r_K(L)$.\smallskip

    \item Consider the case in Thm.\ \ref{Perlis unique}. We have $L/N$ is Galois and $L\cap \tilde{N}=N$ but $[L:N]\neq r_K(L)/r_K(N)$.\smallskip

    \item Consider the case in Thm.\ \ref{unique chain Lk} for $n\geq 5$ and $k=2$ and  Characteristic of $K \neq 2$. Since $N_k=K(\alpha_1+\alpha_2)$. Thus $\alpha_1+\alpha_3,\alpha_2+\alpha_3\in \tilde{N_k}$. Hence $\alpha_1-\alpha_2\in \tilde{N_k}$ and thus $\alpha_1,\alpha_2\in \tilde{N_k}$. That is $L_k\subset \tilde{N_k}$. So $L_k/N_k$ is Galois and $[L_k:N_k]=r_K(L_k)/r_K(N_k)$ but $\tilde{N_{k}}\cap L_k\neq N_k$.

    \end{enumerate}

    \smallskip

On a similar note, the conclusion of Thm.\ \ref{hint} does not hold if  
we remove any one of the two conditions in its hypothesis.\smallskip

\smallskip
    \begin{enumerate}

    \item Let $M/K$ be a nontrivial non Galois extension. Let $L=K$. Then $M\cap \tilde{L}=L$ but $[M:L]\neq r_K(M)/r_K(L)$ and $M/L$ is not Galois.\smallskip

    \item Consider the case in Ex \ref{j=n-k} for odd $n\geq 5$ and $k=(n+1)/2$ and $j=n-k$. Hence $j=(n-1)/2=k-1$. Thus $L_k=L_j(\alpha_k)$. One can verify that the minimal polynomial of $\alpha_k$ over $L_j$ has degree $n-k+1$ and has the roots $\alpha_k,\alpha_{k+1},\dots, \alpha_n$. Also $\alpha_i\not \in L_k$ for $i>k$. Thus $[L_k:L_j]=r_K(L_k)/r_K(L_j)$ but $\tilde{L_{j}}\cap L_k\neq L_j$ and $L_k/L_j$ is not Galois.
    \end{enumerate}

\end{rem}

 \medskip

\section{Base Change theorems}\label{base change}

All along we have been discussing clusters for irreducible polynomial over some fixed perfect base field $K$, or equivalently for
extension $L/K$. Now we would like to define a notion of `base change'.

\begin{defn} \hfill \medskip

 \noindent
We say that a finite extension $L/K$ has a `base change of $K$ by an extension $K' / K$' if $K'/K$ is a finite Galois
extension such that $\tilde{L}$ and $K'$ are linearly disjoint over $K$.
\end{defn}

\begin{rem}
    Consider $M/L/K$. If $M/K$ has a base change of $K$ by $K'$ then $L/K$ also has a base change of $K$ by $K'$. 
\end{rem}

Throughout this section we consider extensions $L/K$ which have base change of $K$ by $K'$ and study the properties of cluster magnification and other phenomena for corresponding extensions $LK'/K'$. 
 
 \smallskip

The following result establishes a certain conditional transitivity of base change.

 \begin{prop}
 \hfill \medskip

{\it \noindent
  Consider an extension $L/K$ which has a base change of $K$ by $K'$. Further, consider the extension $LK'/K'$ which has a base change of $K'$ by $K''$ such that $K''/K$ is Galois. Then the extension $L/K$ has a base change of $K$ by $K''$.
  }
 \end{prop}

 \begin{proof}
     We have $K'/K$ Galois with $\tilde{L}\cap K'=K$ and $K''/K'$ Galois with $\tilde{LK'}\cap K'' = K'$. Observe that $\tilde{L}=\tilde{L}\cap \tilde{LK'}$. Thus $\tilde{L}\cap K''= (\tilde{L}\cap \tilde{LK'}) \cap K''= \tilde{L}\cap( \tilde{LK'}\cap K'')= \tilde{L}\cap K'= K$. Since $K''/K$ is Galois, we are done.
 \end{proof}

\smallskip

 \subsection{A base change theorem for strong and weak cluster magnification}\hfill
 
 First we see a result about field extensions. The following lemma is a special case of \cite[Prop.\ 2.5]{bhagwat2024right}. 
\begin{lem}
    \label{shubham}

{\it    Let $E_1,E_2,E_3$ be Galois over $K$ contained in $\bar{K}$. Suppose each pairwise intersections of $E_i$s is $K$, i.e., \[ E_1\cap E_2=E_2\cap E_3=E_3\cap E_1=K.\] Then 
    \[ E_1 E_2 \cap E_3 =K \iff E_1 E_3 \cap E_2 E_3 =E_3.\]
    }
\end{lem}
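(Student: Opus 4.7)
My plan is to prove both directions simultaneously by showing that each of the two conditions is equivalent to the single degree equality
\[ [E_1 E_2 E_3 : K] \;=\; [E_1 : K]\,[E_2 : K]\,[E_3 : K]. \]
Since $E_1, E_2, E_3$ are all Galois over $K$, all the compositums involved are Galois, so by Remark \ref{morandi} every ``intersection is small'' condition in sight is equivalent to the corresponding linear disjointness and thus to a degree multiplicativity.

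First I would handle the forward direction. The hypotheses $E_1 \cap E_2 = K$ and $E_1 E_2 \cap E_3 = K$ together with the fact that $E_1 E_2$ and $E_3$ are Galois give, by Remark \ref{morandi} applied twice,
\[ [E_1 E_2 : K] = [E_1 : K]\,[E_2 : K], \qquad [E_1 E_2 E_3 : K] = [E_1 E_2 : K]\,[E_3 : K], \]
so the triple product formula holds. Next, from $E_1 \cap E_3 = K$ and $E_2 \cap E_3 = K$ (with $E_3$ Galois) I get $[E_1 E_3 : E_3] = [E_1 : K]$ and $[E_2 E_3 : E_3] = [E_2 : K]$ by the standard translation formula. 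Therefore
\[ [E_1 E_2 E_3 : E_3] \;=\; [E_1 : K][E_2 : K] \;=\; [E_1 E_3 : E_3]\,[E_2 E_3 : E_3], \]
and since $E_1 E_3$ is Galois over $E_3$ (being a compositum of Galois extensions), another application of Remark \ref{morandi} forces $E_1 E_3 \cap E_2 E_3 = E_3$.

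For the converse I would just reverse this computation. Assuming $E_1 E_3 \cap E_2 E_3 = E_3$ and using the pairwise disjointness with $E_3$, the degree formula gives $[E_1 E_2 E_3 : E_3] = [E_1 : K][E_2 : K]$, hence $[E_1 E_2 E_3 : K] = [E_1 : K][E_2 : K][E_3 : K]$. Dividing this by $[E_1 E_2 : K] = [E_1 : K][E_2 : K]$ yields $[E_1 E_2 E_3 : K] = [E_1 E_2 : K][E_3 : K]$, and one last appeal to Remark \ref{morandi} (with $E_3$ Galois) delivers $E_1 E_2 \cap E_3 = K$.

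I do not expect a serious obstacle: the whole argument is bookkeeping of degrees, and the work is already organised by the fact that the three pairwise intersections are trivial. The only mild subtlety is checking at each step that the factor being used is Galois over the appropriate base (so that trivial intersection really is equivalent to linear disjointness via Remark \ref{morandi}), but compositums of Galois extensions are Galois, so this is automatic.
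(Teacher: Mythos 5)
Your argument is correct: each step is a legitimate application of the equivalence (for finite extensions with one factor Galois over the base) between trivial intersection, linear disjointness, and degree multiplicativity, and the pivot through the single identity $[E_1E_2E_3:K]=[E_1:K][E_2:K][E_3:K]$ makes both implications come out of the same computation. The only cosmetic imprecision is the parenthetical ``being a compositum of Galois extensions'' for $E_1E_3/E_3$: the clean justification is that $E_1/K$ Galois forces $E_1E_3/E_3$ Galois (or that $E_1E_3/K$ is Galois and hence so is $E_1E_3/E_3$), but the fact you use is true, so nothing breaks. Note also that in the directions where you pass from degree multiplicativity back to a trivial intersection, the Galois hypothesis is not even needed, since $[AB:F]=[A:F][B:F]$ already forces $A\cap B=F$ by comparing with $[AB:F]\le [A:A\cap B][B:F]$; the Galois assumptions are only essential in the other direction. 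As for comparison with the paper: the paper gives no proof of this lemma at all, deferring to an external reference (Prop.\ 2.5 of \cite{bhagwat2024right}), of which the statement here is a special case. Your degree-bookkeeping argument is therefore a genuinely self-contained alternative, and it has the virtue of using only the elementary facts (Remark \ref{morandi} and the tower/compositum degree formulas) already available in the paper, at the cost of not yielding whatever more general statement the cited proposition provides.
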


\smallskip

\begin{thm}

\label{basechange}
{\it 
 Suppose $M/K$ is obtained by nontrivial strong cluster magnification from an $L/K$ with magnification factor $d$. Let $M/K$ have a base change of $K$ by $K'$. Then $MK'/K'$ is obtained by nontrivial strong cluster magnification from $LK'/K'$ with the same magnification factor $d$.\smallskip
 
  Furthermore,
  $MK'/K$ is obtained by strong cluster magnification from $M/K$ and $LK'/K$ is obtained by strong cluster magnification from $L/K$ and these are non trivial if $[K':K]>1$.
  }
\end{thm}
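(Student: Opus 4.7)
The plan is to verify the three conditions of Def.\ \ref{SCM} directly in each of the three assertions. For the first assertion, the candidate magnifying Galois extension will be $FK'/K'$, where $F/K$ is the Galois extension of degree $d$ through which $M/K$ is obtained by strong cluster magnification from $L/K$. For the last two assertions, the candidate magnifying Galois extension will simply be $K'/K$ itself.

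First I would dispose of the easy conditions. Since $\tilde{L}$ and $K'$ are linearly disjoint over $K$, so are $L$ and $K'$, hence $[LK':K'] = [L:K] > 2$, giving condition (1) for the first assertion; conditions (1) for the remaining two assertions follow from $[L:K] > 2$ and $[M:K] \geq [L:K] > 2$. Condition (3) is immediate in all cases from the compositum identities $(LK')(FK') = LFK' = MK'$, $M \cdot K' = MK'$, and $L \cdot K' = LK'$ (using $M = LF$ from Def.\ \ref{SCM} applied to the original hypothesis). For condition (2) in the last two assertions, the hypothesis that $\tilde{M}$ (resp.\ $\tilde{L}$) and $K'$ are linearly disjoint over $K$ is precisely what is required.

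The main step is condition (2) for the first assertion: the Galois closure of $LK'/K'$ must be linearly disjoint from $FK'$ over $K'$. I would argue that this Galois closure is contained in $\tilde{L}K'$ (any $K'$-embedding of $LK'$ fixes $K$, so conjugates of elements of $L$ lie in $\tilde{L}$), so it suffices to show $\tilde{L}K' \cap FK' = K'$. For this, I invoke Lemma \ref{shubham} with $E_1 = \tilde{L}$, $E_2 = F$, $E_3 = K'$, all Galois over $K$. The pairwise intersections are all $K$: $\tilde{L} \cap F = K$ from the original strong cluster magnification, $\tilde{L} \cap K' = K$ by hypothesis, and $F \cap K' = K$ because $F \subseteq \tilde{L}F = \tilde{M}$ (by Prop.\ \ref{prop}(2)) and $\tilde{M} \cap K' = K$ by hypothesis. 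Further, $\tilde{L}F \cap K' = \tilde{M} \cap K' = K$. The lemma then yields $\tilde{L}K' \cap FK' = K'$, which is the required linear disjointness since both fields are Galois over $K'$ (see Rem.\ \ref{morandi}).

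To conclude, I would verify that the magnification factor is indeed $d$ in the first assertion: since $F$ and $K'$ are linearly disjoint over $K$ with $K'/K$ Galois, we have $[FK':K'] = [F:K] = d$. Nontriviality for the first assertion follows from $d > 1$, and for the last two from $[K':K] > 1$. The hardest part of the argument is verifying the linear disjointness at the $K'$-level, and this is precisely what Lemma \ref{shubham} is designed to handle; once that transfer is in place, the rest is a bookkeeping check of the definition.
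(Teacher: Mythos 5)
Your proposal is correct and follows essentially the same route as the paper's proof: both verify the three conditions of Def.\ \ref{SCM} with $FK'/K'$ as the magnifying extension, reduce the key linear-disjointness step to showing $\tilde{L}K'\cap FK'=K'$ via Lemma \ref{shubham} with $E_1=\tilde{L}$, $E_2=F$, $E_3=K'$, and dispose of the last two assertions directly from the definition using $K'/K$ itself. The only cosmetic difference is that the paper derives $\tilde{L}\cap K'=F\cap K'=K$ from $\tilde{M}=\tilde{L}F$ and $\tilde{M}\cap K'=K$, while you use the section's standing hypothesis on $\tilde{L}$ and the containment $F\subseteq\tilde{M}$; this is immaterial.
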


\begin{proof}

Proof of second assertion essentially follows from definition of cluster magnification. We prove the first assertion. Suppose $M/K$ is obtained by strong cluster magnification from an $L/K$. We have $L/K$ and $F/K$ as in Def.\ \ref{SCM}. Let $M':= MK'$. Since $\tilde{M}$ and $K'$ are linearly disjoint over $K$, we have $\tilde{M}\cap K'=K$. Since $\tilde{M}=\tilde{L}F$, we have $\tilde{L}\cap K'=K$, $L\cap K'=K$ and $F\cap K'=K$. \smallskip
 
 Now we check the three conditions of Def. \ref{SCM} for $M'/K'$.
\begin{enumerate}

\item Let $L':=LK'$. Since $K'/K$ Galois and $L\cap K'=K$, we have $[L':K']=[L:K]=n>2$. \smallskip

\item Let $F':=FK'$. Since, $F/K$ is Galois, $F'/K'$ is Galois. Also, $[F':K']=[F:K]=d>1$. Let $\tilde{L'}$ be Galois closure of $L'$ over $K'$ in $\bar{K}$. Now, $N :=\tilde{L}K'$ is Galois over $K'$ and $L' \subset N$. Hence, $\tilde{L'}\subset N$. Since $F$ is linearly disjoint with $\tilde{L}$ over $K$, we have $\tilde{L}\cap F=K$. Also $\tilde{L}\cap K'=K$ and $F\cap K'=K$. Since $\tilde{L}, F$ and $K'$ are Galois over $K$. From Lem.\ \ref{shubham} for $E_1=\tilde{L} ,E_2=F , E_3=K'$, we have $\tilde{L} F\cap K'=K\iff  \tilde{L} K' \cap F K'=K'$. So we have $N\cap F'=K'$. Thus, $\tilde{L'}\cap F=K'$. Hence $\tilde{L'}$ and $F'$ are linearly disjoint over $K'$.\smallskip

\item $L'F'=(LK')(FK')=(LF)K'=MK'=M'$.
\end{enumerate}
\vspace{-0.6 cm}
\end{proof}

\smallskip

\begin{rem}
    The above theorem can be reformulated in this way: If $M/K$ is obtained by strong cluster magnification from $L/K$ through $F/K$ and $M'/K$ is obtained by strong cluster magnification from $M/K$ through $K'/K$, then $M'/K'$ is obtained by strong cluster magnification from $LK'/K'$ through $FK'/K'$. \smallskip

    We also have $\rho_K (M',L)=r_K(L)$. This is because $M'\cap \tilde{M}=M$ and $M\cap \tilde{L}=L$ which imply $M'\cap \tilde{L}=L$.
\end{rem}

\begin{lem}
\label{isomorphism}
{\it 
The extension $M/K'$ is $K'$-isomorphic to $LK'/K'$ $\iff$ $M=L_1K'$ where $L_1/K$ is $K$-isomorphic to $L/K$. Further in this case, the extension $L_1$ is unique and is given by $L_1=M\cap \tilde{L}$.}
\end{lem}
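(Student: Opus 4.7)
The plan is to prove the biconditional by handling each implication separately, and then to derive the uniqueness claim together with the explicit formula $L_1 = M \cap \tilde{L}$ as a single application of Lemma \ref{jacob}.

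For the easier direction $(\Rightarrow)$, I would take the given $K'$-isomorphism $\phi : LK' \to M$ and set $L_1 := \phi(L)$. Since $\phi$ is $K'$-linear (hence $K$-linear) and injective, $\phi|_L$ is a $K$-isomorphism of $L$ onto $L_1$. Since $\phi$ fixes $K'$ pointwise and is surjective, $L_1 K' = \phi(L)\phi(K') = \phi(LK') = M$, so $L_1$ has all the required properties.

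For the direction $(\Leftarrow)$, suppose $M = L_1 K'$ with a $K$-isomorphism $\sigma : L \to L_1$. The key observation is that any $K$-conjugate of $L$ inside $\bar{K}$ lies in $\tilde{L}$, so $L_1 \subset \tilde{L}$, and the hypothesis that $\tilde{L}$ and $K'$ are linearly disjoint over $K$ transfers to give that both $L$ and $L_1$ are linearly disjoint from $K'$ over $K$. Consequently $L \otimes_K K'$ is a field isomorphic as a $K'$-algebra to $LK'$, and the $K$-algebra map $L \otimes_K K' \to \bar{K}$ sending $\ell \otimes k' \mapsto \sigma(\ell)k'$ yields a $K'$-embedding of $LK'$ into $\bar{K}$ with image $\sigma(L)K' = L_1 K' = M$. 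This is the desired $K'$-isomorphism $LK' \to M$.

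For uniqueness and the explicit formula, I would apply Lemma \ref{jacob} with $E = \tilde{L}$, $F = K'$, and $E' = L_1$. Its hypotheses hold: $\tilde{L} \cap K' = K$ by assumption, and $L_1 \cap K' \subset \tilde{L} \cap K' = K$. The conclusion $\tilde{L} \cap L_1 K' = L_1$ then reads $L_1 = M \cap \tilde{L}$. Since the right-hand side depends only on $M$, any $L_1$ satisfying the conditions of the lemma must coincide with $M \cap \tilde{L}$, which simultaneously proves uniqueness and delivers the closed form. I do not anticipate a serious obstacle; the only point demanding care is the transfer of linear disjointness from $\tilde{L}$ to $L_1$, which is precisely what allows the $K$-isomorphism $L \to L_1$ to be upgraded to a $K'$-isomorphism after base change, and which also supplies the hypothesis needed to invoke Lemma \ref{jacob} at the uniqueness step.
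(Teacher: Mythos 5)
Your proof is correct and follows essentially the same route as the paper: the forward direction via $L_1:=\phi(L)$, the converse via linear disjointness of $\tilde{L}$ (hence of $L$) with $K'$ to transport the $K$-isomorphism $L\to L_1$ across the base change, and uniqueness via Lemma \ref{jacob} with $L_1\subset\tilde{L}$, giving $\tilde{L}\cap L_1K'=L_1$. The only cosmetic difference is that you package the base-change step through $L\otimes_K K'\cong LK'$ and the universal property, whereas the paper writes the same map explicitly on a $K$-basis of $L$; your version is, if anything, a slightly cleaner way to see that the map is a ring homomorphism.
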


\begin{proof}
    Suppose $M/K'$ is isomorphic to $LK'/K'$. Let $\sigma : LK'\rightarrow M$ be the isomorphism such that $\sigma|_{K'}=id_{K'}$. Let $\sigma(L)=L_1$. Hence $M=L_1K$. Since $\sigma|_K=id_K$, it follows that $L_1/K$ is isomorphic to $L/K$.\smallskip   
    
Conversely, suppose $L_1/K$ is isomorphic to $L/K$. Let $\lambda : L\rightarrow L_1$ be the isomorphism such that $\lambda|_K=id_K$. Let $\tilde{\lambda}:LK'\rightarrow L_1K'$ be such that $\tilde{\lambda} (l)=\lambda (l)$ for all $l\in L$ and $\tilde{\lambda}(k')=k'$ for all $k'\in K'$. Let $\{ l_i \}_{1 \leq i \leq [L:K]}$ be a $K$-basis for $L$.  Hence any element of $LK'$ is of the form $\sum \limits_{i} l_i k'_i$ for $k'_i\in K'$.\smallskip

    Suppose $\sum_i l_i k'_i=0$. Since $l_i\in L\subset \tilde{L}$ are linearly independent over $K$, and $\tilde{L}$ and $K'$ are linearly disjoint over $K$; it follows by \cite[Def. 20.1]{morandi2012field}, we have that $\{ l_i \}_{1 \leq i \leq [L:K]}$ are linearly independent over $K'$. Thus $k'_i=0$ for all $i$. Now $\tilde{\lambda} (\sum_i l_i k'_i)= \sum_i \lambda (l_i) k'_i=0$. Hence $\tilde{\lambda}$ is well defined field isomorphism with $\tilde{\lambda}|_{K'}=id_{K'}$.\smallskip

    As $L_1\subset \tilde{L}$ and by Lemma \ref{jacob}, we have $\tilde{L}\cap L_1K'=L_1$. Thus the uniqueness of $L_1$ follows.
\end{proof}
\smallskip

\begin{cor}

\label{isom K}

\hfill \medskip

{\it \noindent
    $M/K$ is $K$-isomorphic to $LK'/K$ $\iff$ $M=L_1K'$ where $L_1/K$ is $K$-isomorphic to $L/K$. (In this case such $L_1$ is unique).
    \smallskip
    
   (In particular, $M/K$ is isomorphic to $LK'/K$ $\iff$ $M/K'$ is isomorphic to $LK'/K'$.)}
\end{cor}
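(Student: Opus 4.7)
The plan is to derive Corollary \ref{isom K} directly from Lemma \ref{isomorphism} by adding one observation: since $K'/K$ is Galois, every $K$-embedding of $K'$ into $\bar{K}$ maps $K'$ onto itself. Once this is in hand, the $K$-version of the isomorphism statement reduces quickly to the $K'$-version already proved.

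For the forward direction of the main equivalence, suppose $\sigma: LK' \to M$ is a $K$-isomorphism of subfields of $\bar{K}$. The restriction $\sigma|_{K'}$ is a $K$-embedding of $K'$ into $\bar{K}$, so by the Galois hypothesis $\sigma(K')=K'$. Setting $L_1 := \sigma(L)$, the restriction $\sigma|_L : L \to L_1$ is a $K$-isomorphism, and $M = \sigma(LK') = \sigma(L)\sigma(K') = L_1 K'$, which is the desired form. For the reverse direction, if $M = L_1 K'$ with $L_1/K$ being $K$-isomorphic to $L/K$, then Lemma \ref{isomorphism} produces a $K'$-isomorphism $LK' \to L_1 K' = M$, which is a fortiori a $K$-isomorphism $LK'/K \to M/K$. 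Uniqueness of $L_1$ and the formula $L_1 = M \cap \tilde{L}$ are inherited directly from Lemma \ref{isomorphism}, since that lemma already establishes uniqueness under the weaker $K'$-isomorphism hypothesis, and any $L_1$ that works in Corollary \ref{isom K} also works in Lemma \ref{isomorphism}.

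The parenthetical equivalence then drops out for free: both conditions --- $M/K \cong LK'/K$ and $M/K' \cong LK'/K'$ --- are characterised by the same statement, namely $M = L_1 K'$ for some $L_1/K$ that is $K$-isomorphic to $L/K$ (via the main assertion of the corollary on one side, and via Lemma \ref{isomorphism} on the other). The only step of the argument that genuinely uses something not already inside Lemma \ref{isomorphism} is the identification $\sigma(K')=K'$, so I expect this Galois-stability of $K'$ under any $K$-embedding to be the single point worth isolating; everything else is essentially bookkeeping.
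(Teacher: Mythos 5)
Your proposal is correct and follows essentially the same route as the paper: the forward direction uses that $\sigma(K')=K'$ because $K'/K$ is Galois (normal), and the reverse direction together with the uniqueness and the parenthetical equivalence are obtained by citing Lemma \ref{isomorphism}, exactly as in the paper's proof.
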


\begin{proof}
     Suppose $M/K$ is isomorphic to $LK'/K$. Let $\sigma : LK'\rightarrow M$ be the isomorphism such that $\sigma|_{K}=id_{K}$. Since $K'/K$ is Galois, it follows that $\sigma(K')=K'$. Let $\sigma(L)=L_1$.  Hence $M=L_1K$ and $L_1/K$ is isomorphic to $L/K$.\smallskip
     
     Conversely, suppose $L_1/K$ is isomorphic to $L/K$. Hence by Lemma \ref{isomorphism}, $L_1K'/K'$ is isomorphic to $LK'/K'$. Thus $L_1K'/K$ is isomorphic to $LK'/K$.
\end{proof}

\medskip

\noindent{\bf An alternate proof for Cluster Magnification theorem Thm.\ \ref{Vanchinathan} \cite[Thm. \ 1]{krithika2023root}:}\smallskip

\begin{proof}
By Lemma \ref{jacob}, since $F/K$ is Galois, we have
$$\tilde{L}\cap F=K\iff \tilde{L}\cap LF=L \ \text{and}\ L\cap F=K.$$ 

So $[LF:K]=[L:K][F:K]=nd$. Since $F/K$ is Galois, we have by Corollary \ref{isom K},
$$M\ \text{is isomorphic to}\ LF\ \text{over}\ K \iff \ M=L'F\ \text{where}\ L'\ \text{is isomorphic to}\ L\ \text{over}\ K. $$ 

Since such $L'$ is unique. Hence by Lemma \ref{L isom}, $s_K(LF)=s_K(L)$. Hence by Thm.\ \ref{Perlis} (2), we have
 \[ [LF:K] /r_K(LF)=[L:K]/r_K(L).\] Thus we get $r_K(LF)=r_K(L) [F:K]=rd$.
\end{proof}

\medskip

\begin{cor}
\hfill \medskip

{\it \noindent
The extension $\tilde{L}K'$ is the Galois closure of $LK'/K'$.
}
\end{cor}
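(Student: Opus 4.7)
The plan is to prove the corollary by showing (i) $\tilde{L}K'/K'$ is Galois and contains $LK'$, and (ii) it is minimal among Galois extensions of $K'$ containing $LK'$.

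For (i), the extension $\tilde{L}/K$ is Galois, so its compositum with any extension of $K$ is Galois over that extension; in particular $\tilde{L}K'/K'$ is Galois. The containment $LK'\subseteq \tilde{L}K'$ is immediate from $L\subseteq \tilde{L}$.

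For (ii), I would pick a primitive element $\alpha$ of $L/K$ with minimal polynomial $f\in K[t]$ of degree $n=[L:K]$, so that $\tilde{L}$ is the splitting field of $f$ over $K$ and $LK' = K'(\alpha)$. Since $\tilde{L}$ and $K'$ are linearly disjoint over $K$, so are $L$ and $K'$ (as $L\subseteq\tilde{L}$), which gives $[LK':K'] = [L:K] = n$. Hence $\alpha$ has the same degree over $K'$ as over $K$, so $f$ remains the minimal polynomial of $\alpha$ over $K'$. Consequently, the $K'$-conjugates of $\alpha$ in $\bar{K}$ are exactly the roots of $f$, i.e., the same as the $K$-conjugates, and the splitting field of $f$ over $K'$ is $\tilde{L}K'$. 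Therefore any Galois extension $E/K'$ in $\bar{K}$ with $E\supseteq LK'$ must contain all $K'$-conjugates of $\alpha$, hence all roots of $f$, hence $\tilde{L}$; combined with $E\supseteq K'$, this forces $E\supseteq \tilde{L}K'$. This shows $\tilde{L}K'$ is the Galois closure of $LK'/K'$.

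There is no serious obstacle: everything reduces to the single observation that linear disjointness of $\tilde{L}$ and $K'$ over $K$ preserves the minimal polynomial of a primitive element of $L/K$, after which the description of the Galois closure as a splitting field takes over. Alternatively, one could avoid invoking a primitive element by applying Lem.\ \ref{jacob} directly: with $F = \tilde{L}$ in that lemma, linear disjointness gives $\tilde{L}\cap LK' = L$, and one then checks that the normal closure of $LK'$ inside $\tilde{L}K'/K'$ must surject onto $\tilde{L}$ under restriction, yielding the same conclusion.
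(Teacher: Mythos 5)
Your proof is correct, but it takes a different route from the paper. The paper deduces the corollary from Lemma \ref{isomorphism} together with Remark \ref{tilde L}: the fields $K'$-isomorphic to $LK'$ are exactly $L_iK'$ for the $s_K(L)$ fields $L_i$ that are $K$-isomorphic to $L$, and since the Galois closure is the compositum of all isomorphic copies, it equals $L_1K'\cdots L_{s_K(L)}K'=\tilde{L}K'$. You instead argue with a primitive element: linear disjointness forces $[LK':K']=[L:K]$, so the minimal polynomial $f$ of $\alpha$ over $K$ stays minimal over $K'$, hence the Galois closure of $K'(\alpha)/K'$ is the splitting field of $f$ over $K'$, which is $\tilde{L}K'$; your upper and lower bounds (normality of $\tilde{L}K'/K'$, and any normal $E/K'$ containing $\alpha$ containing all roots of $f$) are both sound. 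Your argument is more elementary and in fact slightly more general: it only uses that $L$ and $K'$ are linearly disjoint over $K$ (equivalently, that $f$ stays irreducible over $K'$), not the full hypothesis that $\tilde{L}$ and $K'$ are linearly disjoint, and it never needs $K'/K$ to be Galois. What the paper's route buys is consistency with its running technique (counting isomorphic copies via Lemma \ref{L isom}) and, as a byproduct, an explicit description of all $K'$-conjugate fields of $LK'$, which is reused in the surrounding base-change results. Your closing alternative via Lemma \ref{jacob} is plausible but only sketched; the primitive-element argument is the complete one.
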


\begin{proof}
By Lemma \ref{isomorphism}, the fields isomorphic to $LK'/K'$ are $L_iK'/K'$ where $L_i/K$ are distinct fields isomorphic to $L/K$. From remark \ref{tilde L}, $\tilde{L}= L_1 L_2\dots L_{s_K(L)}$. Hence the Galois Closure of $LK'/K'$ is $L_1 K' L_2 K' \dots L_{s_K(L)} K'=\tilde{L}K'$. 
\vspace{-0.5 cm}


\end{proof}

\begin{lem}
\label{Galois preserved}
{\it 
The degrees and Galois groups are preserved under base change. Hence the cluster sizes satisfy
 \[ r_K(L)= r_{K'}(LK').\]
 }
\end{lem}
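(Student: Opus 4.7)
The plan is to deduce the equality of cluster sizes from the fact that base change by $K'$ induces an isomorphism of Galois groups that takes the stabiliser of $L$ to the stabiliser of $LK'$, so that the description of cluster size in Thm.~\ref{Perlis}(3) via normalisers transfers verbatim.

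First I would verify preservation of degree. Since $\tilde{L}$ and $K'$ are linearly disjoint over $K$ with $\tilde{L}/K$ Galois, Remark~\ref{morandi} gives $\tilde{L}\cap K'=K$. Applying Lem.~\ref{jacob} with $E=\tilde L$, $E'=L$, $F=K'$, I obtain $L\cap K'=K$ (and $\tilde L\cap LK' = L$), so $[LK':K']=[L:K]$ and more generally any intermediate field $M$ with $K\subset M\subset \tilde L$ satisfies $[MK':K']=[M:K]$.

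Next I would check preservation of Galois groups. Since $\tilde{L}/K$ is Galois, $\tilde{L}K'/K'$ is Galois, and by the preceding corollary it is the Galois closure of $LK'/K'$. The restriction map $\Phi:{\rm Gal}(\tilde{L}K'/K')\to {\rm Gal}(\tilde{L}/K)$, $\sigma\mapsto \sigma|_{\tilde{L}}$, is a well-defined homomorphism whose kernel fixes both $\tilde{L}$ and $K'$, hence fixes $\tilde{L}K'$, so it is trivial. Injectivity together with $|{\rm Gal}(\tilde{L}K'/K')|=[\tilde{L}K':K']=[\tilde{L}:K]=|{\rm Gal}(\tilde{L}/K)|$ forces $\Phi$ to be an isomorphism (this is also \cite[Thm.\ 2.1]{conrad2023galois}).

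Finally I would match the cluster-size formulae. Set $G={\rm Gal}(\tilde{L}/K)$, $H={\rm Gal}(\tilde{L}/L)$, $G'={\rm Gal}(\tilde{L}K'/K')$ and $H'={\rm Gal}(\tilde{L}K'/LK')$. For $\sigma\in G'$ one has $\sigma\in H'\iff \sigma|_{LK'}=\mathrm{id}\iff \sigma|_L=\mathrm{id}\iff \Phi(\sigma)\in H$, so $\Phi(H')=H$. Consequently $\Phi$ carries $N_{G'}(H')$ isomorphically onto $N_G(H)$, giving $[N_{G'}(H'):H']=[N_G(H):H]$. Invoking Thm.~\ref{Perlis}(3) on both sides yields
\[ r_{K'}(LK')=[N_{G'}(H'):H']=[N_G(H):H]=r_K(L), \]
which is the desired identity. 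The only mild subtlety is verifying that $\tilde{L}K'$ is the Galois closure of $LK'/K'$ (already established) and that $\Phi$ restricts correctly to $H'$; everything else is a routine application of linear disjointness.
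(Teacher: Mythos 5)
Your proposal is correct and follows essentially the same route as the paper: degree preservation via Lemma~\ref{jacob}, the restriction isomorphism ${\rm Gal}(\tilde{L}K'/K')\cong {\rm Gal}(\tilde{L}/K)$ carrying ${\rm Gal}(\tilde{L}K'/LK')$ onto $H$, and then Thm.~\ref{Perlis}(3). Your extra care in checking that $\tilde{L}K'$ is the Galois closure of $LK'/K'$ and in matching normalisers explicitly is a slightly more detailed write-up of the same argument, not a different approach.
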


\begin{proof}

By Lemma \ref{jacob}, since $K'/K$ is Galois, we have $$\tilde{L}\cap K'=K\iff \tilde{L}\cap LK'=L\ \text{and}\ L\cap K'=K.$$ Since, $L\cap K'=K$, we have $[L:K]=[LK':K']$. Also $[\tilde{L}:K]=[\tilde{L}K':K']$. Hence also $[\tilde{L}:L]=[\tilde{L}K':LK']$.\smallskip

   Let $G_1= {\rm Gal}(\tilde{L}K'/K')$. Let $H_1\subset G_1$ be subgroup with $LK'$ as the fixed field that is $H_1= {\rm Gal}(\tilde{L}K'/LK')$. By \cite[Thm. 2.6]{conrad2023galois}, since $\tilde{L}\cap K'=K$, we have $G_1$ is isomorphic to $G$ by restriction. And since $\tilde{L}\cap LK'=L$, we have $H_1$ is isomorphic to $H$ under same isomorphism.\smallskip

   Now the last assertion follows from Thm.\ \ref{Perlis} (3).
\end{proof}
\smallskip

\begin{rem}

Lem.\ \ref{Galois preserved} gives an alternate proof for Thm. \ref{basechange} by using the criterion in Thm. \ref{criterion}.  

\end{rem}


We conclude the following result for the strong cluster magnification of polynomials from Thm. \ref{basechange}.\smallskip

\begin{thm} 
{\it
   Suppose $g$ over $K$ is obtained by nontrivial strong cluster magnification from an $f$ over $K$ with magnification factor $d$. Let $K_g/K$ have a base change of $K$ by $K'$. Then $g$ over $K'$ is obtained by nontrivial strong cluster magnification from $f$ over $K'$ with the same magnification factor $d$.}
\end{thm}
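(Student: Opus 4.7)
The plan is to reduce the polynomial statement to the field-theoretic base change result Thm.\ \ref{basechange} and then translate back via the polynomial/extension dictionary already set up in Sec.\ \ref{Cluster Magnification}. Fix a root $\alpha$ of $f$ and a root $\gamma$ of $g$ in $\bar{K}$, and set $L=K(\alpha)$ and $M=K(\gamma)$, so that $\tilde L=K_f$ and $\tilde M=K_g$. The polynomial-version definition of strong cluster magnification furnishes a Galois extension $F/K$ of degree $d$ such that $M/K$ is obtained by strong cluster magnification from $L/K$ through $F/K$, with $M=LF$ and hence $\tilde M=\tilde L F$.

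Next, I would verify the hypotheses of Thm.\ \ref{basechange} for the extension $M/K$ under the base change $K'/K$. The theorem demands that both $\tilde L$ and $\tilde M$ be linearly disjoint from $K'$ over $K$. For $\tilde M=K_g$ this is exactly the hypothesis. For $\tilde L$, the inclusion $\tilde L=K_f\subseteq \tilde L F=\tilde M=K_g$ together with Remark \ref{morandi} (applied to the Galois extension $K_g/K$) gives $K'\cap\tilde L\subseteq K'\cap\tilde M=K$; a second application of Remark \ref{morandi}, now to the Galois extension $\tilde L/K$, converts this back into linear disjointness.

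Invoking Thm.\ \ref{basechange} then yields that $MK'/K'$ is obtained by nontrivial strong cluster magnification from $LK'/K'$, with the same magnification factor $d$. To translate this conclusion into a statement about polynomials, I would note that linear disjointness of $K_f$ and $K'$ over $K$ gives $[K'(\alpha):K']=[K(\alpha):K]=\deg(f)$, so $f$ remains irreducible over $K'$ and $K'(\alpha)=LK'$; the same argument with $K_g$ in place of $K_f$ shows that $g$ remains irreducible over $K'$ with $K'(\gamma)=MK'$. Applying the polynomial version of strong cluster magnification (with root $\gamma$ of $g$ and root $\alpha$ of $f$) then gives the desired statement, and nontriviality is preserved because $d>1$.

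The only real obstacle is bookkeeping: keeping the various Galois closures and linear-disjointness conditions aligned when passing between the polynomial and the field-extension formulations. Once the chain $\tilde L\subseteq \tilde M$ is observed and Remark \ref{morandi} is applied twice, the result reduces cleanly to Thm.\ \ref{basechange}, so no new ideas are needed beyond those already established.
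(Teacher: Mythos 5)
Your proposal is correct and follows exactly the route the paper intends: the paper states this theorem as a direct consequence of Thm.\ \ref{basechange}, and your argument simply spells out that reduction (checking linear disjointness of $K_f=\tilde L$ via $\tilde L\subseteq\tilde M=K_g$, and translating between the polynomial and field formulations). No new ideas beyond the paper's, and no gaps of substance.
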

\smallskip

Now we state a base change theorem for weak cluster magnification.

\begin{thm}

\label{basechangeweak}

{\it Suppose $M/K$ is obtained by weak cluster magnification from an $L/K$ with magnification factor $d$. Let $M/K$ have a base change of $K$ by $K'$. Then $MK'/K'$ is obtained by weak cluster magnification from $LK'/K'$ with the same magnification factor $d$.}
\end{thm}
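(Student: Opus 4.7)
The plan is to reduce the claim directly to Lemma \ref{Galois preserved}, applied separately to $L$ and to $M$. By the hypothesis of weak cluster magnification we have $r_K(L) \mid r_K(M)$ with $r_K(M)/r_K(L) = d$, so it suffices to show that base change to $K'$ preserves both cluster sizes, i.e.\ $r_K(L) = r_{K'}(LK')$ and $r_K(M) = r_{K'}(MK')$. Then the divisibility transfers and the ratio is unchanged, giving exactly that $MK'/K'$ is obtained by weak cluster magnification from $LK'/K'$ with magnification factor $d$.

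To apply Lemma \ref{Galois preserved} to $M$, I simply invoke the hypothesis that $\tilde{M}$ and $K'$ are linearly disjoint over $K$. To apply it to $L$, I need to verify that $\tilde{L}$ and $K'$ are linearly disjoint over $K$; this is the only point that needs a line of argument. Since $L \subset M$, I have $\tilde{L} \subset \tilde{M}$, and therefore
\[
\tilde{L} \cap K' \subseteq \tilde{M} \cap K' = K,
\]
where the last equality uses Remark \ref{morandi} combined with the fact that $K'/K$ is Galois and linearly disjoint from $\tilde{M}$. Applying Remark \ref{morandi} again in the other direction (using that $K'/K$ is Galois) gives that $\tilde{L}$ and $K'$ are linearly disjoint over $K$, as required. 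No step here seems to present a real obstacle; the proof is essentially a direct transport of the divisibility relation through the equality of cluster sizes furnished by Lemma \ref{Galois preserved}.
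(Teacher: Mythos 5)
Your proposal is correct and follows the paper's own route: the paper also deduces the result from the last assertion of Lemma \ref{Galois preserved} (cluster size is preserved under base change), applied so that both $r_K(L)=r_{K'}(LK')$ and $r_K(M)=r_{K'}(MK')$, with the linear disjointness of $\tilde{L}$ and $K'$ coming from $\tilde{L}\subset\tilde{M}$ exactly as you argue. Your write-up just makes explicit the details the paper leaves implicit.
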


\begin{proof}
Follows from last assertion in Lemma \ref{Galois preserved}. \end{proof}
\smallskip

\subsection{Base change and root capacity}\hfill

\begin{lem}
    \label{unique N}


{\it Collection of intermediate fields $M$ lying between $K'$ and $LK'$ is in
bijective correspondence with collection of intermediate fields $N$ lying between $K$ and $L$ by means of the correspondence $N \mapsto NK'$. For a given $M$, corresponding $N=\tilde{L}\cap M$.
}

\end{lem}

\begin{proof}
     From Lemma \ref{Galois preserved}, number of subgroups of ${\rm Gal}(\tilde{L}/K)$ containing ${\rm Gal}(\tilde{L}/L)=$ number of intermediate extensions for $L/K=$ number of intermediate extensions for $LK'/K'$. The other statement follows from Lemma \ref{jacob}.
\end{proof}
\smallskip

\begin{thm}\label{base change root capacity}
 {\it    Consider $M/L/K$. Let $M/K$ have a base change of $K$ by $K'$. Then $\rho_K(M,L)=\rho_{K'}(MK',LK')$.}
    \end{thm}

    \begin{proof}

    Let $\rho_K(M,L)=a_K \cdot r_K(L)$ and $\rho_{K'}(MK',LK')=a_{K'} \cdot r_{K'}(LK')$ where $a_K, a_{K'}$ are as in Lem.\ \ref{perlis rho}. From Lem.\ \ref{Galois preserved}, we have $r_K(L)=r_{K'}(LK')$.\smallskip

    By Prop.\ \ref{a} (1), $a_K$ is number of distinct fields inside $M\cap \tilde{L}$ isomorphic to $L$ over $K$ and $a_{K'}$ is number of distinct fields inside $MK'\cap \tilde{L}K'$ isomorphic to $LK'$ over $K'$. By Lem.\ \ref{unique N},  $K'\subset P\subset \tilde{L}K'\iff$ $P=L_1K'$ for a unique field $K\subset L_1 \subset  \tilde{L}$.\smallskip

Now for $L_1 \subset  \tilde{L}$, we claim $L_1\subset M\cap \tilde{L}\iff L_1K'\subset MK'\cap \tilde{L}K'$. Suppose 
$L_1K'\subset MK'\cap \tilde{L}K'$. Thus $L_1K'\cap \tilde{M} \subset (MK'\cap \tilde{M})\cap (\tilde{L}K'\cap \tilde{M})$. By Lemma \ref{jacob}, we have $L_1\subset M\cap \tilde{L}$. The other implication is clear. Hence
by Lem.\ \ref{isomorphism}, we are done.\end{proof}

\subsection{Base change and unique chains}\hfill

We study how the unique chains are influenced by base change.\smallskip

\begin{thm}

\label{base change unique chain}
  
\hfill    
 {\it    
    \begin{enumerate}
\smallskip        \item   $L\supsetneq N_1\supsetneq \dots \supsetneq N_k$ is unique descending chain for $L/K$ $\iff$ $LK'\supsetneq N_1K'\supsetneq \dots \supsetneq N_kK'$ is unique descending chain for $LK'/K'$ for $N_i\subset L$ for all $i$.

\smallskip        \item $K\subsetneq F_1\subsetneq \dots \subsetneq F_l$ is unique ascending chain for $L/K$ $\iff$ $K'\subsetneq F_1K'\subsetneq \dots \subsetneq F_lK'$ is unique ascending chain for $LK'/K'$ for $F_i\subset L$ for all $i$.
    \end{enumerate}
    }
\end{thm}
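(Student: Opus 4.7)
The plan is to push the chain structures through the Galois correspondence using the restriction isomorphism from Lemma~\ref{Galois preserved}. Set $G = {\rm Gal}(\tilde{L}/K)$ and $G^{\prime} = {\rm Gal}(\tilde{L} K'/K')$, with $H$ and $H^{\prime}$ the respective subgroups fixing $L$ and $LK'$. Lemma~\ref{Galois preserved} provides an isomorphism $\rho : G^{\prime} \to G$ via restriction, matching $H^{\prime}$ with $H$. Being a group isomorphism, $\rho$ preserves normalizers, normal closures, and every purely group-theoretic construction on subgroups.

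The key field-level compatibility I would verify first is: for any subgroup $U$ of $G$ containing $H$, with $N = \tilde{L}^{U}$, the fixed field in $\tilde{L} K'$ of $\rho^{-1}(U)$ is precisely $NK'$. The inclusion $NK' \subseteq (\tilde{L} K')^{\rho^{-1}(U)}$ is automatic since every element of $\rho^{-1}(U)$ fixes $K'$ (it lies in $G^{\prime}$) and also fixes $N$ (its restriction to $\tilde{L}$ lies in $U$, which fixes $N$). Equality follows from a degree count: linear disjointness of $\tilde L$ and $K'$ combined with Lemma~\ref{jacob} yields $[NK' : K'] = [N : K] = [G : U] = [G^{\prime} : \rho^{-1}(U)]$. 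Lemma~\ref{unique N} ensures conversely that every intermediate field between $K'$ and $LK'$ arises uniquely as $NK'$ for some $K \subset N \subset L$.

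For (1), Thm.~\ref{unique descending chain} identifies the unique descending chain $L = N_0 \supsetneq N_1 \supsetneq \dots \supsetneq N_k$ with the strictly ascending normalizer chain $H = H_0 \properideal N_G(H_0) = H_1 \properideal \dots \properideal N_G(H_{k-1}) = H_k$ that stabilizes at $H_k$, via $N_i = \tilde{L}^{H_i}$. Transporting this normalizer chain by $\rho^{-1}$ gives the analogous normalizer chain in $G^{\prime}$ starting at $H^{\prime}$, and the preceding compatibility identifies its sequence of fixed fields as $LK' = N_0 K' \supsetneq N_1 K' \supsetneq \dots \supsetneq N_k K'$. By Thm.~\ref{unique descending chain} applied to $LK'/K'$, this is exactly the unique descending chain on the $K'$-side. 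The converse direction is obtained by reversing the roles of $G$ and $G^{\prime}$, with Lemma~\ref{unique N} guaranteeing that every intermediate field in the $K'$-side chain is of the form $N_i K'$ for some $N_i \subset L$.

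Part (2) is proved by the same strategy with Thm.~\ref{unique ascending chain} replacing Thm.~\ref{unique descending chain}: the unique ascending chain corresponds to the descending chain of iterated normal closures $G = G_0 \properring H^{G_0} = G_1 \properring H^{G_1} = G_2 \properring \dots$, and normal closure, like normalizer, is preserved by the group isomorphism $\rho$. The main obstacle I anticipate is purely the bookkeeping between the two Galois correspondences; once the $U \leftrightarrow NK'$ compatibility of the middle paragraph is in hand, the rest is routine group-theoretic translation through the isomorphism $\rho$.
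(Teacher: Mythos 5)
Your proposal is correct and takes essentially the same route as the paper: both rest on the restriction isomorphism of Lemma~\ref{Galois preserved} together with the compatibility of the two Galois correspondences, i.e.\ $(\tilde{L}K')^{U}=\tilde{L}^{U}K'$ after identifying the groups (the paper's line $\tilde{L}^{N_G(H)}=N_1\iff(\tilde{L}K')^{N_G(H)}=N_1K'$ is exactly your compatibility for $U=N_G(H)$, resp.\ $U=H^G$, with Lemma~\ref{jacob}/Lemma~\ref{unique N} supplying the degree count and uniqueness for the converse). The only difference is organizational: the paper iterates the one-step identification along the chain, re-basing at each $N_i$ (resp.\ at $F_i$) after noting the linear-disjointness hypotheses persist, whereas you transport the entire iterated-normalizer and iterated-normal-closure chains of Thm.~\ref{unique descending chain}(2) and Thm.~\ref{unique ascending chain}(3) through the isomorphism in one step.
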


\begin{proof}

We use Lemma \ref{Galois preserved} here.\smallskip

Proof of (1): Since $\tilde{L}$ and $K'$ are linearly disjoint over $K$ and $N_i\subset L$ for all $i$, we have that $\tilde{N_i}$ and $K'$ are linearly disjoint over $K$ for all $i$. It is enough to show that the unique $N$ for $LK'/K'$ is $N_1K'$. \smallskip

Since ${\rm Gal}(\tilde{L}K'/K')\cong {\rm Gal}(\tilde{L}/K)=G$ through restriction and ${\rm Gal}(\tilde{L}K'/LK')\cong {\rm Gal}(\tilde{L}/L)=H$ under same map. Hence by identifying the groups, we have 
\[ \tilde{L}^{N_G(H)}=N_1\iff (\tilde{L}K')^{N_G(H)}=N_1K'.\]

Proof of (2): Let $L_i/F_i$ be the Galois closure of $L/F_i$ for all $i\geq 1$. Since $\tilde{L}$ and $K'$ are linearly disjoint over $K$, we have that $L_i$ and $K'$ are linearly disjoint over $K$. Hence by Lemma \ref{jacob}, $L_i$ and $F_i K'$ are linearly disjoint over $F_i$ for all $i$. \smallskip

It is enough to show that the unique $F$ for $LK'/K'$ is $F_1 K'$. Similar to proof of part (1), we have \\
$\tilde{L}^{H^G}=F_1\iff (\tilde{L}K')^{H^G}=F_1K'$.\end{proof}

\smallskip

\subsection{Strong cluster magnification and unique chains}
\hfill

\begin{thm}\label{strong unique descending}
 {\it   Let $M/K$ be obtained by strong cluster magnification from $L/K$ with $r_K(L)\neq 1$. Then we have that $M \supsetneq N_1\supsetneq \dots \supsetneq N_k$ is the unique descending chain for $M/K$ $\iff$ $L\supsetneq N_1\supsetneq \dots \supsetneq N_k$ is the unique descending chain for $L/K$.}
\end{thm}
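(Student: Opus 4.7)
The plan is to use the group-theoretic description of $\tilde{M}/K$ supplied by Prop.\ \ref{prop}. Since $M/K$ is obtained by strong cluster magnification from $L/K$, there is a Galois extension $F/K$, linearly disjoint from $\tilde{L}$ over $K$, with $M=LF$ and $\tilde{M}=\tilde{L}F$. Writing $G={\rm Gal}(\tilde{L}/K)$, $H={\rm Gal}(\tilde{L}/L)$ and $R={\rm Gal}(F/K)$, the restriction map identifies ${\rm Gal}(\tilde{M}/K)$ with $G\times R$ and, under this identification, ${\rm Gal}(\tilde{M}/M)$ with $H\times 1$.

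The key computation is the normaliser: since $(g,r)(h,1)(g,r)^{-1}=(ghg^{-1},1)$, we have $N_{G\times R}(H\times 1)=N_G(H)\times R$. By Thm.\ \ref{unique descending chain} (1) applied to $M/K$, the first intermediate field in the unique descending chain for $M/K$ is $N_1^{(M)}:=\tilde{M}^{N_G(H)\times R}$. Since $1\times R\subseteq N_G(H)\times R$ corresponds under the isomorphism to ${\rm Gal}(\tilde{M}/\tilde{L})$ and $\tilde{M}^{1\times R}=\tilde{L}$, we conclude $N_1^{(M)}=\tilde{L}^{N_G(H)}=N_1^{(L)}$, the first intermediate field in the unique descending chain for $L/K$. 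The hypothesis $r_K(L)\neq 1$ is equivalent, by Thm.\ \ref{unique descending chain} (5), to $N_G(H)\neq H$, guaranteeing $N_1^{(L)}\subsetneq L$ and hence $N_1^{(M)}\subsetneq M$, so both chains begin with a genuine descent and the indexing aligns.

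With $N_1^{(M)}=N_1^{(L)}=:N_1$ established, the tail of each chain is, by the recursive construction in Thm.\ \ref{unique descending chain} (2), the unique descending chain of $N_1/K$. Hence $N_i^{(M)}=N_i^{(L)}$ for every $i\geq 1$ and both chains terminate together at the same $N_k$ with $r_K(N_k)=1$, yielding the stated equivalence in both directions simultaneously. The only nontrivial point in the argument is the product-group normaliser computation; once $N_{G\times R}(H\times 1)=N_G(H)\times R$ is in place, the uniqueness in Thm.\ \ref{unique descending chain} does the remaining work.
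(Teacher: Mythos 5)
Your proposal is correct and follows essentially the same route as the paper: identify ${\rm Gal}(\tilde{M}/K)\cong G\times R$ and ${\rm Gal}(\tilde{M}/M)\cong H\times 1$ via Prop.\ \ref{prop}, compute $N_{G\times R}(H\times 1)=N_G(H)\times R$, and use $\tilde{L}=\tilde{M}^{1\times R}$ to get $\tilde{M}^{N_{G'}(H')}=\tilde{L}^{N_G(H)}$, after which the recursive uniqueness in Thm.\ \ref{unique descending chain} finishes the argument. Your explicit handling of the role of $r_K(L)\neq 1$ and of the coincidence of the chain tails is a slightly fuller write-up of what the paper leaves implicit, but it is the same proof.
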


\begin{proof}
    It is enough to show that $\tilde{M}^{N_{G'}(H')}=\tilde{L}^{N_{G}(H)}$ where $G'={\rm Gal}(\tilde{M}/K)$ and 
    $H'={\rm Gal}(\tilde{M}/M)$. After identifying the groups in Prop.\ \ref{prop}, we have $G'=G\times R$ and $H'=H\times 1$. Hence $N_{G'}(H')=N_{G}(H)\times R$. Now $\tilde{L}=\tilde{M}^{1\times R}$. Hence $\tilde{M}^{N_{G}(H)\times R}\subseteq\tilde{L}$ and ${\rm Gal}(\tilde{L}/\tilde{M}^{N_{G'}(H')})=N_G(H)$. Thus we are done. \smallskip
    
    We could also see the last fact in the following way. By Lemma \ref{jacob}, $\tilde{L}\cap F =K \iff N\cap F=K \ \text{and}\ \tilde{L}\cap NF = N$. We also have $(\tilde{L})(NF)=\tilde{L}F=\tilde{M}$. Hence $Gal(NF/N)\cong Gal(F/K)$ and $Gal(\tilde{M}/N)=Gal(\tilde{L}/N)\times Gal(NF/N)=N_G(H)\times R$.
\end{proof}

\begin{rem}
    If $r_K(L)=1$ in above theorem. Then the unique descending chain for $M/K$ is $M\supsetneq L$.
\end{rem}

\begin{thm}\label{strong unique ascending}
 {\it   Let $M/K$ be obtained by strong cluster magnification from $L/K$ through $F/K$ as in Def \ref{SCM} with $t_K(L)\neq 1$. Then we have
    
    \begin{enumerate}
        \item $F'$ is unique intermediate field for $M/K$ as in Thm. \ref{unique ascending chain} $\iff$ $F'=F_1F$  where $F_1$ is unique intermediate field for $L/K$.

        \item $K\subsetneq F_1\subsetneq \dots \subsetneq F_k$ is the unique ascending chain for $L/K$ $\iff$ $K\subsetneq F_1F\subsetneq \dots \subsetneq F_kF$ is the unique ascending chain for $M/K$ for $F_i\subset L$ for all $i$.
    \end{enumerate}
    }

\end{thm}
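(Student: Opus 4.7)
The approach is to translate everything into the Galois-theoretic language and exploit the product decomposition ${\rm Gal}(\tilde{M}/K) \cong G \times R$ from Prop.\ \ref{prop}, where $G = {\rm Gal}(\tilde{L}/K)$ and $R = {\rm Gal}(F/K)$. Under this identification, $H' = {\rm Gal}(\tilde{M}/M)$ corresponds to $H \times 1$ with $H = {\rm Gal}(\tilde{L}/L)$, and the fixed fields are $\tilde{L} = \tilde{M}^{1 \times R}$ and $F = \tilde{M}^{G \times 1}$.

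For part (1), I would first compute the normal closure $(H')^{G'}$: for any $(g,r) \in G \times R$ and $(h,1) \in H \times 1$, one has $(g,r)(h,1)(g,r)^{-1} = (ghg^{-1}, 1)$, so $(H')^{G'} = H^G \times 1$. By Thm.\ \ref{unique ascending chain} (1) this gives $F' = \tilde{M}^{H^G \times 1}$. On the other hand, the unique intermediate field for $L/K$ is $F_1 = \tilde{L}^{H^G}$, and viewed inside $\tilde{M}$ it equals $\tilde{M}^{H^G \times R}$ (the preimage of $H^G$ under the projection $G \times R \to G$). Via the Galois correspondence, the compositum $F_1 F$ corresponds to the intersection $(H^G \times R) \cap (G \times 1) = H^G \times 1$, and so $F_1 F = F'$, which establishes (1).

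For part (2), I would apply the same computation inductively. Let $K = F_0 \subsetneq F_1 \subsetneq \dots \subsetneq F_k$ correspond via Thm.\ \ref{unique ascending chain} (3) to the descending chain $G = G_0 \supsetneq G_1 \supsetneq \dots \supsetneq G_k$ with $G_{i+1} = H^{G_i}$, and let $G' = G'_0 \supsetneq G'_1 \supsetneq \dots$ be the analogous chain for $M/K$ with $G'_{i+1} = (H')^{G'_i}$. A straightforward induction, using the same conjugation identity as in part (1), yields $G'_i = G_i \times 1$ for all $i \geq 1$. The compositum argument from part (1) then shows $F'_i := \tilde{M}^{G'_i} = F_i F$ at each step. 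The chain for $M/K$ terminates precisely when $G_i = G_{i+1}$, so the lengths agree. The hypothesis $t_K(L) \neq 1$ is exactly what ensures $F_1 \neq K$, so that $F_1 F \supsetneq K$ and the ascending chain for $M/K$ is genuinely nontrivial at the first step.

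The main difficulty is essentially bookkeeping rather than substance: keeping careful track of which intermediate field is fixed by which subgroup of $G \times R$, and verifying that strict inclusions are preserved under the map $F_i \mapsto F_i F$. This is handled uniformly by the identity $(H^{G_i} \times R) \cap (G \times 1) = H^{G_i} \times 1$, which shows the compositum map is strict-inclusion-preserving on the chain. Once this group-theoretic dictionary from Prop.\ \ref{prop} is set up, both parts of the theorem follow cleanly, with no additional field-theoretic input required.
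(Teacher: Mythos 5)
Your proof is correct, and part (1) is essentially identical to the paper's argument: both compute $(H')^{G'}=H^G\times 1$ under the identification $G'\cong G\times R$, $H'\cong H\times 1$ from Prop.\ \ref{prop}, and then read off $F'=\tilde{M}^{H^G\times 1}=F_1F$ from the intersection $(H^G\times R)\cap(G\times 1)=H^G\times 1$. For part (2) you take a genuinely different route. The paper handles only the first step of the chain by the group computation and then delegates the rest to the base change theorem for unique chains (Thm.\ \ref{base change unique chain}\,(2)): since $\tilde{L}=\tilde{M}^{1\times R}$ gives $\tilde{L}\cap F_1F=F_1$, the extension $M/F_1F=L\cdot(F_1F)/F_1F$ is a base change of $L/F_1$ by the Galois extension $F_1F/F_1$, and the chains above $F_1$ and $F_1F$ correspond. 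You instead run a direct induction inside the fixed group $G\times R$, showing $G'_i=G_i\times 1$ via the conjugation identity and then $F'_i=\tilde{M}^{G_i\times 1}=F_iF$ by the same intersection argument, with termination matching because $(H\times 1)^{G_k\times 1}=G_k\times 1$ iff $H^{G_k}=G_k$. Your version is self-contained (no linear-disjointness verification, no appeal to the base change machinery) and makes the bookkeeping transparent, while the paper's version buys brevity by reusing Thm.\ \ref{base change unique chain}; the only slight gloss in your writeup is the converse direction of the stated equivalences, but since both chains are unique and $\tilde{L}\cap F_iF=F_i$ (Lemma \ref{jacob}) pins down $F_i$ inside $L$, that direction follows immediately from your forward computation, just as it implicitly does in the paper.
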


\begin{proof}
(1) Let $G'={\rm Gal}(\tilde{M}/K)$ and $H'={\rm Gal}(\tilde{M}/M)$. After identifying the groups in Prop.\ \ref{prop}, we have $G'=G\times R$ and $H'=H\times 1$. Hence ${H'}^{G'}=H^G\times 1$. Now $F=\tilde{M}^{G\times 1}$. Let $F_1=\tilde{L}^{H^G}$. Hence $F_1=\tilde{M}^{H^G\times R}$. Since $(G\times 1)\cap (H^G\times R)=(H^G\times 1)$, we get $\tilde{M}^{H^G\times 1}=F_1F$. \medskip
    
   (2) Since $\tilde{L}=\tilde{M}^{1\times R}$. Thus $\tilde{L}\cap F_1F=F_1$. Since $M=LF$, by Thm.\ \ref{base change unique chain} (2), we have \\
    $F_1\subsetneq F_2\subsetneq \dots \subsetneq F_k$ is unique ascending chain for $L/F_1$ $\iff$ $F_1 F\subsetneq F_2 F\subsetneq \dots \subsetneq F_k F$ is unique ascending chain for $M/F_1F$ for $F_i\subset L$ for all $i\geq 2$.
\end{proof}

\begin{rem}
    If $t_K(L)=1$ in the above theorem. Then the unique ascending chain for $M/K$ is $K\subsetneq F$.
\end{rem}


\section{Properties of Ascending Index} \label{Properties of tKL} Recall that the ascending index $t_K(L)$ of $L/K$ was defined to be the degree $[F : K]$ in Thm. \ \ref{unique ascending chain}.\smallskip


\begin{prop}\label{prop-tKL}
\hfill \medskip

{\it \noindent
 If $M/L/K$ are extensions, then $t_K(L)|t_K(M)$.
    }
\end{prop}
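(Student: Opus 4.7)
The plan is to use the clean geometric characterization of $t_K(L)$ given by Thm.\ \ref{unique ascending chain}~(1): namely, $F = \tilde{L}^{H^G}$ is the \emph{unique maximal} intermediate subextension $F/K$ of $L/K$ with $F/K$ Galois, and $t_K(L) = [F:K]$. (This is indeed a maximum, since the compositum of any two Galois subextensions of $L/K$ inside $L$ is again a Galois subextension of $L/K$ inside $L$.)

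With this in hand, I would denote by $F_L \subseteq L$ the maximal Galois subextension of $L/K$, so that $t_K(L) = [F_L:K]$, and by $F_M \subseteq M$ the maximal Galois subextension of $M/K$, so that $t_K(M) = [F_M:K]$. Since $F_L \subseteq L \subseteq M$ and $F_L/K$ is Galois, $F_L$ qualifies as one of the Galois subextensions of $M/K$ contained in $M$; by the maximality of $F_M$, this forces $F_L \subseteq F_M$.

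The proof then concludes with the tower law applied to $K \subseteq F_L \subseteq F_M$, giving
\[
[F_M:K] = [F_M:F_L]\cdot [F_L:K],
\]
so that $t_K(L) = [F_L:K]$ divides $t_K(M) = [F_M:K]$, as required.

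There is no real obstacle here; the entire argument reduces to the maximality characterization of $F$ provided by Thm.\ \ref{unique ascending chain}~(1) together with multiplicativity of degrees. The only point requiring a moment of care is confirming that the ``maximum possible degree'' condition in Thm.\ \ref{unique ascending chain}~(1) genuinely picks out the compositum of all Galois subextensions, which is immediate from Galois theory.
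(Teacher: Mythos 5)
Your proof is correct and follows essentially the same route as the paper: both identify the maximal Galois subextensions $F_L\subseteq L$ and $F_M\subseteq M$, observe $F_L\subseteq F_M$ by maximality, and conclude by the tower law. Your extra remark that the maximal Galois subextension contains every Galois subextension (via composita) just makes explicit what the paper leaves implicit.
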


\begin{proof}
    Let $F/K$ and $F'/K$ be the unique intermediate extensions as in Thm.\ \ref{unique ascending chain} for $L/K$ and $M/K$ respectively. Since $F'/K$ is Galois with maximum possible degree contained in $M$, we conclude that $F\subset F'$. Thus $[F:K] \ |\ [F':K]$. 
\end{proof}

\begin{rem}
    The above proposition tells that the analogue of the notion of weak magnification always holds for ascending index.
\end{rem}

\smallskip

From Lemma \ref{Galois preserved}, we have the following base change result for ascending index.\smallskip

\begin{prop}\label{basechange-tKL}
\hfill \medskip

{\it \noindent
    Let $L/K$ be a finite extension and let $K'/K$ be a finite Galois extension such that $\tilde{L}$ and $K'$ are linearly disjoint over $K$. Then \[ t_K(L)= t_{K'}(LK').\]
    }
\end{prop}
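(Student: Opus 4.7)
The plan is to reduce the proposition directly to Lemma \ref{Galois preserved} together with Theorem \ref{base change unique chain}(2). Recall from Thm.\ \ref{unique ascending chain} that $t_K(L) = [F:K]$, where $F = \tilde{L}^{H^G}$ with $G = {\rm Gal}(\tilde{L}/K)$ and $H = {\rm Gal}(\tilde{L}/L)$; in the language of the unique ascending chain, $F$ is just the first step $F_1$.

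First I would invoke Lemma \ref{Galois preserved}: since $\tilde{L}$ and $K'$ are linearly disjoint over $K$, restriction induces an isomorphism ${\rm Gal}(\tilde{L}K'/K') \xrightarrow{\sim} G$ that carries ${\rm Gal}(\tilde{L}K'/LK')$ onto $H$. Consequently the normal closure $H^G$ is the \emph{same} subgroup whether computed upstairs or downstairs, and the corollary following Lemma \ref{isomorphism} identifies $\tilde{L}K'$ as the Galois closure of $LK'/K'$. Thus the unique intermediate field $F'$ for $LK'/K'$ defined by Thm.\ \ref{unique ascending chain} is $F' = (\tilde{L}K')^{H^G}$, and $t_{K'}(LK') = [F':K']$.

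Second, I would identify $F'$ with $FK'$. This is exactly the content of Theorem \ref{base change unique chain}(2) applied to (the singleton-or-longer) unique ascending chain for $L/K$: the unique ascending chain for $LK'/K'$ is obtained by base changing by $K'$, so its first step is $FK'$. (Equivalently, $F \subset F'$ and $K' \subset F'$ force $FK' \subset F'$; and comparing degrees using $[F':K'] = [G : H^G] = [F:K]$ gives equality.)

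Finally, the linear disjointness of $F$ and $K'$ over $K$ (which follows from $F \subset \tilde{L}$ and Remark \ref{morandi}, since $K'/K$ is Galois and $F \cap K' \subset \tilde{L}\cap K' = K$) gives $[FK':K']=[F:K]$, and we conclude
\[ t_{K'}(LK') = [F':K'] = [FK':K'] = [F:K] = t_K(L). \]
There is no substantive obstacle here; the proof is a bookkeeping exercise whose only subtle point is confirming that $H^G$ is preserved under the restriction isomorphism of Lem.\ \ref{Galois preserved}, which is automatic because $H$ and $G$ are preserved.
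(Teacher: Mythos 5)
Your proof is correct and follows essentially the same route as the paper, which simply derives the proposition from Lemma \ref{Galois preserved}: the restriction isomorphism carries $G$ to $G_1$ and $H$ to $H_1$, hence $H^G$ to $H_1^{G_1}$, giving $t_{K'}(LK')=[G_1:H_1^{G_1}]=[G:H^G]=t_K(L)$. Your extra bookkeeping identifying $F'=FK'$ via Thm.\ \ref{base change unique chain}(2) and the linear disjointness of $F$ and $K'$ is fine but not needed beyond that group-theoretic identification.
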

\smallskip

By proof of Thm.\ \ref{strong unique ascending} (1) and Thm.\ \ref{unique ascending chain} (2), we have the following analogue of the Cluster Magnification Theorem Thm.\ \ref{Vanchinathan}.
\smallskip

\begin{thm}
\label{t strong}
    (Ascending Index Magnification Theorem) 
    {\it Let $M/K$ be obtained by strong cluster magnification with magnification factor $d$. Then 
     $$t_K(M)=d \ t_K(L)\ \text{and}\ u_K(M) = u_K(L).$$}
\end{thm}

\smallskip

The following theorem is an analogue of Thm.\ \ref{n,r}.\smallskip

\begin{thm}
\label{t,r}
   (Inverse Ascending Index Problem for Number Fields)
   {\it  Let $K$ be a number field. Let $n>2$ and $t|n$. Then there exists an extension $L/K$ of degree $n$ with ascending index $t_K(L)=t$. }
\end{thm}

\begin{proof}

Suppose $t=1$. By Lemmas \ref{sn} and \ref{perm} and Thm.\ \ref{nPk} and Thm. \ref{unique chain Lk} we have $L=L_1$ which satisfies $t_K(L)=1$.  \smallskip

Now suppose $t=n$. By Thm.\ \ref{n,r} for $r=n$, there exists an $L/K$ of degree $n$ with $r_K(L)=n$. For that $L/K$ we have $t_K(L)=n$.\smallskip

Now suppose $1<t<n$. Hence $1< n/t < n$. By Thm.
 \ref{n,r} for $r=n/t$ and Thm.\ \ref{Perlis unique}, there exists an $L/K$ 
 of degree $n$ with $r_K(L)=n/t$ and $t_K(L)=t$. \end{proof}

\begin{rem}
    Since we have Thm.\ \ref{t strong}, we could have approached Thm.\ \ref{t,r} in a similar way as 
    Thm.\ \ref{n,r} was approached in \cite{krithika2023root} using Thm. \ref{Vanchinathan}. 
    This approach would have left us with some cases not covered as in Thm.\ 2 in \cite{krithika2023root}.
\end{rem}

The following theorem is an analogue of Thm.\ 2 in \cite{krithika2023root}.

\begin{thm}
    \hfill
{\it 
\begin{enumerate}
    \item Let $K$ be a number field. For any integers $u\geq 3, t\geq 1 $ there exists $L/K$ of degree $ut$ and $t_K(L)=t$.

    \item Let $K=\mathbb{Q}$. For $t$ an even number, there exists $L/K$ of degree $2t$ and $t_K(L)=t$.
    
\end{enumerate}  
}
\end{thm}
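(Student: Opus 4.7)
The plan is to emulate the proof of Thm.\ 2 in \cite{krithika2023root}, as the preceding remark suggests: choose a suitable base extension $L_0$ whose ascending index is already known, and then apply Thm.\ \ref{t strong} to magnify up to the desired degree and ascending index. The constraint that $t$ is even in part (2) will emerge naturally because, for $u=2$, the smallest usable base extension has to be of degree $4$.

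For part (1), Lemmas \ref{sn} and \ref{perm} furnish an irreducible polynomial $f$ over $K$ of degree $u$ with Galois group $\mathfrak{S}_u$; let $L_1=K(\alpha)$ for a root $\alpha$ of $f$. Then $[L_1:K]=u$ and, by the last assertion of Thm.\ \ref{unique chain Lk}, $t_K(L_1)=1$. If $t=1$, take $L=L_1$. If $t\geq 2$, the solvability of $\mathbb{Z}/t\mathbb{Z}$, combined with Shafarevich's theorem and the corollary of Lemma \ref{conradbeautiful}, produces a Galois extension $F/K$ of degree $t$ linearly disjoint from $\tilde{L_1}$. Set $M=L_1 F$. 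Then $M/K$ is obtained by strong cluster magnification from $L_1/K$ through $F/K$ with factor $t$, so Thm.\ \ref{t strong} gives
\[
[M:K]=[L_1:K]\,[F:K]=ut, \qquad t_K(M)=t\cdot t_K(L_1)=t.
\]

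For part (2), write $t=2m$ with $m\geq 1$ and take $L_0=\mathbb{Q}(\sqrt[4]{2})$. As noted in the example following Prop.\ \ref{link unique}, the unique ascending chain for $L_0/\mathbb{Q}$ is $\mathbb{Q}\subsetneq \mathbb{Q}(\sqrt{2})\subsetneq L_0$, so $[L_0:\mathbb{Q}]=4$ and $t_{\mathbb{Q}}(L_0)=2$. If $m=1$, take $L=L_0$. If $m\geq 2$, again by the corollary of Lemma \ref{conradbeautiful} pick a Galois $F/\mathbb{Q}$ of degree $m$ (for instance with $\mathrm{Gal}(F/\mathbb{Q})\cong \mathbb{Z}/m\mathbb{Z}$) that is linearly disjoint from $\tilde{L_0}=\mathbb{Q}(\sqrt[4]{2},i)$, and set $M=L_0 F$. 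By Thm.\ \ref{t strong},
\[
[M:\mathbb{Q}]=4m=2t, \qquad t_{\mathbb{Q}}(M)=m\cdot t_{\mathbb{Q}}(L_0)=t.
\]

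The conceptual obstacle is that this magnification framework only produces ascending indices that are multiples of $t_K(L_0)$. Starting from a primitive extension with $t_K=1$ works well as long as $u\geq 3$, but for $u=2$ every degree-$2$ extension is Galois with $t_K=2$, so one is forced to look for a base extension of larger degree with $u_K(L_0)=2$. The natural choice $L_0=\mathbb{Q}(\sqrt[4]{2})$ has $t_K(L_0)=2$, and consequently $t_K(M)=2m$ is always even. This is exactly the analogue of the $n=2r$ with $r$ odd obstruction present in Thm.\ 2 of \cite{krithika2023root}; the uncovered odd cases for $u=2$ over $\mathbb{Q}$ are recovered not by magnification, but by the direct construction already given in Thm.\ \ref{t,r}.
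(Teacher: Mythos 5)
Your proof is correct and follows essentially the same route as the paper: start from a base extension with known ascending index ($L_1$ of degree $u$ with $t_K(L_1)=1$ for part (1), and $\mathbb{Q}(\sqrt[4]{2})$ with $t_{\mathbb{Q}}=2$ for part (2)) and magnify by a linearly disjoint Galois extension $F$ of degree $t$ (resp.\ $t/2$), applying Thm.\ \ref{t strong}. The only deviation is that the paper obtains $F$ from Lemma 2 of \cite{krithika2023root}, while you use Shafarevich together with the corollary of Lemma \ref{conradbeautiful}; this also works, provided one adds the short pigeonhole observation (as in the proof of Lemma \ref{conradbeautiful}) that in a large enough pairwise linearly disjoint family some member meets $\tilde{L_1}$ only in $K$.
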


\begin{proof}

(1) For $n=u$, we have $L_1/K$ with degree $u$ and satisfying $t_K(L_1)=1$ as in first case of Thm.\ \ref{t,r}. By Lemma 2 in \cite{krithika2023root}, there exists $F/K$ Galois of degree $t$ such that $\tilde{L_1}$ and $F$ are linearly disjoint over $K$. Hence by Thm.\ \ref{t strong}, $L=L_1F$ has degree $ut$ over $K$ and $t_K(L)=t$.\smallskip

(2) Consider $P=\mathbb{Q}(\sqrt[4]{2})$ which has degree $4$ over $K=\mathbb{Q}$ and $t_K(P)=2$. For $t$ an even number, by Lemma 2 in \cite{krithika2023root}, there exists $F/K$ Galois of degree $t/2$ such that $\tilde{P}$ and $F$ are linearly disjoint over $K$. Hence by Thm.\ \ref{t strong}, $L=PF$ has degree $2t$ over $K$ and $t_K(L)=t$. \end{proof}

\medskip

\noindent {\it Acknowledgements:} Both the Authors would like to thank Prof. P Vanchinathan for introducing his beautiful paper \cite{krithika2023root} to us by email.  Shubham Jaiswal would like to acknowledge support of IISER Pune Institute Scholarship during this work. \medskip


\begin{thebibliography}{1000000}

\bibitem{bhagwat2024right}
{Bhagwat, Chandrasheel and Jaiswal, Shubham}, 
{\it Right Splitting, Galois Correspondence, Galois Representations and Inverse Galois Problem.} 
 Accepted for publication in Journal of Ramanujan Mathematical Society (2025),
 Arxiv link: https://arxiv.org/abs/2403.14316.

\bibitem{conrad2023galois}
{Conrad, Keith}, 
{\it The Galois correspondence at work.} \\
https://kconrad.math.uconn.edu/blurbs/galoistheory/galoiscorrthms.pdf, 2023.



\bibitem{jacobsonbasicalgebra2}
{Jacobson, Eliot T and V{\'e}lez, William Y}, 
{\it The {Galois} group of a radical extension of the rationals.} 
Manuscripta Mathematica, Vol. 67, 271--284, (1990).

\bibitem{jacobson1990galois}
{Jacobson, Nathan}, 
{\it Basic Algebra II}.
Dover publications, Inc. second edition, (1989).

\bibitem{krithika2023root}
{Krithika, M and Vanchinathan, P},
{\it An Elementary Problem in {Galois} Theory about
the Roots of Irreducible Polynomials.} 
Proc. Indian Acad. Sci. (Math. Sci.) (2024) 134:28.

\bibitem{morandi2012field}
{Morandi, Patrick}, 
{\it Field and Galois theory.} 
Springer Science \& Business Media, 167 (2012).

\bibitem{perlisroots}
{Perlis, Alexander R}, 
{\it Roots appear in quanta: exercise solutions.} \\
https://www.math.lsu.edu/~aperlis/publications/rootsinquanta/
(2003).




\bibitem{perlis2004roots}
{Perlis, Alexander R}, 
{\it Roots appear in quanta.} 
The American Mathematical Monthly, Vol. 111(1), 61--63 (2004).








\bibitem{Shafarevich1989}
{Shafarevich, Igor}, 
{\it Factors of decreasing central series.} 
Mat. Zametki, 45, no.3, 128 (1989)



\bibitem{volklein1996groups}
{V{\"o}lklein, Helmut}, 
{\it Groups as Galois groups: an introduction.} 
Cambridge University Press, 53 (1996)







  


\end{thebibliography}

\end{document}